\numberwithin{equation}{section}
\newtheorem{theorem}{Theorem}[section]
\newtheorem{lemma}[theorem]{Lemma}
\newtheorem{remark}[theorem]{Remark}
\def \fs {\mathbf{s}}
\def \ft {\mathbf{t}}
\def \E{{\mathcal E}}
\def \inv{^{-1}}
\def  \supp{\mbox{supp}}
\def \fc {\mathfrak{c}}
\def \half{\frac{1}{2}}
\def \p{\partial}
\def  \wkr {W^{k,2,\alpha}_{r,u_{(r)}} }
\def  \cwk {\mathcal{W}^{k,2,\alpha}_{u} }
\def  \wka {W^{k,2,\alpha}_{u} }
\def  \lka {L_{u}^{k-1,2,\alpha} }
\def \lkar {L^{k-1,2,\alpha}_{r,u_{(r)}} }
\def \ker {\mbox{ker}}
\def \E{{\mathcal E}}
\def \v {\vskip 0.1in}
\def \n {\noindent}
\def \im {\mbox{im}}
\def \LP {P_{\mathbf r}}
\def \LH {H_{\mathbf r}}
\begin{document}

  \begin{center}
 {\LARGE \bf A Finite Rank Bundle over $J$-Holomorphic map Moduli Spaces}
\v
  {\large An-Min Li and Li Sheng}\footnote{partially  supported by a NSFC grant}
   \footnote{anminliscu@126.com, lshengscu@gmail.com}

{Department of Mathematics, Sichuan University
        Chengdu, PRC}
        \end{center}

\v\v
\begin{abstract}
We study a finite rank bundle $\mathbf{F}$ over a neighborhood of $J$-Holomorphic map Moduli Spaces, prove the exponential decay of the derivative of the gluing maps for $\mathbf{F}$ with respect to the gluing parameter.
\end{abstract}

\section{\bf Introduction and Preliminary}\label{introduction}

In \cite{S}, \cite{R3} and \cite{LR} the authors introduced a finite rank bundle $\mathbf{F}$ over a neighborhood of $J$-Holomorphic map Moduli Spaces. This bundle plays an important role in the study of the Gromov-Witten theory and the relative Gromov-Witten theory. In this paper we study some local analysis properties of this bundle.
\v
Let $(M,\omega,J)$ be a closed $C^{\infty}$ symplectic manifold of dimension $2m$ with $\omega$-tame almost complex structure $J$, let $(\Sigma,j,\mathbf{y})$ be a smooth Riemann surface of genus $g$ with $n$ marked points with $n>2-2g$.
We fix a local coordinate system $\psi: U\rightarrow \mathbf{A}$ for the Teichm\"uller space $\mathbf{T}_{g,n}$,
where $U\subset \mathbf{T}_{g,n}$ is a open set. Let $a_o=(j_o, \mathbf{y}_o)\in \mathbf{A}$, $u: \Sigma\to M$ be a $(j_o,J)$-holomorphic map. Then $\mathbf{F}$ can be viewed locally as a bundle over $\mathbf{A}\times W^{k,2}(\Sigma, u^*TM)$, denoted by $\widetilde{\mathbf{F}}$. In \S\ref{smoothness} we study the smoothness of $\widetilde{\mathbf{F}}$.
\v
In \S\ref{pregluing}
and \S\ref{diff gluing parameters} we study the gluing theory for $\widetilde{\mathbf{F}}$. Let $(\Sigma, j,{\bf y}, q)$ be a marked nodal Riemann surface with one nodal point $q$. We write $\Sigma=\Sigma_{1}\wedge\Sigma_{2}$. Let $u=(u_1,u_2)$, where $u_i:\Sigma_i\to M$ is a $(j_{oi},J)$-holomorphic map. We glue $\Sigma$ and $u$ at $q$ with gluing parameter $(r,\tau):=(r)$ to get $\Sigma_{(r)}$ and pregluing map $u_{(r)}:\Sigma_{(r)}\to M$. We have a gluing map from $\widetilde{\mathbf{F}}\mid_u$ to $\widetilde{\mathbf{F}}\mid_{u_{(r)}}$. We prove the exponential decay of the derivatives of the gluing maps with respect to the gluing parameter.

\subsection{\bf Metrics on $\Sigma$}\label{metric on surfaces} Let $(\Sigma,j,\mathbf{y})$ be a smooth Riemann surface of genus $g$ with $n$ marked points. In this paper we assume that $n>2-2g$, and $(g,n)\ne (1,1), (2,0)$. It is well-known that there is a unique complete hyperboloc metric $\mathbf{g}_0$ in $\Sigma\setminus \{{\bf y}\}$ of constant curvature $-1$ of finite volume, in the given conformal class $j$ ( see \cite{Wolp-1}).
Let $\mathbb H=\{\zeta=\lambda+\sqrt{-1}\mu|\mu>0\}$ be the half upper plane with the Poincare metric
$$
\mathbf{g}_0(\zeta)=\frac{1}{(Im(\zeta))^2}d\zeta d\bar\zeta.
$$
 Let
$$
\mathbb{D}=\frac{ \{\zeta \in \mathbb H|  Im (\zeta) \geq 1\}}{\zeta \sim \zeta + 1}
$$
be a cylinder, and $\mathbf{g}_0$ induces a metric on $\mathbb{D}$, which is still denoted  by $\mathbf{g}_0$. Let $z=e^{2\pi i\zeta}$, through which we identify $\mathbb{D}$ with $D(e^{-2\pi}):=\{z||z|< e^{-2\pi}\}$.
An important result is that for any punctured point $y_i$ there exists a
neighborhood $O_i$ of $y_i$ in $\Sigma$ such that
$$
(O_i\setminus \{y_i\},\mathbf{g}_0)\cong (D(e^{-2\pi})\setminus \{0\},\mathbf{g}_0),
$$
moreover, all $O_i$'s are disjoint with each other. Then we can view $D_{y_i}(e^{-2\pi})$ as a neighborhood of $y_i$ in $\Sigma$ and $z$ is a local complex coordinate on $D_{y_i}(e^{-2\pi})$ with $z(y_i)=0$. For any $c>0$ denote $$\mathbf{D}(c)=\bigcup D_{y_i}(c),\;\;\;\Sigma(c)=\Sigma\setminus \mathbf{D}(c).$$ Let $\mathbf{g}'=dzd\bar{z}$ be the standard Euclidean metric on each $D_{y_i}(e^{-2\pi})$. We fix a smooth cut-off function $\chi(|z|)$ to glue $\mathbf{g}_0$ and $\mathbf{g}'$, we get a smooth metric $\mathbf{g}$  in the given conformal class $j$ on $\Sigma$ such that
\[
\mathbf{g}=\left\{
\begin{array}{ll}
\mathbf{g}_0 \;\;\;\;\; on \;\;\Sigma\setminus \mathbf{D}(e^{-2\pi}),    \\  \\
\mathbf{g}'\;\;\;on\; \mathbf{D}(\frac{1}{2}e^{-2\pi})\;\;.
\end{array}
\right.
\]
\v
\v
 Let $\mathbf{g}^{c}=ds^2+d\theta^2$ be the  cylinder metric on each $D_{y_i}^{*}(e^{-2\pi})$, where $z=e^{s+2\pi\sqrt{-1}\theta}$.
We also define another metric $\mathbf{g}^\diamond$ on $\Sigma$ as above by glue $\mathbf{g}_0$ and $\mathbf{g}^c$, such that
\[
\mathbf{g}^{\diamond}=\left\{
\begin{array}{ll}
\mathbf{g}_0 \;\;\;\;\; on \;\;\Sigma\setminus \mathbf{D}(e^{-2\pi}),    \\  \\
\mathbf{g}^{c}\;\;\;on\; \mathbf{D}(\frac{1}{2}e^{-2\pi})\;\;.
\end{array}
\right.
\]

\v
The metric $\mathbf{g}$ (resp. $\mathbf{g}^{\diamond}$) can be generalized to marked nodal surfaces in a natural way. Let $(\Sigma, j,\mathbf{y})$ be a
marked nodal surfaces with $\mathfrak{e}$ nodal points $\mathbf{p}=(p_{1},\cdots,p_{\mathfrak{e}})$. Let $\sigma:\tilde{\Sigma}=\sum_{\nu=1}^{\mathfrak{r}}\Sigma_{\nu}\to \Sigma$ be the normalization. For every node $p_i$ we have a pair $\{\mathbf{a}_i, \mathbf{b}_i\}$. We view $\mathbf{a}_i$, $\mathbf{b}_i$ as marked points on $\tilde{\Sigma}$ and define the metric $\mathbf{g}_{\nu}$ (resp. $\mathbf{g}^{\diamond}_{\nu}$) for each $\Sigma_{\nu}$. Then we define
$$\mathbf{g}:=\bigoplus_{1}^{\nu} \mathbf{g}_{\nu},\;\;\;\;\;\mathbf{g}^{\diamond}:=\bigoplus_{1}^{\nu} \mathbf{g}^{\diamond}_{\nu}.$$

\v\v
\subsection{Teichm\"uller space}\label{sub_sect_Teich}

Denote by $\mathcal{J}(\Sigma)\subset End(T\Sigma)$ the manifold of all $C^{\infty}$ complex structures on $\Sigma$. Denote by $Diff^+(\Sigma)$ the group of orientation preserving $C^{\infty}$ diffeomorphisms of $\Sigma$, by $Diff^+_{0}(\Sigma)$ the identity component of $Diff^+(\Sigma).$ $Diff^+(\Sigma)$ acts on $\mathcal{J}(\Sigma)\times (\Sigma^{n} \setminus \Delta)$ by

$$\phi(j,\mathbf{y})=\left((d\phi_x)^{-1}J_{\phi(x)}d\phi_x, \varphi^{-1}\mathbf{y}\right)$$
for all $\phi\in Diff^+(\Sigma)$, $x\in \Sigma$, where $\Delta\subset \Sigma^{n}$ denotes the fat diagonal.
Put
$$\mathbf{P}:=\mathcal J(\Sigma)\times (\Sigma^{n} \setminus \Delta).$$
The orbit spaces are
\begin{align*}
\mathcal{M}_{g,n}=\left(\mathcal J(\Sigma)\times (\Sigma^{n} \setminus \Delta)\right)/ Diff^+(\Sigma),\;\;\;\mathbf{T}_{g,n}=\left(\mathcal J(\Sigma)\times (\Sigma^{n} \setminus \Delta)\right)/ Diff^+_{0}(\Sigma).
\end{align*}
$\mathcal{M}_{g,n}$ is called the Deligne-Mumford space, $\mathbf{T}_{g,n}$ is called the Teichm\"uller space.

\v

\v
Consider the principal fiber bundle
$
Diff^+_0(\Sigma) \to \mathbf{P}   \to \mathbf{T}_{g,n}
$
and the associated fiber bundle
$$
\pi_{\mathbf{T}}:   \mathcal{Q}:= \mathbf{P}\times_{Diff^+_0(\Sigma)}\Sigma\to \mathbf{T}_{g,n}.
$$
The following result is well-known ( cf \cite{RS} ):
\begin{lemma}\label{slice} Suppose that $n+ 2g \geq 3$. Then for any $\gamma_o=[(j_o,{\bf y}_o)]\in \mathbf{T}_{g,n}$, and any $(j_o,{\bf y}_o)\in \mathbf{P}$ with $\pi_{\mathbf{T}}(j_o,{\bf y}_o)=\gamma_o$ there is an open neighborhood $\mathbf{A}$ of zero in $\mathbb{C}^{3g-3+n}$ and a local holomorphic slice
$\iota=(\iota_{0},\cdots,\iota_{n}):\mathbf{A}\to \mathbf{P} $ such that
\begin{equation}\label{slice-1}
\iota_0(o)=j_o,\qquad \iota_i(o)=y_{io},\qquad i=1,\dots,n,
\end{equation}
and the map
$$
\mathbf{A}\times Diff_{0}(\Sigma) \to \mathbf{P}:(a ,\phi)\mapsto (\phi^*\iota_{0}(a),\phi^{-1}(\iota_1(a)),
\cdots,\phi^{-1}(\iota_{n}(a))
$$
is a diffeomorphism onto a neighborhood of the orbit of $(j_o,{\bf y}_o)$.
\end{lemma}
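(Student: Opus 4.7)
The plan is to follow the classical Teichm\"uller slice construction. First I would linearize the $Diff_0^+(\Sigma)$-action on $\mathbf{P}=\mathcal J(\Sigma)\times (\Sigma^n\setminus\Delta)$ at $(j_o,{\bf y}_o)$: a vector field $X\in \Gamma(T\Sigma)$ infinitesimally produces $(L_X j_o,-X(y_{1o}),\dots,-X(y_{no}))$. Identifying $T_{j_o}\mathcal J(\Sigma)$ with $(0,1)$-forms valued in $T\Sigma$ via the anticommutation relation $\dot j\, j_o+j_o\,\dot j=0$, the first component is essentially $\bar\partial X$, so the kernel of this infinitesimal action is the space of holomorphic vector fields on $(\Sigma,j_o)$ vanishing at $y_{1o},\dots,y_{no}$. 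Under $n+2g\geq 3$ this kernel is zero, and standard deformation theory identifies the cokernel with $H^1(\Sigma,T\Sigma(-D))$, where $D=\sum y_{io}$, which by Riemann--Roch has complex dimension $3g-3+n$.

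Second, I would construct $\iota$ by choosing a complex-linear right inverse of the projection onto this cokernel. Pick a basis $\{\mu_1,\dots,\mu_{3g-3+n}\}$ of harmonic Beltrami differentials for the hyperbolic metric $\mathbf{g}_0$ whose classes span the cokernel, and set
\[
\iota_0(a)=(\mathrm{Id}+\mu(a))\,j_o\,(\mathrm{Id}+\mu(a))^{-1},\qquad \mu(a)=\sum_k a^k\mu_k,\qquad \iota_i(a)=y_{io},
\]
for small $a\in \mathbf{A}\subset\mathbb C^{3g-3+n}$. This $\iota$ is manifestly holomorphic in $a$ and satisfies \eqref{slice-1}. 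Transversality to the orbit at $(j_o,{\bf y}_o)$ is immediate: on the marked-point factor, bump-function vector fields produce arbitrary values in $\bigoplus_i T_{y_{io}}\Sigma$ from the orbit alone, while on the $\mathcal J$ factor the $\mu_k$ were chosen to complement the image of $\bar\partial$.

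Third, I would show that
\[
\Phi:\mathbf{A}\times Diff_0^+(\Sigma)\to \mathbf{P},\qquad (a,\phi)\mapsto \phi\cdot \iota(a),
\]
is a diffeomorphism onto a neighborhood of the orbit. Its differential at $(0,\mathrm{Id})$ is the sum of the slice direction $d\iota$ and the infinitesimal orbit map; these span $T_{(j_o,{\bf y}_o)}\mathbf{P}$ by the previous step and have trivial total kernel because the orbit map is injective under $n+2g\geq 3$. The inverse function theorem therefore yields a local diffeomorphism, extended along $\{0\}\times Diff_0^+(\Sigma)$ by right equivariance. Injectivity after shrinking $\mathbf{A}$ follows from freeness of the action combined with automorphism rigidity: if $\phi\cdot \iota(a_1)=\iota(a_2)$ then $\phi$ fixes every $y_{io}$ and intertwines $\iota_0(a_1)$ with $\iota_0(a_2)$, which forces $\phi=\mathrm{Id}$ and $a_1=a_2$ by continuity from the trivial-automorphism case $a=0$.

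The main obstacle is functional-analytic: both $\mathcal J(\Sigma)$ and $Diff_0^+(\Sigma)$ are infinite-dimensional, so the inverse function theorem must be applied in appropriate Banach completions (say $C^{k,\alpha}$ or $W^{k,2}$), and one must verify that the resulting slice and diffeomorphism descend to the $C^\infty$ category by elliptic regularity for harmonic Beltrami differentials and for the $\bar\partial$-equation. Since this setup is precisely what is done in \cite{RS}, I would invoke their functional-analytic framework directly; the content specific to our presentation is the holomorphicity of $\iota$ with respect to the Kodaira--Spencer complex structure on $\mathbf{T}_{g,n}$, which is manifest from the Kuranishi-type formula above.
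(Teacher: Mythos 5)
The paper offers no proof of this lemma; it is stated as well known with a citation to Robbin--Salamon \cite{RS}, and no argument appears in the text. Your sketch is the standard Teichm\"uller slice construction and is consistent with the approach taken in that reference: compute the infinitesimal $Diff^{+}_{0}(\Sigma)$-action, identify its kernel as $H^0(\Sigma,T\Sigma(-D))=0$ under $n+2g\geq 3$ and its cokernel as $H^1(\Sigma,T\Sigma(-D))$ of complex dimension $3g-3+n$, lift a basis by harmonic Beltrami differentials while holding the marked points fixed, and invoke the inverse function theorem in Sobolev (or H\"older) completions together with elliptic regularity to return to the $C^\infty$ category.

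Two places deserve sharper phrasing, though neither is a genuine gap. First, in the transversality step the $\mu_k$ complement the image of $\bar\partial$ restricted to $\Gamma(T\Sigma(-D))$, not to all of $\Gamma(T\Sigma)$; the spanning argument is then two-step (first adjust the marked-point components by bump-function vector fields, then absorb the resulting $(0,1)$-correction using $\mathrm{span}\{\mu_k\}\oplus\bar\partial\Gamma(T\Sigma(-D))=\Omega^{0,1}(T\Sigma)$), and triviality of the intersection of the slice tangent with the orbit tangent again reduces to $H^0(\Sigma,T\Sigma(-D))=0$. Second, holomorphicity of the slice is not quite ``manifest'': your formula for $\iota_0(a)$ is algebraic in $a$, hence holomorphic into $\mathcal J(\Sigma)$ with its natural almost complex structure, but one must still check compatibility with the complex structure on $\mathbf{T}_{g,n}$; that identification (Kodaira--Spencer / Ahlfors--Bers) is precisely the technical content that \cite{RS} carries out carefully, and your closing paragraph is right to defer it there.
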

\v\n

From the local slice we have a local coordinate chart on $U$ and a local trivialization on $\pi_{\mathbf{T}}^{-1}(U)$:
\begin{equation}\label{local coordinates}
\psi: U\rightarrow \mathbf{A},\;\;\;\Psi:\pi_{\mathbf{T}}^{-1}(U)\rightarrow \mathbf{A}\times \Sigma,\end{equation}
 where $U\subset \mathbf{T}_{g,n}$ is a open set. We call $(\psi, \Psi)$ in \eqref{local coordinates} a local coordinate system for $\mathcal{Q}$. Suppose that we have two local coordinate systems
\begin{equation}\label{local coordinates-1}
(\psi, \Psi): (O, \pi_{\mathbf{T}}^{-1}(O))\rightarrow (\mathbf{A}, \mathbf{A}\times \Sigma),\end{equation}
\begin{equation}\label{local coordinates-2}
(\psi', \Psi'): (O', \pi_{\mathbf{T}}^{-1}(O'))\rightarrow (\mathbf{A}', \mathbf{A}'\times \Sigma).\end{equation}
Suppose that $O\bigcap O'\neq \emptyset.$ Let $W$ be a open set with $W\subset O\bigcap O'$.
Denote $V=\psi(W)$ and $V'=\psi'(W)$. Then ( see \cite{RS})
\begin{lemma}\label{local coordinates-3}
$\psi'\circ \psi^{-1}|_{V}:V\to V'$ and $\Psi'\circ \Psi^{-1}|_{V}: {V}\times \Sigma\to {V'}\times \Sigma$ are holomorphic.
\end{lemma}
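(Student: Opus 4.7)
The plan is to express the transition maps explicitly via Lemma~\ref{slice}, deduce smoothness from the implicit function theorem, and then obtain holomorphicity from the holomorphicity of the slices $\iota$ and $\iota'$ together with the standard complex structure on Teichm\"uller space. Fix $a_0\in V\subset\mathbf{A}$ with image $a_0'\in V'\subset\mathbf{A}'$. Applying Lemma~\ref{slice} to the slice $\iota'$, every element of $\mathbf{P}$ sufficiently close to the $Diff^+_0(\Sigma)$-orbit of $\iota'(a_0')$ has a unique representation $\phi\cdot\iota'(b')$ with $(b',\phi)\in\mathbf{A}'\times Diff^+_0(\Sigma)$. For $a$ near $a_0$, applying this to $\iota(a)$ yields unique smooth assignments $a\mapsto a'(a)\in\mathbf{A}'$ and $a\mapsto\phi_a\in Diff^+_0(\Sigma)$ satisfying
\begin{equation*}
\iota_0(a)=(d\phi_a)^{-1}\,\iota'_0(a'(a))\,d\phi_a,\qquad \phi_a(\iota_i(a))=\iota'_i(a'(a))\;\;(i=1,\ldots,n).
\end{equation*}
Unwinding the definitions of $\psi,\psi'$ (as the inverses of $a\mapsto\pi_{\mathbf{T}}(\iota(a))$ and $a'\mapsto\pi_{\mathbf{T}}(\iota'(a'))$) and of $\Psi,\Psi'$ (as the induced trivializations of $\mathcal{Q}$), one obtains
$\psi'\circ\psi^{-1}(a)=a'(a)$ and $\Psi'\circ\Psi^{-1}(a,x)=(a'(a),\phi_a(x))$.

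The holomorphicity of $\psi'\circ\psi^{-1}$ follows directly from the holomorphicity of the two slices. By construction, $\iota$ and $\iota'$ are holomorphic maps into $\mathbf{P}=\mathcal{J}(\Sigma)\times(\Sigma^{n}\setminus\Delta)$ equipped with its natural (formal) almost complex structure, and $\mathbf{T}_{g,n}$ carries a canonical complex structure with respect to which $\pi_{\mathbf{T}}$ is holomorphic (see \cite{RS}). Consequently $\psi^{-1}=\pi_{\mathbf{T}}\circ\iota$ and $\psi'^{-1}=\pi_{\mathbf{T}}\circ\iota'$ are holomorphic charts on $\mathbf{T}_{g,n}$, so their transition $\psi'\circ\psi^{-1}:V\to V'$ is biholomorphic between open subsets of $\mathbb{C}^{3g-3+n}$.

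For $\Psi'\circ\Psi^{-1}$, the first displayed equation states precisely that $\phi_a:(\Sigma,\iota_0(a))\to(\Sigma,\iota'_0(a'(a)))$ is a biholomorphism, so the transition is holomorphic along each fiber. The joint holomorphicity in $(a,x)$ reduces, via the holomorphic implicit function theorem applied to the slice equation $\iota(a)=\phi_a\cdot\iota'(a'(a))$, to the facts that (i) the $Diff^+_0(\Sigma)$-orbits are (formally) complex submanifolds of $\mathbf{P}$ since the infinitesimal action is compatible with the almost complex structure, and (ii) $\iota'$ is a holomorphic slice transverse to these orbits. The main technical obstacle lies in making this last step rigorous on the infinite-dimensional manifolds $\mathcal{J}(\Sigma)$ and $Diff^+_0(\Sigma)$; this is exactly the Teichm\"uller-theoretic content of \cite{RS}, which I would cite rather than reproduce in full.
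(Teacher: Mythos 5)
The paper gives no proof of this lemma at all; it simply writes ``Then (see \cite{RS})'' before the statement, deferring entirely to Robbin--Salamon. Your proposal therefore goes further than the paper itself: you correctly unwind Lemma~\ref{slice} to exhibit the transition maps explicitly as $\psi'\circ\psi^{-1}(a)=a'(a)$ and $\Psi'\circ\Psi^{-1}(a,x)=(a'(a),\phi_a(x))$, where $(a'(a),\phi_a)$ is the unique $\mathbf{A}'\times Diff^+_0(\Sigma)$ decomposition of $\iota(a)$ relative to the second slice, and you identify the correct reason for fiberwise holomorphicity (the slice equation $\iota_0(a)=\phi_a^{*}\iota'_0(a'(a))$ says exactly that $\phi_a$ intertwines the two complex structures). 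Your honest deferral of the joint holomorphicity in $(a,x)$ — the genuinely hard infinite-dimensional implicit-function-theorem content — to \cite{RS} matches the level of detail the paper itself chooses, so there is no gap relative to the paper. One small caveat: your sentence claiming $\psi^{-1}=\pi_{\mathbf{T}}\circ\iota$ is a ``holomorphic chart'' on $\mathbf{T}_{g,n}$ and therefore the transition is biholomorphic is essentially a restatement of the conclusion rather than an independent argument (the holomorphic atlas on $\mathbf{T}_{g,n}$ is, by definition, the one in which slice charts have holomorphic transitions). If you wanted a self-contained proof rather than a sketch, this is exactly where the Robbin--Salamon construction would need to be reproduced; as a pointer to the literature, matching the paper's own treatment, your write-up is fine.
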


\subsection{\bf $J$-holomorphic maps }\label{J-holo}

Let $(M,\omega,J)$ be a closed $C^{\infty}$ symplectic manifold of dimension $2m$ with $\omega$-tame almost complex structure $J$, where $\omega$ is a symplectic form. Then there is a Riemannian metric
\begin{equation}\label{definition_of_metrics}
G_J(v,w):=<v,w>_J:=\frac{1}{2}\left(\omega(v,Jw)+\omega(w,Jv)\right)
\end{equation}
for any $v, w\in TM$. Following \cite{MS} we choose the complex linear connection
 $$
 \widetilde {\nabla}_{X}Y=\nabla_{X}Y-\tfrac{1}{2}J\left(\nabla_{X}J\right)Y $$
 induced by the Levi-Civita connection $\nabla$ of the metric $G_{J}.$
\v
Let $(\Sigma, j, {\bf y})$ be a marked nodal Riemann surface of genus $g$ with $n$ marked points. Let $\sigma:\tilde{\Sigma}=\sum_{\nu=1}^{\iota}\Sigma_{\nu}\to \Sigma$ be the normalization.
Let $u:\Sigma\longrightarrow M$ be a smooth map. Here and later we say a map ( or section ) is smooth we mean that it is a continuous map such that, restricting to every $\Sigma_{\nu}$, it is smooth. The map $u$ is called a $(j,J)$-holomorphic map if, restricting to each $\Sigma_{\nu}$,  $du\circ j=J\circ du$. Alternatively
\begin{equation}\label{holo}
\bar\p_{j,J}(u):=\half\left(du + J(u) du\circ j\right)=0.
\end{equation}
Given $A\in H_2(M,\mathbb{Z})$. Let $u:\Sigma\to M$ be $(j_o,J)$-holomorphic map with $u([\Sigma])=A$.
Set $b_o=(\mathbf{s}_o, u)$, $\mathbf{s}_o=(j_o,\mathbf{y}_o)$. Let
$\mathbf{A}=\mathbf{A}_1\times \mathbf{A}_2\times...\times \mathbf{A}_\iota$ be a local coordinate system of complex structures on $\Sigma$ such that $\mathbf{s}_o\in \mathbf{A}$. Denote by $j_{\mathbf{s}}$ the complex structure corresponding to $\mathbf{s}=(j, \mathbf{y})\in \mathbf{A}$. Let $\alpha$ be a small
constant such that $0<\alpha <1$. For any
section $h\in C^{\infty}(\Sigma;u^{\ast}TM)$ and section $\eta \in
C^\infty(\Sigma, u^{*}TM\otimes \wedge^{0,1}_jT^{*}\Sigma)$ and given integer $k>4$ we define the norms
$\|h\|_{j_i,k,2,\alpha}$ and $\|\eta\|_{j_i,k-1,2,\alpha}$
( see \cite{LS-1} ).
Denote by $W^{k,2,\alpha}(\Sigma;u^{\ast}TM)$ and
$W^{k-1,2,\alpha}(\Sigma, u^{*}TM\otimes\wedge^{0,1}_jT^{*}\Sigma)$ the complete spaces with respect to the norms $\|h\|_{j_i,k,2,\alpha}$ and $\|\eta\|_{j_i,k-1,2,\alpha}$ respectively. We can also define  $\mathcal{W}^{k,2,\alpha}(\Sigma;u^{\ast}TM)$  as in \cite{LS-1}.
Let
 $$
\widetilde{\mathcal B}=\{u\in W^{k,2,\alpha}(\Sigma,M)\mid u_{*}([\Sigma])=A\}.
 $$
For fixed $\mathbf{s}_o$, restricting to each $\Sigma_{\nu}$, $\widetilde{\mathcal B}$ is an infinite dimensional Banach manifold.  Let $\delta>0$, $\rho>0$ be two small numbers. Denote
$$\widetilde{\mathbf{O}}_{b_{o}}(\delta,\rho):=\{(\mathbf{s},v)\in \mathbf{A}\times \widetilde{\mathcal{B}} \;|\; d_{\mathbf{A}}(\mathbf{s}_o,\mathbf{s})<\delta, \|h\|_{j_\mathbf{s},k,2,\alpha}<\rho \},
$$
where $v=\exp_{u}h$, $d_{\mathbf{A}}$ is the distance function induced by the Weil-Petersson metric on the Deligne-Mumford space $\overline{\mathcal{M}}_{g,n}$.

\v
\section{\bf A finite rank bundle and weighted norms}\label{finite rank orbi-bundle}

\subsection{\bf A finite rank bundle }\label{s_intro_2}

We slightly
deform $\omega$ to get a rational class $[\omega^*]$. By taking multiple, we can
assume that $[\omega^*]$ is an integral class on $M$. Therefore, it is the Chern class of a complex line bundle $L$ over $M$. Let $i$ be the complex structure on $L$. We choose a Hermition metric $G^L$ and the associate unitary connection $\nabla^{L}$ on $L$.
\v
Let $(\Sigma, j, {\bf y})$ be a marked nodal Riemann surface of genus $g$ with $n$ marked points.
Let $u:\Sigma\longrightarrow M$ be a $\mathcal{W}^{k,2,\alpha}$ map. We have a complex line bundle $u^*L$ over $\Sigma$ with complex structure $u^*i$ and unitary connection $u^*\nabla^{L}$. Put $b=(\mathbf{s},u)$, $\mathbf{s}=(j, {\bf y})$.
The unitary connection $u^{*}\nabla^{L}$
splits into $ u^{*}\nabla^{L}:=u^{*}\nabla^{L,(1,0)}\oplus u^{*}\nabla^{L,(0,1)}$.
We can define the spaces $\mathcal{W}^{k,2,\alpha}(\Sigma,u^{*}L)$ and
$W^{k-1,2,\alpha}(\Sigma,u^{*}L\otimes\wedge_{j}^{(0,1)}T^{\star}\Sigma)$
as in \cite{LS-1} (see also section \S\ref{s_intro_2}).

Denote
$$D^L:=u^{*}\nabla^{L,(0,1)}:\mathcal{W}^{k,2,\alpha}(\Sigma,u^{*}L)\to W^{k-1,2,\alpha}(\Sigma,u^{*}L\otimes\wedge_{j}^{(0,1)}T^{\star}\Sigma).$$
One can check that
$$
D^{L}(f\xi)=\bar{\p}_{j}f \otimes \xi +  f\cdot D^{L}\xi.
$$
$D^{L}$ determines a holomorphic structure on $u^*L$, for
which $D^{L}$ is an associated Cauchy-Riemann operator (see \cite{HLS,IS}).
Then $u^*L$ is a holomorphic line bundle.
\v
Let $\Sigma$ be a smooth Riemann surface. Let $\{V\}$ be a covering of $\Sigma$ such that each $V\subset \Sigma$ is a trivializing open set of $u^*L$. $D^{L}$ becomes in each $V$
\begin{equation}\label{nonvanishing solution}
D^{L}f=\frac{\p}{\p \bar{z}}f + a_{V}f.
\end{equation}
Consider the PDE
\begin{equation}\label{nonvanishing solution-1}
\frac{\p}{\p \bar{z}}f + a_{V}f=0.
\end{equation}
We can find a nonvanishing solution $e_{V}$ of \eqref{nonvanishing solution-1}. ( see \cite{HLS,IS}). Then $\{e_{V}\}$ define a holomorphic structure on $u^*L$ such that $D^{L}$ is $\bar{\p}_j$.
\v
Now let $\Sigma$ be nodal Riemann surface. For every smooth component $\Sigma_\nu$ we have a holomorphic structure on $u^*L$ over $\Sigma_\nu$. Suppose that $p$ is a node of $\Sigma_1$ and $\Sigma_2$. We choose nonvanishing solutions $e_{V_i}$ of \eqref{nonvanishing solution-1}, where $V_i\subset \Sigma_i$. Since \eqref{nonvanishing solution-1} is a linear equation, we can choose $e_{V_i}$ such that $e_{V_1}(p)=e_{V_2}(p)$. Then we have a holomorphic structure on $u^*L$ over $\Sigma$.
\v
Let $\lambda_{(\Sigma, j)}$ be
the dualizing sheaf of meromorphic 1-form with at worst simple pole at the nodal points and for each
nodal point p, say $\Sigma_1$ and $\Sigma_2$ intersects at p,
$$Res_p(\lambda_{(\Sigma_1, j_1)}) + Res_p(\lambda_{(\Sigma_2, j_2)})=0.$$
Let $\Pi:\overline{\mathscr{C}}_{g}\to \overline{\mathcal{M}}_{g}$ be the universal curve. Let $\lambda$ be the relative dualizing sheaf over $\overline{\mathscr{C}}_{g}$, the restriction of $\lambda$ to $(\Sigma, j)$  is $\lambda_{(\Sigma, j)}$.
\v
Set $\Lambda_{(\Sigma, j)}:=\lambda_{(\Sigma, j)}\left(\sum_{i=1}^{n} y_i\right)$.
$\Lambda\mid_{\mathscr{C}_{g,n}}$ is a line bundle over  $\mathscr{C}_{g,n}$. Let $(\psi, \Psi): (O, \pi_{\mathbf{T}}^{-1}(O))\rightarrow (\mathbf{A}, \mathbf{A}\times \Sigma)$ be a local coordinate systems, where $O\subset \mathbf{T}_{g,n}$ is an open set.
 $\Lambda$ induces a line bundle over  $ \mathbf{A}\times \Sigma,$ denoted by $\widetilde{\Lambda}$.
 Then
$\widetilde{\mathbf{L}}\mid_{b}:=\mathscr{P}^{*}\widetilde{\Lambda}\otimes u^*L $ is a holomorphic line bundle over $\Sigma$, where $\mathscr{P}$ denote the forgetful map.
  We have a Cauchy-Riemann operator $  \bar{\p}_b.$
Then $H^0(\Sigma, \widetilde{\mathbf{L}}\mid_{b})$ is the $ker \bar{\p}_b$. Here the $\bar{\p}$-operator depends on the complex structure $j$ on $\Sigma$ and the bundle $u^*L$, so we denote it by $\bar{\p}_b$.
\v
If $\Sigma_{\nu}$
is not a ghost component, there exist a constant $\hbar_{o}>0$ such that
$$\int_{u(\Sigma_{\nu})}\omega^*> \hbar_{o} .$$ Therefore, $c_1(u^*L)(\Sigma_{\nu})>0$. For ghost component $\Sigma_{\nu}$, $\lambda_{\Sigma_{\nu}}\left(\sum_{i=1}^{n} y_i\right)$ is positive. So
for any $b=(\mathbf{s},v)\in \widetilde{\mathbf{O}}_{b_{o}}(\delta,\rho)$
by taking the higher power of $\widetilde{\mathbf{L}}\mid_{b}$, if necessary, we can assume that
$\widetilde{\mathbf{L}}\mid_{b}$ is very ample. Hence, $H^1(\Sigma, \widetilde{\mathbf{L}}\mid_{b})= 0$. Therefore,
$H^0(\Sigma, \widetilde{\mathbf{L}}\mid_{b})$ is of constant rank ( independent of $b\in \widetilde{\mathbf{O}}_{b_{o}}(\delta,\rho)$). We have a finite rank bundle $\widetilde{\mathbf{F}}$ over $\widetilde{\mathbf{O}}_{b_{o}}(\delta,\rho)$,
whose fiber at $b=(j,{\bf y},v)\in\widetilde{\mathbf{O}}_{b_{o}}(\delta,\rho)$
is $H^0(\Sigma, \widetilde{\mathbf{L}}\mid_{b})$.
\v
\begin{remark}\label{finite bundle over moduli space} In \cite{S}, \cite{R3} and \cite{LR} the authors constructed a finite rank bundle $\widetilde{\mathbf{F}}$ over a neighborhood of $J$-Holomorphic map Moduli Spaces. In this paper we study the local analysis properties, so we only give the local construction here.
\end{remark}

\v
\subsection{\bf Weighted norms}\label{s_intro_2}
\v
Let $(V,z)$ be a local coordinate system on $\Sigma$ around a nodal point ( or a marked point) $q$ with  $z(q)=0$ . Let $b=(\mathbf{s},u)\in \widetilde{\mathbf{O}}_{b_o}(\delta_{o},\rho_{o})$ and $e$ be a local holomorphic section of $u^{*}L|_{V}$ with $\|e\|_{G^L}(q)\neq 0$ for $q\in V$.
Then for any $\phi\in   \widetilde{\mathbf F}|_{b}$ we can write
\begin{equation}\label{eqn_phi_loc_hol}
\phi|_{V} =f \left(\frac{dz}{z}\otimes e \right)^{p},\;\;\mbox{ where } f\in \mathcal{O}(V), \;p\in \mathbb{Z}.
\end{equation}
In terms of the holomorphic cylindrical coordinates $(s,t)$ defined by $z=e^{s+2\pi\sqrt{-1}t}$ we re-written \eqref{eqn_phi_loc_hol} as
$$
\phi(s,t)|_{V}=f(s,t) \left((ds+2\pi\sqrt{-1}dt)\otimes e \right)^{p},
$$
where $f(z)\in \mathcal O(V)$. It is easy to see that $|f(s,t)-f(-\infty, t)|$ uniformly exponentially converges to 0 with respect to $t\in S^1$ as
$|s|\to \infty$.
\v
The metrics $G^L$ and $\mathbf{g}^{\diamond}$ together induce a metric $\mathbb{G}$ on $\widetilde{\mathbf L}$. We define weighted norms for $C^{\infty}_{c}(\Sigma,\widetilde{\mathbf L}|_{b})$ and
$C^{\infty}_{c}(\Sigma, \widetilde{\mathbf L}|_{b}\otimes\wedge^{0,1}_{j}T^{*}\Sigma).$
Fix a positive function $W$ on $\Sigma$ which has order equal
to $e^{\alpha |s|} $ on each end of $\Sigma_i$, where $\alpha$ is a small
constant such that $0<\alpha  <1$. For any $\zeta\in C^{\infty}_{c}(\Sigma,\widetilde{\mathbf L}|_{b})$ and
any section $\eta \in
C^{\infty}_{c}(\Sigma, \widetilde{\mathbf L}|_{b}\otimes\wedge^{0,1}_{j}T^{*}\Sigma)$
we define the norms
\begin{equation}
\|\zeta\|_{j,k,2,\alpha}=
 \left(\int_{\Sigma}e^{2\alpha|s|} \sum_{i=0}^{k} |\nabla^i \zeta|^2
dvol_{\Sigma}\right)^{1/2},
\end{equation}
\begin{equation}
\|\eta\|_{j,k-1,2,\alpha}= \left(\int_{\Sigma}e^{2\alpha|s|}\sum_{i=0}^{k-1} |\nabla^i \eta|^2
dvol_{\Sigma}\right)^{1/2}.
\end{equation}
Here all norms and
covariant derivatives are taken with respect to the Hermition metric $\mathbb{G}$ on $\widetilde{\mathbf{L}}$ and
the metric $\mathbf{g}^\diamond$ on $(\Sigma, j, {\bf y})$, $dvol_{\Sigma}$ denotes the volume form with respect to $\mathbf{g}^\diamond$. Denote by $W^{k,2,\alpha}(\Sigma;\widetilde{\mathbf L}|_{b})$ and
$W^{k-1,2,\alpha}(\Sigma, \widetilde{\mathbf L}|_{b}\otimes \wedge^{0,1}_{j}T^{*}\Sigma)$ the complete spaces with respect to the norms respectively.
\v
We choose $R_0$ so large that  $u_{i}(\{|s_i|\geq \frac{r}{2}\})$ lie in $O_{u_{i}(q)}$ for any $r>R_0$. In this coordinate system we identify $T_xM$ with $T_{u_{i}(q)}M$ for all $x\in O_{u_{i}(q)}$. With respect to the base $\left(e\otimes\frac{dz}{z} \right)^{p}$ for $\widetilde{\mathbf L}|_{b}$ we have a local trivialization.
Any $\zeta_0\in \widetilde{\mathbf L}|_{b}(q)$
may be considered as a vector field in the coordinate neighborhood.
We fix a smooth cutoff function $\varrho$:
\[
\varrho(s)=\left\{
\begin{array}{ll}
1, &\mbox{ if }\ |s|\geq \bar{d} \\
0, &\mbox{ if }\ |s|\leq \frac{\bar{d}}{2}
\end{array}
\right.
\]
where $\bar{d}$ is a large positive number. Put
$$\hat{\zeta}_0=\varrho \zeta_0.$$
Then for $\bar{d}$ large enough $\hat{\zeta}_0$ is a section in $C^{\infty}(\Sigma; \widetilde{\mathbf L}|_{b_{o}})$
supported in the tube $\{(s,t)||s|\geq \frac{\bar{d}}{2}, t \in {S} ^1\}$.
Denote
$${\mathcal W}^{k,2,\alpha}(\Sigma;\widetilde{\mathbf L}|_{b})=\left\{\zeta+\hat{\zeta}_0 | \zeta \in
W^{k,2,\alpha}(\Sigma; \widetilde{\mathbf L}|_{b}),\zeta_0 \in \widetilde{\mathbf L}|_{b}(q)\right\}.$$
We define weighted Sobolev  norm  on ${\mathcal W}^{k,2,\alpha}$ by
$$\| \zeta+\hat{\zeta}_{0}\|_{\mathcal{W},j,k,2,\alpha}=
\|\zeta\|_{j,k,2,\alpha} + |\zeta_{0}| ,$$
 where $|\zeta_{0}|=[\mathbb{G}(\zeta_{0},\zeta_{0})_{u(q)}]^{\frac{1}{2}}$.
\v
Let $b=(\mathbf{s},u)$. We define a Cauchy-Riemann operator
$$D^{\widetilde{\mathbf L}}|_{b}: {\mathcal W}^{k,2,\alpha}(\Sigma,\widetilde{\mathbf L}|_{b})\to W^{k-1,2,\alpha}(\Sigma, \widetilde{\mathbf L}|_{b}\otimes\wedge^{0,1}_{j_\mathbf{s}}T^{*}\Sigma)\;\;\;by$$
\begin{equation}\label{eqn_D_L}
D^{\widetilde{\mathbf L}}|_{b}(f(\mathbf{k}\otimes e)^{p})=(\bar{\p}f)(\mathbf{k}\otimes e)^{p}+(p
 f)(\mathbf{k}\otimes D^{L}e)\otimes(\mathbf{k}\otimes e)^{p-1},
\end{equation}
where $\mathbf{k}$ is a local frame field of $\widetilde{\Lambda}$, $f(\mathbf{k}\otimes e)^{p}\in {\mathcal W}^{k,2,\alpha}(\Sigma, \widetilde{\mathbf L}|_{b}).$
\v
With respect to the holomorphic structure $\{e_{V}\}$ we have $D^{\widetilde{\mathbf L}}|_{b}=\bar{\p}_b$. The linearized operator of $D^{\widetilde{\mathbf L}}|_{b}$ is also $\bar{\p}_b$.
\v\v
\section{Smoothness of $\widetilde{\mathbf F}$ on top strata}\label{smoothness}
\v

Let $(\Sigma,j,\mathbf{y})$ be a smooth Riemann surface of genus $g$ with $n$ marked points.
Let $b_o=(a_o, u_o)=(j_o,\mathbf{y}_o,u_o)$, $b=(a,u)$, $u=\exp_{u_o}h$, $b\in \widetilde{\mathbf{O}}_{b_o}(\delta_{o},\rho_{o})$.
\v
\subsection{Smoothness of $\mathscr{F}(h,\xi)$}

First we recall a fact about the exponential map on a compact Riemannian manifold $M$ (see \cite{MS}, Page 362, Remark 10.5.5). There
are two smooth families of endomorphisms
$$
E_{i}(p,\xi) : T_{p}M\rightarrow T_{\exp _{p}\xi} M ,\;\;\; i=1,2,
$$
that are characterized by the following property.
Let $\gamma :\mathbb R\to M $ be any smooth path in $M$ and $v(t)\in T_{\gamma(t)}M$ be any smooth vector field along this path then the derivative of the path $t\to \exp_{\gamma(t)}(v(t))$ is given by the formula
$$
\frac{d}{dt} \exp_{\gamma}(v)=E_{1}(\gamma,v)\dot\gamma+E_{2}(\gamma, v)\tilde \nabla_{t}v,
$$
where $\dot \gamma =\frac{d\gamma}{dt}.$
We have
$$
E_{1}(p,0)=E_{2}(p,0)=Id:T_{p}M\to T_{p}M,\;\;\;\forall p\in M,
$$
and $E_{i}(p, \xi)$ are uniformly invertible for sufficiently small $\xi.$ Since $M$ is compact, there exists a constant $ \epsilon$ such that for any $p\in M$ and $\xi\in T_{p}M$ with $|\xi|_{T_{p}M}\leq \epsilon$, $E_{i}(p, \xi)$ are  uniformly invertible.
\v
Given $x\in M$ and $\zeta\in T_xM$ we define two linear maps
\begin{align*}E_x(\zeta): T_xM\to T_{\exp_x(\zeta)}M,\;\;\; \;\;\;\; \Psi_{x}(\zeta):T_{x}M\times T_{x}M \to L_{\exp_{x}(\zeta)}\\
E_x(\zeta)\zeta':=\frac{d}{dt}\exp_x(\zeta+t\zeta')\mid_{t=0},\;\;\;\;\;\;
\Psi_{x}(\zeta;\zeta',\eta)=\nabla^{L}_{t}(\Phi^{L}_{x,\exp_{x}(\zeta+t\zeta')} \eta)|_{t=0}.
\end{align*}
Choose a local coordinate system $x_1,...,x_{2m}$ on $M$, denote $\frac{\p}{\p x_i}=\p_{x_{i}}$. Let $\xi$ be a smooth section of the bundle $L$ over a neighborhood $U_{o}$ of $u_o(\Sigma)$. Let $u=\exp_{u_o}h$.
Let $\Phi^L_{u_o,u}$ be the parallel transport with  respect to the connection $\nabla^L$, along the
geodesics $s\rightarrow \exp_{u_o}(sh)$.
$\Phi^L_{u_o,u} $ induce two isomorphisms
$$\mathcal{W}^{k,2,\alpha}(\Sigma,u_o^*L)\to \mathcal{W}^{k,2,\alpha}(\Sigma,u^*L),\;\;\;\; \mathcal{W}^{k-1,2,\alpha}(\Sigma,u_o^*L\otimes\wedge_j^{0,1}T\Sigma)\to  \mathcal{W}^{k-1,2,\alpha}(\Sigma,u^*L\otimes\wedge_j^{0,1}T\Sigma),
$$
still denote them by $\Phi^L_{u_o,u}.$ Denote $u_{t}=\exp_{u_{o}}(h+th').$ We calculate  $(u_{t}^{*}\nabla^{L})_t\left.\left(\Phi^L_{u_o,u_{t}}u_{o}^{*}\xi\right)\right|_{t=0}$.
By definition, for any $p\in \Sigma$,
$$(u_{t}^{*}\nabla^{L})_t\left.\left(\Phi^L_{u_o,u_{t}}u_{o}^{*}\xi\right)(p)\right|_{t=0}
=\nabla^{L}_t\left.\left(\Phi^L_{u_o,u_{t}}\xi\right)\circ u_{t}\right|_{t=0}(p)=\Psi_{u_{o}}(h;h',\xi)(u(p)).$$
Since $L$ and $\nabla^{L}$ are smooth on $M$, $\Psi_{u_{o}}(h;h',\xi)=\nabla^{L}_{E_{u_{o}}(h)h'}(\Phi_{u_{o},u}\xi)$ and $\Psi_{u_{o}}(0;h',\xi)=0,$ there is a constant $C>0$ independent of $p$ such that
\begin{equation}\label{eqn_est_Psi}
|\Psi_{u_{o}}(h;h',\xi)|_{u(p)}\leq C|h(p)||h'(p)|(|\xi|+|\nabla \xi|)|_{u(p)},
\end{equation}
when $\|h\|_{k,2}\leq \epsilon.$ If no danger of confusion we denote $(u_{t}^{*}\nabla^{L})_{t}$ by $\nabla_{t}^{L}.$ Then we have
$$\|\nabla^{L}_t\left.\left(\Phi^L_{u_o,u_{t}}u_{o}^{*}\xi\right)\right|_{t=0}\|_{C^{0}}\leq C\|h\|_{C^{0}}\|h'\|_{C^{0}}\|\xi\|_{C^{1}(U_{o})}$$
for some constant $C>0$.  By the Sobolev embedding Theorem we have
\begin{equation}\label{eqn_est_Psi_k}
\left\|\nabla^{L}_t\left.\left(\Phi^L_{u_o,u_{t}}u_{o}^{*}\xi\right)\right|_{t=0}\right\|_{k,2}\leq C\|h\|_{k,2}\|h'\|_{k,2}
\end{equation}
where $C>0$ is a constant depending on $\|\xi\|_{C^{k+1}(U_{o})}$, the Sobolev constant and the metric of $M.$
Then the operator
$$\nabla^{L}_{E_{u}(h)\cdot}(\Phi^L_{u_o,u})\xi: W^{k,2}(\Sigma,u_o^*TM)\to \mathcal{W}^{k-1,2,\alpha}(\Sigma,u^*L)$$
is a bounded linear operator. For any $l\in Z^{+},$ denote $\ft=(t_{1},\cdots,t_{l}),$ $u_{\ft}=\exp_{u_{0}}(h+\sum_{i=1}^{l}t_{l}h_{l} )$
 and
$$
T^{l}(h;h_{1},\cdots,h_{l})\xi=\nabla^{L}_{t_{1}}\cdots\nabla^{L}_{t_{l}}\left.\left(\Phi^L_{u_o,u_{\ft}}u_{o}^{*}\xi\right)\right|_{\ft=\mathbf 0}.
$$
A direct calculation gives us
\begin{equation}\label{eqn_T_claim}
|T^{l}(h;h_{1},\cdots,h_{l})\xi|\leq C\Pi_{i=1}^{l}|h_{i}(p)|.
\end{equation}
 By the same way as above we can show that
$$T^{l}(h;\cdot\cdot\cdot) \cdot : W^{k,2}(\Sigma,u_o^*TM)\times \cdots \times  W^{k,2}(\Sigma,u_o^*TM)\times  \mathcal{W}^{k,2,\alpha}(\Sigma,u^*L)\to  \mathcal{W}^{k,2,\alpha}(\Sigma,u^*L)$$ is a bounded linear operator with respect to $h_{1},\cdots,h_{l}$.

\v
Now we calculate
$\nabla^{L}_t(D^L_{u_{t}}\Phi_{u_{o},u}u_{o}^{*}\xi)(p)|_{t=0}$. Let $\p_{ v}$ be a section of $T\Sigma$, denote $(u^*\nabla^L)_ {\p_{v}}=\nabla^{L}_{\p_{v}}.$  
Then, by the definition of curvature and $[\p_{t},\p_{ v}]=0,$ we have
$$\nabla^L_t\nabla^L_{\p_{ v}} \Phi_{u_{o},u_{t}}u_{o}^*\xi
\mid_{t=0}(p)=\nabla^L_{\p_{ v}}\nabla^L_t \Phi_{u_{o},u_{t}}u_{o}^*\xi
\mid_{t=0}(p) +R(\p_{t},\p_{ v}) \Phi_{u_{o},u_{t}}u_{o}^*\xi
\mid_{t=0}(p).
$$
By \eqref{eqn_est_Psi_k} we get
$$
\|\nabla^L_{\p_{ v}}\nabla^L_t \Phi_{u_{o},u_{t}}u_{o}^*\xi
\mid_{t=0}\|_{k-1,2}\leq C\|\nabla^L_t \Phi_{u_{o},u_{t}}u_{o}^*\xi
\mid_{t=0}\|_{k,2}\leq C \|h'\|_{k,2}.
$$
Since curvature $R$ is a tensor,   we have
$$\|\nabla^L_t\nabla^L_{\p_{ v}} \Phi_{u_{o},u_{t}}u_{o}^*\xi
 \mid_{t=0}(p)\|_{k-1,2}\leq C\|h'\|_{k,2}
 .$$
Let $u_{\ft}$ be as above. One can check that
$$\|\nabla^{L}_{t_{1}}\cdots\nabla^{L}_{t_{l}}D^L_{u(\ft)}\left.\left(\Phi^L_{u_o,u_{\ft}}u_{o}^{*}\xi\right)\right|_{\ft=\mathbf 0}
\|_{k-1,2}\leq C\Pi_{i=1}^{l}\|h_{l}\|_{k,2}.
$$
Define $$\widetilde T^{l}(h;\cdot\cdot\cdot)\nabla^L_{\p_{v}}\cdot : W^{k,2}(\Sigma,u_o^*TM)\times \cdots \times W^{k,2}(\Sigma,u_o^*TM)\times  \mathcal{W}^{k,2,\alpha}(\Sigma,u^*L)\to  \mathcal{W}^{k-1,2,\alpha}(\Sigma,u^*L \otimes\wedge^{0,1}_{j_o}T^{*}\Sigma)$$ by
$$
\widetilde{T}^{l}(h;h_{1},\cdots,h_{l})\nabla^L_{\p_{ v}}\xi(p)=\nabla^{L}_{t_{1}}\cdots\nabla^{L}_{t_{l}}\left.
\left(\nabla^L_{\p_{ v}}\Phi^L_{u_o,u_{\ft}}u_{o}^{*}\xi\right)(p)\right|_{\ft=\mathbf 0}.
$$
We can show that
$\widetilde{T}^{l}(h;\cdot\cdot\cdot)\nabla^L_{\p_{ v}}\cdot$ is a bounded linear operator with respect to $h_{1},\cdots,h_{l}$.
Define
$$
\mathscr {F}:  W^{k,2}(\Sigma,u_{o}^{\star}TM)
\times {\mathcal W}^{k,2,\alpha}(\Sigma,u_o^*L)\to W^{k-1,2,\alpha}(\Sigma,u_o^*L\otimes \wedge_{j_o}^{0,1}T^*\Sigma)
\;\;\;by$$
$$\mathscr{F}(h,\zeta)= (\Phi^L_{u_o,u})^{-1}D^L_{u}\Phi^L_{u_o,u}\zeta.$$
\begin{lemma}\label{smoothness-1 }
$\mathscr{F}(h,\zeta)$ is a smooth map.
\end{lemma}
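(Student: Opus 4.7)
The plan is to prove smoothness by computing all Gateaux derivatives of $\mathscr{F}$ in $h$ explicitly, showing they are bounded multilinear maps depending continuously on the base point, and then invoking the standard Banach-calculus criterion that this implies $\mathscr{F}$ is Frechet $C^\infty$. Smoothness in $\zeta$ is immediate, since $\mathscr{F}(h,\cdot)$ is linear, and its boundedness as a linear operator at fixed $h$ follows from the Leibniz identity $D^L(f\xi)=\bar\p f\otimes\xi+f\cdot D^L\xi$ combined with the $l=0$ cases of the estimates \eqref{eqn_est_Psi_k} and \eqref{eqn_T_claim} already proved in the paragraphs preceding the lemma.

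For the $h$-dependence, I would fix $h_1,\ldots,h_l\in W^{k,2}(\Sigma,u_o^*TM)$, put $\ft=(t_1,\ldots,t_l)$ and $u_\ft=\exp_{u_o}(h+\sum t_i h_i)$, and consider the $l$-fold formal derivative
$$
\nabla^L_{t_1}\cdots\nabla^L_{t_l}\left.\left[(\Phi^L_{u_o,u_\ft})^{-1}\,D^L_{u_\ft}\bigl(\Phi^L_{u_o,u_\ft}\zeta\bigr)\right]\right|_{\ft=\mathbf 0}.
$$
Applying the Leibniz rule to the three factors, and using the identity
$$
\nabla^L_t\bigl((\Phi^L_{u_o,u_t})^{-1}\bigr)=-(\Phi^L_{u_o,u_t})^{-1}\circ(\nabla^L_t\Phi^L_{u_o,u_t})\circ(\Phi^L_{u_o,u_t})^{-1}
$$
to recast derivatives of the inverse parallel transport in terms of derivatives of $\Phi^L_{u_o,u_t}$ itself, yields a finite sum of composites; each summand is built from parallel transports together with exactly one of the operators $T^m(h;h_{i_1},\ldots,h_{i_m})\zeta$ or $\widetilde T^m(h;h_{i_1},\ldots,h_{i_m})\nabla^L_{\p_v}\zeta$ for various index subsets and $m\le l$.

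By the estimates \eqref{eqn_est_Psi_k}, \eqref{eqn_T_claim}, and the analogous bounds already proved for $\widetilde T^m$, every summand is a bounded multilinear map
$$
\prod_{i=1}^l W^{k,2}(\Sigma,u_o^*TM)\,\times\,\mathcal{W}^{k,2,\alpha}(\Sigma,u_o^*L)\longrightarrow W^{k-1,2,\alpha}(\Sigma,u_o^*L\otimes\wedge^{0,1}_{j_o}T^*\Sigma)
$$
which depends continuously on $h$ in the operator norm. This identifies the $l$-th Gateaux derivative of $\mathscr{F}(\cdot,\zeta)$ at $h$ and gives it the required uniform control in a small ball. The standard theorem that continuous Gateaux derivatives of every order imply Frechet $C^\infty$ then yields smoothness of $\mathscr{F}$.

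The main obstacle is combinatorial bookkeeping: one must organise the Leibniz expansion so that every summand is matched against one of the $T^m$ or $\widetilde T^m$ operators, carefully treating the inverse parallel transport via the displayed identity, and checking continuity of the resulting multilinear maps in the base point $h$. No new analytic input is needed beyond the estimates already established in the excerpt.
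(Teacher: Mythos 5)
Your proposal is correct and follows the same route as the paper: the paper likewise reduces smoothness in $h$ to the uniform boundedness of the iterated-derivative operators $T^{l}$ and $\widetilde{T}^{l}$ established in the preceding paragraphs, and treats linearity in $\zeta$ as immediate. You usefully make explicit two points the paper glosses over — the Leibniz expansion of the triple composite $(\Phi^{L}_{u_o,u})^{-1}D^{L}_{u}\Phi^{L}_{u_o,u}$ including the derivative of the inverse parallel transport, and the ``continuous Gateaux derivatives of every order imply Fr\'echet $C^\infty$'' criterion — but these are elaborations of the same argument, not a different one.
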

\v\n
{\bf Proof.} Note that $L$ has finite rank and for any fixed $h$, $\mathscr{F}(h,\zeta)$ is a linear map. The key point is to prove the smoothness of $\mathscr{F}(h,\zeta)$ with respect to $h$. Since both
$T^{l}(h;\cdot\cdot\cdot)$ and $\widetilde{T}^{\ell}(h;\cdot\cdot\cdot)$ are bounded linear operators for any $l, \ell \in \mathbb Z^+$ , the smoothness of $\mathscr{F}(h,\zeta)$ follows.  $\Box$
\v\v
$\Phi^L_{u_o,u}$
induces two isomorphisms
$$
\mathbf{\Phi}^{L}_{j_a,u_o,u}:{\mathcal W}^{k,2,\alpha}(\Sigma,\widetilde{\mathbf L}|_{j_a,u_o}) \to {\mathcal W}^{k,2,\alpha}(\Sigma,\widetilde{\mathbf L}|_{j_a,u})\;\;and $$
$$\mathbf{\Phi}^{L}_{j_a,u_o,u}:
\mathcal{W}^{k-1,2,\alpha}(\Sigma, \widetilde{\mathbf L}|_{u_o}\otimes\wedge^{0,1}_{j_a}T^{*}\Sigma)\to \mathcal{W}^{k-1,2,\alpha}(\Sigma,\widetilde{\mathbf L}|_{u}\otimes\wedge^{0,1}_{j_a}T^{*}\Sigma)\;\;\;by
$$
$$\mathbf{\Phi}^L_{j_a,u_o,u}( \mathbf{k}_a\otimes\mathbf{e}_{u_o})^{p}=\left(\Phi^L_{u_o,u}(\mathbf{k}_a\otimes\mathbf{e}_{u_o})\right)^{p}.$$
Choose a connection $\nabla^{\Lambda}$ for the bundle $\Lambda$. Denote by $\tilde{\Lambda}$ and $\nabla^{\widetilde{\Lambda}}$ the expressions of $\Lambda$ and $\nabla^{\Lambda}$ in the local coordinate $\mathbf{s}\in \mathbf{A}$. $\widetilde{\Lambda}$ is a line bundle over $\mathbf{A}\times \Sigma$.  Let $\Phi^{\widetilde{\Lambda}}_{a_o,a}$ be the
parallel transport with  respect to the connection $\nabla^{\widetilde{\Lambda}}$, along the line $a_o+t(a-a_o)$.
We have two maps
$$\Psi^{\widetilde{\Lambda}}_{j_{o},j_a}: \widetilde{\mathbf L}|_{j_o,u}   \to \widetilde{\mathbf L}|_{j_a,u}  ,\;\;\;\;\Psi^{\widetilde{\Lambda}}_{j_a,j_{o}}:\widetilde{\mathbf L}|_{j_a,u}  \to \widetilde{\mathbf L}|_{j_o,u}. $$
For any $j_a\in \mathcal{J}(\Sigma)$ near $j_o$
we can write $j_a=(I + H)j_o(I + H)^{-1}$ where $H\in T_{j_o}\mathcal{J}(\Sigma)$.
We define two maps
$$\Psi^{\widetilde{\Lambda}}_{j_{o},j_a}: \widetilde{\mathbf L}|_{j_o,u}\otimes \wedge^{0,1}_{j_{o}}T^{*}\Sigma  \to \widetilde{\mathbf L}|_{j_a,u}\otimes \wedge^{0,1}_{j_a}T^{*}\Sigma\;\;\;and $$  $$\Psi^{\widetilde{\Lambda}}_{j_a,j_{o}}:\widetilde{\mathbf L}|_{j_a,u}\otimes \wedge^{0,1}_{j_a}T^{*}\Sigma  \to \widetilde{\mathbf L}|_{j_o,u}\otimes \wedge^{0,1}_{j_{o}}T^{*}\Sigma$$ by
\begin{align*}
 \Psi^{\widetilde{\Lambda}}_{j_{o},j_a}(\eta)= \frac{1}{2}(\Phi^{\widetilde{\Lambda}}_{a_o,a}\eta-\Phi^{\widetilde{\Lambda}}_{a_o,a}\eta \cdot j_{o} j_a),\;\;\;\; \Psi^{\widetilde{\Lambda}}_{j_a,j_{o}}(\varpi)= \frac{1}{2}(\Phi^{\widetilde{\Lambda}}_{a,a_o}\varpi-
 \Phi^{\widetilde{\Lambda}}_{a,a_o}\varpi \cdot j_aj_{o}).
\end{align*}
Note that $$u^*i(\mathbf{k}\otimes\mathbf{e}_{u})=\mathbf{k}\otimes u^*i(\mathbf{e}_{u}),$$ $$\Phi^{\widetilde{\Lambda}}_{a_o,a}(\mathbf{k}\otimes\mathbf{e}_{u})=
\Phi^{\widetilde{\Lambda}}_{a_o,a}(\mathbf{k})\otimes\mathbf{e}_{u}.$$
We have $u^*i\circ\Phi^{\widetilde{\Lambda}}_{a_o,a}=\Phi^{\widetilde{\Lambda}}_{a_o,a}u^*i$.
Since $u^*i\eta=-\eta j_{o}$ and $u^*i\varpi=-\varpi j_a$ for any
$$\eta\in \widetilde{\mathbf L}|_{j_o,u}\otimes \wedge^{0,1}_{j_{o}}T^{*}\Sigma,\;\;\;\varpi\in \widetilde{\mathbf L}|_{j_a,u}\otimes \wedge^{0,1}_{j_{a}}T^{*}\Sigma.$$
One can check that $u^{*}i\Psi^{\widetilde{\Lambda}}_{j_{o},j_a}(\eta) =-\Psi^{\widetilde{\Lambda}}_{j_{o},j_a}(\eta) j_a$ and $u^{*}i \Psi^{\widetilde{\Lambda}}_{j_a,j_{o}}(\varpi) =-\Psi^{\widetilde{\Lambda}}_{j_a,j_{o}}(\varpi) j_{o}.$ Then $\Psi^{\widetilde{\Lambda}}_{j_{o},j_a}$ and $\Psi^{\widetilde{\Lambda}}_{j_a,j_{o}}$ are well defined.
The proof of the following lemma is similar to the proof of Lemma 7.3 in \cite{LS-1}, we omit it here.
\begin{lemma}\label{isomorphism} Both
$\Psi^{\widetilde{\Lambda}}_{j_a,j_o}$ and $\Psi^{\widetilde{\Lambda}}_{j_o,j_a}$ are isomorphisms when $|H|$ small enough.
\end{lemma}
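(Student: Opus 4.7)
The plan is to realize both $\Psi^{\widetilde{\Lambda}}_{j_o,j_a}$ and $\Psi^{\widetilde{\Lambda}}_{j_a,j_o}$ as small perturbations of the identity when $|H|$ is small, and invert them via the Neumann series. The starting observation is that at $H=0$ one has $j_a=j_o$ and $\Phi^{\widetilde{\Lambda}}_{a_o,a_o}=\mathrm{Id}$, so the relation $j_o^2=-\mathrm{Id}$ gives $\Psi^{\widetilde{\Lambda}}_{j_o,j_o}(\eta)=\tfrac{1}{2}(\eta-\eta\, j_oj_o)=\eta$, and similarly in the opposite direction.

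I would then compute the composition $\Psi^{\widetilde{\Lambda}}_{j_a,j_o}\circ\Psi^{\widetilde{\Lambda}}_{j_o,j_a}$ directly. Parallel transport commutes with right-multiplication by endomorphisms of $T\Sigma$ (it acts only in the $\widetilde{\Lambda}$-factor), and $\Phi^{\widetilde{\Lambda}}_{a,a_o}\circ\Phi^{\widetilde{\Lambda}}_{a_o,a}=\mathrm{Id}$; combined with $j_a^2=-\mathrm{Id}$, expansion yields
\[
\bigl(\Psi^{\widetilde{\Lambda}}_{j_a,j_o}\circ\Psi^{\widetilde{\Lambda}}_{j_o,j_a}\bigr)(\eta)
=\tfrac{1}{4}\bigl(2\eta-\eta(j_oj_a+j_aj_o)\bigr)
=\eta-\tfrac{1}{4}\eta\,(j_oj_a+j_aj_o+2\,\mathrm{Id}).
\]
Since $H\in T_{j_o}\mathcal{J}(\Sigma)$ anticommutes with $j_o$, the identity $j_a=(I+H)j_o(I+H)^{-1}$ gives $j_a-j_o=O(|H|)$ pointwise, and hence $j_oj_a+j_aj_o+2\,\mathrm{Id}=j_o(j_a-j_o)+(j_a-j_o)j_o=O(|H|)$ as an endomorphism of $T^*\Sigma$. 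The composition is therefore $\mathrm{Id}-K$ with $\|K\|=O(|H|)$ on the weighted Sobolev space, and the reverse composition has the same form by the symmetric calculation.

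For $|H|$ small enough the Neumann series inverts $\mathrm{Id}-K$ on both sides; since the two compositions $\Psi^{\widetilde{\Lambda}}_{j_a,j_o}\circ\Psi^{\widetilde{\Lambda}}_{j_o,j_a}$ and $\Psi^{\widetilde{\Lambda}}_{j_o,j_a}\circ\Psi^{\widetilde{\Lambda}}_{j_a,j_o}$ are then isomorphisms, each individual map is forced to be bijective. The main technical step I expect to occupy the proof is the operator norm bound $\|K\|=O(|H|)$ on $\mathcal{W}^{k-1,2,\alpha}$: this reduces to controlling $H$ and its derivatives up to order $k-1$ against the exponential weight $e^{\alpha|s|}$ via the Sobolev multiplier inequality, which is routine in the holomorphic slice of Lemma \ref{slice}, where $H$ is a smooth endomorphism depending smoothly on $a$ and supported in a compact region of $\Sigma$. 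This parallels the argument in the proof of Lemma~7.3 of \cite{LS-1} to which the text refers.
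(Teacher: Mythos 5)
Your argument is sound, and it fills in a proof that the paper itself omits: the authors merely remark that the statement is ``similar to the proof of Lemma~7.3 in~\cite{LS-1}'' and skip the details, so there is no paper proof to compare against line-by-line. Your Neumann-series computation checks out: using $\Phi^{\widetilde\Lambda}_{a,a_o}\circ\Phi^{\widetilde\Lambda}_{a_o,a}=\mathrm{Id}$, the commutation of parallel transport (acting on the $\widetilde\Lambda$-factor) with right-multiplication on $T^*\Sigma$, and $j_a^2=-\mathrm{Id}$, one indeed gets $\Psi^{\widetilde{\Lambda}}_{j_a,j_o}\circ\Psi^{\widetilde{\Lambda}}_{j_o,j_a}=\mathrm{Id}-K$ with $K\eta=\tfrac14\,\eta\,(j_oj_a+j_aj_o+2\,\mathrm{Id})$, and the algebraic identity $j_oj_a+j_aj_o+2\,\mathrm{Id}=j_o(j_a-j_o)+(j_a-j_o)j_o$ shows $K$ is right-multiplication by an endomorphism of size $O(|H|)$, with the same form for the other composition.

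One small inaccuracy: you say $H$ is ``supported in a compact region of~$\Sigma$,'' and also that $H$ anticommutes with $j_o$. Neither is needed, and the first is likely not literally true (the slice $\iota_0(a)$ gives complex structures on all of the compact $\Sigma$, not variations cut off away from the marked points); the anticommutation is only a normalization and does not enter the estimate since $j_a-j_o=[H,j_o]+O(|H|^2)=O(|H|)$ for arbitrary $H$. What the Neumann-series step actually requires is that right-multiplication by the endomorphism $\tfrac14(j_oj_a+j_aj_o+2\,\mathrm{Id})$ is a bounded operator on $W^{k-1,2,\alpha}$ with operator norm controlled by $\|H\|_{C^{k-1}}$. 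This holds because $H$ is a smooth section of $\mathrm{End}(T\Sigma)$ on the compact surface, and components of such a $(1,1)$-tensor are conformally invariant; in cylindrical coordinates near a marked point they and their $(s,t)$-derivatives up to order $k-1$ remain bounded (in fact the derivatives decay as $s\to-\infty$), so multiplication by this tensor is bounded against the weight $e^{\alpha|s|}$. With that substitution your proof is complete and matches the intended argument.
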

\v\n
Set
\begin{equation}
P^{\widetilde{\mathbf{L}}}_{b,b_o}=\Psi^{\widetilde{\Lambda}}_{j_a,j_o}\circ {\Phi}^{\widetilde{\mathbf L}}_{j_a,u,u_{o}}.
\end{equation}
We consider the map
$$
\mathcal {F}: \mathbf{A}\times W^{k,2}(\Sigma,u_{o}^{\star}TM)
\times {\mathcal W}^{k,2,\alpha}(\Sigma,\widetilde{\mathbf L}|_{b_o})\to W^{k-1,2,\alpha}(\Sigma,\wedge^{0,1}T\Sigma\otimes \widetilde{\mathbf L}|_{b_o})\;\;\mbox{defined\;\;by }
$$ $$
\mathcal {F}(a,h,\xi)= P^{\widetilde{\mathbf{L}}}_{b,b_{o}}
\circ D^{\widetilde{\mathbf L}}_{b}\circ (P^{\widetilde{\mathbf{L}}}_{b,b_o})^{-1}\xi.
$$

\begin{lemma}\label{lem_smooth_F}
 The following hold.
\begin{description}
\item[(1).] $\frac{d}{d\lambda } \mathcal{F}(a,0, \lambda\xi) |_{\lambda=0}=D_{\xi}\mathcal {F}|_{u_{o}}(\xi)=\Psi^{\widetilde{\Lambda}}_{j_a,j_o}\circ D^{\widetilde{\mathbf L}|_{j_a,u_{o}}}\circ (\Psi^{\widetilde{\Lambda}}_{j_a,j_o})^{-1}(\xi).$
\item[(2).] $\mathcal {F}$ is smooth functional of $(a,h,\xi)$.
\end{description}
\end{lemma}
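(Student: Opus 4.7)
The plan is to prove (1) by direct evaluation at $h=0$ and to derive (2) from Lemma \ref{smoothness-1 } together with the smooth dependence of the auxiliary maps $\Psi^{\widetilde{\Lambda}}_{j_a,j_o}$ on $a$.

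For (1): when $h=0$ one has $u=u_o$, so $\Phi^L_{u_o,u}$ is the identity and $\mathbf{\Phi}^{\widetilde{\mathbf L}}_{j_a,u_o,u}=\mathrm{id}$; therefore $P^{\widetilde{\mathbf L}}_{b,b_o}\big|_{h=0} = \Psi^{\widetilde{\Lambda}}_{j_a,j_o}$. Substituting into the definition of $\mathcal F$ gives
$$\mathcal F(a,0,\zeta)=\Psi^{\widetilde{\Lambda}}_{j_a,j_o}\circ D^{\widetilde{\mathbf L}|_{j_a,u_o}}\circ (\Psi^{\widetilde{\Lambda}}_{j_a,j_o})^{-1}(\zeta),$$
which is linear in $\zeta$. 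Differentiating this linear expression in $\lambda$ at $\lambda=0$ with $\zeta=\lambda\xi$ reproduces the right-hand side of (1).

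For (2): I plan to factor $\mathcal F$ as the composition of $(P^{\widetilde{\mathbf L}}_{b,b_o})^{-1}$, the Cauchy--Riemann operator $D^{\widetilde{\mathbf L}}|_b$, and $P^{\widetilde{\mathbf L}}_{b,b_o}$, and to check each factor is smooth in $(a,h)$ and linear (hence smooth) in the acted-upon section. Since $\widetilde{\mathbf L}=\mathscr P^*\widetilde\Lambda\otimes u^*L$ and the factor $\mathscr P^*\widetilde\Lambda$ depends only on $a$, the $h$-dependence of $\mathbf{\Phi}^{\widetilde{\mathbf L}}_{j_a,u_o,u}$ reduces to that of $\Phi^L_{u_o,u}$ on the $u^*L$-factor, covered by Lemma \ref{smoothness-1 }: its Fr\'echet derivatives of all orders are represented by the bounded multilinear operators $T^l(h;\cdots)$ and $\widetilde T^{\ell}(h;\cdots)$ already constructed above. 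The $a$-dependence of $\Psi^{\widetilde{\Lambda}}_{j_a,j_o}$ follows from smoothness of the parallel transport $\Phi^{\widetilde{\Lambda}}_{a_o,a}$ in the finite-rank line bundle $\widetilde\Lambda$ (ODE regularity with smooth coefficients) together with smoothness of $j_a=(I+H)j_o(I+H)^{-1}$ in $a\in\mathbf A$, and the invertibility of the inverse is supplied by Lemma \ref{isomorphism}. Finally $D^{\widetilde{\mathbf L}}|_b$ depends polynomially on $j_{\mathbf s}$ and smoothly on $u$ through $\nabla^L$, so the same framework applies; joint smoothness in $\xi$ is automatic since $\mathcal F(a,h,\cdot)$ is linear for each fixed $(a,h)$.

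The main obstacle is producing Fr\'echet derivatives of all orders of the conjugation $h\mapsto (\Phi^L_{u_o,u})^{-1}\circ D^L_u\circ \Phi^L_{u_o,u}$ as bounded symmetric multilinear maps on $W^{k,2}(\Sigma,u_o^*TM)$. The estimate \eqref{eqn_T_claim} and its curvature-corrected analogue for $\widetilde T^l$, combined with the Sobolev embedding, provide exactly these multilinear maps. Once this is in hand, combining with the $a$-dependence is routine, since all $a$-dependent pieces live in the finite-dimensional space $\mathbf A$ and enter only through smooth finite-rank operators, so no loss of derivatives occurs.
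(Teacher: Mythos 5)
Your proof is correct and follows essentially the same route as the paper's. Part (1) is exactly the paper's "direct calculation" — you correctly observe that at $h=0$ the map $P^{\widetilde{\mathbf L}}_{b,b_o}$ reduces to $\Psi^{\widetilde{\Lambda}}_{j_a,j_o}$ and that linearity in $\xi$ finishes the computation — and part (2) combines Lemma \ref{smoothness-1 } for the $h$-dependence of $(\Phi^L_{u_o,u})^{-1}D^L_u\Phi^L_{u_o,u}$ with smoothness in $a$ of the finite-rank bundle data $\widetilde\Lambda$, $\Phi^{\widetilde{\Lambda}}_{a_o,a}$, $j_a$, and Lemma \ref{isomorphism}, which is precisely what the paper invokes.
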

\v\n
{\bf Proof.} {\bf (1)} is obtained by a direct calculation. Sine $\widetilde{\Lambda}$ is a smooth finite rank bundle over $\mathbf{A}$, by Lemma \ref{smoothness-1 } we obtain  {\bf (2)}.  $\Box$
\begin{lemma}\label{smoothness of bundle-1} In the local coordinate system $\mathbf{A}$ the bundle $\widetilde{\mathbf{F}}$ is smooth. Furthermore, for any base $\{e_{\alpha}\}$ of the fiber at $b_o$ we can get a smooth frame fields $\{e_{\alpha}(a,h)\}$ for the bundle $\widetilde{\mathbf{F}}$ over $\widetilde{\mathbf{O}}_{b_o}(\delta_{o},\rho_{o})$ .
\end{lemma}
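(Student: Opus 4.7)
The plan is to transport the whole family of Cauchy-Riemann operators to the fixed Banach spaces over $b_{o}$ via $P^{\widetilde{\mathbf{L}}}_{b,b_{o}}$, and then deduce smoothness of the subbundle of kernels from the smoothness of $\mathcal{F}(a,h,\cdot)$ established in Lemma \ref{lem_smooth_F}. By construction $\ker \mathcal{F}(a,h,\cdot)$ is mapped by $(P^{\widetilde{\mathbf{L}}}_{b,b_{o}})^{-1}$ isomorphically onto $H^{0}(\Sigma,\widetilde{\mathbf{L}}|_{b})$, so a smooth frame for $\ker \mathcal{F}(a,h,\cdot)$ will pull back to a smooth frame for $\widetilde{\mathbf{F}}$.

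First I would note that at $(a,h)=(0,0)$ the operator $\mathcal{F}(0,0,\cdot)$ agrees with $D^{\widetilde{\mathbf{L}}}|_{b_{o}}$, which is surjective onto $W^{k-1,2,\alpha}(\Sigma,\widetilde{\mathbf{L}}|_{b_{o}}\otimes\wedge^{0,1}_{j_{o}}T^{\ast}\Sigma)$ because $H^{1}(\Sigma,\widetilde{\mathbf{L}}|_{b_{o}})=0$ by the very ampleness observed in \S\ref{s_intro_2}. Its kernel $K_{0}=H^{0}(\Sigma,\widetilde{\mathbf{L}}|_{b_{o}})$ has finite dimension $N$ equal to the generic rank of $\widetilde{\mathbf{F}}$, and admits a closed complement $V\subset \mathcal{W}^{k,2,\alpha}(\Sigma,\widetilde{\mathbf{L}}|_{b_{o}})$ on which $\mathcal{F}(0,0,\cdot)|_{V}$ is a Banach isomorphism.

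Since $(a,h)\mapsto \mathcal{F}(a,h,\cdot)$ is smooth into the Banach space of bounded linear operators by Lemma \ref{lem_smooth_F}, and the set of Banach isomorphisms is open with smooth inversion, there is a neighborhood $\mathcal{U}$ of $(0,0)$ on which $\mathcal{F}(a,h,\cdot)|_{V}$ remains an isomorphism, with inverse depending smoothly on $(a,h)$. Given a basis $\{e_{\alpha}\}$ of $K_{0}$, I would set
\[
e_{\alpha}(a,h)\;:=\;e_{\alpha}\;-\;\bigl(\mathcal{F}(a,h,\cdot)|_{V}\bigr)^{-1}\mathcal{F}(a,h,e_{\alpha}),
\]
which is a smooth $\mathcal{W}^{k,2,\alpha}$-valued function of $(a,h)$ satisfying $\mathcal{F}(a,h,e_{\alpha}(a,h))=0$ and $e_{\alpha}(0,0)=e_{\alpha}$. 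Because the kernel dimension is identically $N$, continuity forces $\{e_{\alpha}(a,h)\}$ to remain a basis of $\ker \mathcal{F}(a,h,\cdot)$ on $\mathcal{U}$ (after shrinking $\delta_{o},\rho_{o}$ if necessary). Applying the smooth isomorphism $(P^{\widetilde{\mathbf{L}}}_{b,b_{o}})^{-1}$ then yields the desired smooth frame $\{(P^{\widetilde{\mathbf{L}}}_{b,b_{o}})^{-1}e_{\alpha}(a,h)\}$ for $\widetilde{\mathbf{F}}$, which in particular shows $\widetilde{\mathbf{F}}$ is a smooth finite rank subbundle.

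The only non-routine step is verifying that $(\mathcal{F}(a,h,\cdot)|_{V})^{-1}$ really depends smoothly on $(a,h)$; this reduces to combining the smoothness of $\mathcal{F}$ from Lemma \ref{lem_smooth_F} with the smoothness of the inversion map on the open set of invertible operators in $\mathcal{L}(V,W^{k-1,2,\alpha})$, which is standard. Surjectivity and constant rank of the kernel are precisely the inputs already secured in \S\ref{s_intro_2}, so no further analytic obstacle arises.
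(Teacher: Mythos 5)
Your proposal is correct, and it establishes exactly what the lemma claims. The paper proceeds by invoking the parametric implicit function theorems from the Appendix (Theorems \ref{details_implicit_function_theorem}, \ref{smooth_implicit_function_theorem}) applied to $\mathcal{F}(a,h,\xi)$, obtaining a smooth correction map $f^{\widetilde{\mathbf{L}}}_{a,h}$ so that $\zeta\mapsto (P^{\widetilde{\mathbf{L}}}_{b,b_o})^{-1}\bigl(\zeta+Q^{\widetilde{\mathbf{L}}}_{b_o}\circ f^{\widetilde{\mathbf{L}}}_{a,h}(\zeta)\bigr)$ parametrizes the fibers. Your argument reaches the same parametrization (with $V=\im\,Q^{\widetilde{\mathbf{L}}}_{b_o}$ playing the role of the chosen complement and $e_\alpha(a,h)=e_\alpha-(\mathcal{F}(a,h,\cdot)|_V)^{-1}\mathcal{F}(a,h,e_\alpha)$ being precisely $\zeta+Q\circ f_{a,h}(\zeta)$ after unwinding), but you do so more directly: you exploit that $\mathcal{F}(a,h,\cdot)$ is \emph{linear} in the last slot, so the abstract IFT with its contraction-mapping machinery is not needed, only the openness of the set of Banach isomorphisms and the smoothness of inversion. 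This buys a completely explicit frame formula and makes the required inputs (surjectivity of $D^{\widetilde{\mathbf{L}}}|_{b_o}$ from $H^1=0$, constant rank $N$ of $H^0$, and Lemma~\ref{lem_smooth_F}) visibly sufficient; the paper's route buys uniformity of exposition with the nonlinear gluing arguments later, where the same parametric IFT must be used in a genuinely nonlinear setting. One small remark: the claim that the $e_\alpha(a,h)$ remain a basis is best justified not by ``continuity'' but by the structural observation you essentially already made, namely that each $e_\alpha(a,h)$ projects to $e_\alpha$ along $V$ (so the collection is automatically independent) while $\ker\mathcal{F}(a,h,\cdot)$ is the graph of a linear map $K_0\to V$ and hence has dimension exactly $N$; this is cleaner and avoids appealing to the externally-known constancy of $\dim H^0$.
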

\n
{\bf Proof.} Note that $D_{\xi}\mathcal {F}|_{b_{o}}=D^{\widetilde{\mathbf L}}|_{b_{o}}$. It is a Fredholm operator with $\mbox{coker} D^{\widetilde{\mathbf L}}|_{b_{o}}=0$ ( because of $H^1(\Sigma, \widetilde{\mathbf{L}}\mid_{b})= 0$ ). There is a right inverse $Q^{\widetilde{\mathbf{L}}}_{b_{o}}$ of $D^{\widetilde{\mathbf L}}|_{b_o}$.
Now we view $a$ and $h$ as parameters. It is easy to check that the conditions of the implicit function theorem (Theorem \ref{details_implicit_function_theorem}, Theorem \ref{smooth_implicit_function_theorem}) hold. Then there
there exist $\delta_o>0$, $\rho_o>0$ and a small neighborhood $O$ of $0 \in \ker\;D^{\widetilde{\mathbf L}}|_{b_o}$ and a unique smooth map
$$f^{\widetilde{\mathbf{L}}}: \widetilde{\mathbf{O}}_{b_o}(\delta_{o},\rho_{o})\times O\rightarrow W^{k-1,2,\alpha}(\Sigma,\wedge^{0,1}T\Sigma\otimes \widetilde{\mathbf L}|_{b_{o}})$$ such that for any $\zeta\in O$ and any $b\in \widetilde{\mathbf{O}}_{b_o}(\delta_{o},\rho_{o})$
$$ D^{\widetilde{\mathbf L}}|_{b}\circ(P^{\widetilde{\mathbf{L}}}_{b,b_{o}})^{-1}\left(\zeta + Q^{\widetilde{\mathbf{L}}}_{b_{o}}\circ f^{\widetilde{\mathbf{L}}}_{a,h}(\zeta)\right)=0.$$
We get the smoothness of $\widetilde{\mathbf{F}}$ in $\widetilde{\mathbf{O}}_{b_o}(\delta_{o},\rho_{o})$. Furthermore, choosing a base $\{e_{\alpha}\}$ of the fiber at $b_o$ we get a smooth frame fields $\{e_{\alpha}(a,h)\}$ by Theorem \ref{smooth_implicit_function_theorem}.
We complete the proof. \;\;$\Box$
\v
Let $(\Sigma,j,{\bf y})$ be a smooth Riemann surface of genus $g$ with $n$ marked points, $u:\Sigma\to M$ be a $C^{\infty}$ map. Denote $b=(j,{\bf y},u)$. For any $\varphi\in Diff^+(\Sigma)$ denote
	$$ b'=(j',{\bf y}',u') =\varphi\cdot (j,{\bf y},u)=(\varphi^*j, \varphi^{-1}{\bf y}, \varphi^{*}u).$$
Then
\begin{equation}\label{eqn_iso_Lo}
(u')^{*}i=\varphi^{*}(u^{*}i), \;\;(u')^{*}\nabla^{L}=\varphi^{*}(u^{*}\nabla^L).
\end{equation}
Let $\varphi\in Diff^+(\Sigma)$. For any section $\xi\in L$ we have
\begin{equation}
\label{eqn_iso_L}
(\varphi\cdot (u^*\xi))^p=(\varphi^*\circ u^*\xi)^p=((u\circ \varphi)^*\xi)^p=
((u')^*\xi)^p
\end{equation}
 and for any  $f(z) (dz)^{p}$
\begin{equation}\label{eqn_iso_grp}
\varphi\cdot f(z) (dz)^p=f(\varphi^{-1}(w))[d(\varphi^{-1}(w))]^p ,
\end{equation}
 where
$w=\varphi(z).$
We have the following lemma
\begin{lemma}\label{equi_diff} 	
   $(\varphi\cdot\widetilde{\mathbf{L}})|_{b'}=\varphi^{*}(\widetilde{\mathbf{L}}|_{b})$, $ D^{\widetilde{\mathbf L}}|_{b'}(\varphi^{*}\xi)=\varphi^{*}(D^{\widetilde{\mathbf L}}|_{b}(\xi))$ for any $\xi\in \widetilde{\mathbf L}|_{b}$.
 \end{lemma}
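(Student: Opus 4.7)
The plan is to verify both assertions by unpacking the definitions and invoking the naturality of the Cauchy-Riemann operator and of pullback connections. The underlying observation is that, by construction of $b'=\varphi\cdot b$, the map $\varphi:(\Sigma,j')\to(\Sigma,j)$ is a biholomorphism sending the marked points $\mathbf{y}'$ to $\mathbf{y}$, hence an isomorphism of marked Riemann surfaces $(\Sigma,j',\mathbf{y}')\to(\Sigma,j,\mathbf{y})$.

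First I would decompose $\widetilde{\mathbf{L}}|_{b}=\mathscr{P}^{*}\widetilde{\Lambda}\otimes u^{*}L$ and treat the two tensor factors separately. For the $u^{*}L$ factor, since $u'=u\circ\varphi$ one has $(u')^{*}L=\varphi^{*}(u^{*}L)$ as complex line bundles, and by \eqref{eqn_iso_Lo} the complex structure $i$ and the unitary connection $\nabla^{L}$ pull back compatibly. For the $\widetilde{\Lambda}$ factor, the sheaf $\Lambda=\lambda\bigl(\sum_{i=1}^{n}y_i\bigr)$ is canonically functorial under isomorphisms of marked curves, since both the dualizing sheaf $\lambda$ and the twisting divisor $\sum y_i$ are determined by the marked complex structure alone; explicitly, a local section $f(z)(dz)^{p}$ near a marked or nodal point pulls back to $f(\varphi^{-1}(w))[d(\varphi^{-1}(w))]^{p}$, which is precisely \eqref{eqn_iso_grp}. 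Tensoring the two identifications yields $(\varphi\cdot\widetilde{\mathbf{L}})|_{b'}=\varphi^{*}(\widetilde{\mathbf{L}}|_{b})$.

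For the operator equivariance, I would work in a local frame. Write $\xi=f(\mathbf{k}\otimes e)^{p}$, with $\mathbf{k}$ a local frame of $\widetilde{\Lambda}$ and $e$ a local frame of $u^{*}L$; by the first part, $\varphi^{*}\xi=(f\circ\varphi)(\varphi^{*}\mathbf{k}\otimes\varphi^{*}e)^{p}$. Applying the defining formula \eqref{eqn_D_L} to both sides, the identity reduces to two pointwise facts: (i) $\bar\partial_{j'}(f\circ\varphi)=\varphi^{*}(\bar\partial_{j}f)$, which expresses that pullback by a $(j',j)$-biholomorphism commutes with $\bar\partial$ on functions; and (ii) $D^{L}_{u'}(\varphi^{*}e)=\varphi^{*}(D^{L}_{u}e)$, which is exactly the second equality of \eqref{eqn_iso_Lo} applied to a section, using that $D^{L}=u^{*}\nabla^{L,(0,1)}$. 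Substituting these into the expansion of $D^{\widetilde{\mathbf L}}|_{b'}(\varphi^{*}\xi)$ and comparing with $\varphi^{*}(D^{\widetilde{\mathbf L}}|_{b}\xi)$ yields the identity term by term.

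The argument is essentially a bookkeeping exercise and no hard analytic input is required. The only point warranting care is the tensorial nature of the second summand in \eqref{eqn_D_L} under change of the local frame $\mathbf{k}$ of $\widetilde{\Lambda}$; a standard Leibniz computation confirms independence of this choice, so the identity patches globally. Alternatively, one may argue intrinsically: since $\varphi$ is a biholomorphism of marked curves, it preserves both the holomorphic structure on $\widetilde{\mathbf{L}}$ and its associated Cauchy-Riemann operator, making the intertwining automatic.
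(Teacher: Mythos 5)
Your proposal matches the paper's proof in both structure and substance: the first identity is obtained from \eqref{eqn_iso_L} and \eqref{eqn_iso_grp}, and the second by expanding $D^{\widetilde{\mathbf L}}$ in a local frame via \eqref{eqn_D_L} and reducing to the two facts that $\bar\partial$ commutes with pullback along the $(j',j)$-biholomorphism $\varphi$ and that $D^{L}|_{b'}=\varphi^{*}D^{L}|_{b}$ by \eqref{eqn_iso_Lo}. The only addition is the closing remark about frame-independence and the intrinsic reformulation, which the paper leaves implicit.
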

\v\n
{\bf Proof.} The first inequality follows from \eqref{eqn_iso_L} and \eqref{eqn_iso_grp}.
For any $f(\mathbf{k}\otimes \mathbf{e}_{u})^{p}\in  \widetilde {\mathbf L}|_{b}$, we have
$$
D^{\widetilde {\mathbf L}}(\mathbf{k}\otimes \mathbf{e}_{u})^{p}=\bar{\p}(f)\cdot(\mathbf{k}\otimes \mathbf{e}_{u})^{p}+pf\cdot\mathbf{k} \otimes D^{L}(\mathbf{e}_{u})\otimes(\mathbf{k}\otimes \mathbf{e}_{u})^{p-1}.
$$
By $\varphi\cdot f(z,\bar z)= f(\varphi^{-1}(w),\overline{\varphi^{-1}(w)}),$ we get
$$\bar{\p}(\varphi (f(z,\bar z)))= \frac{\p f}{\p \bar z}(\varphi^{-1}(w),\overline{\varphi^{-1}(w)}) \overline{\frac{\p\varphi^{-1}(w)}{\p w}}d\bar w = \frac{\p f}{\p \bar z}(\varphi^{-1}(w),\overline{\varphi^{-1}(w)}) d\overline{\varphi^{-1}(w)}   $$
Similar \eqref{eqn_iso_grp} we have
$$\varphi\cdot( \bar{\p}(f(z,\bar z)))=\varphi \cdot\left(\frac{\p f}{\p \bar z}(z,\bar z) d\bar z\right)= \frac{\p f}{\p \bar z}(\varphi^{-1}(w),\overline{\varphi^{-1}(w)}) d\overline{\varphi^{-1}(w)} $$
It follows that $\varphi\cdot(\bar{\p }f)=\bar{\p}(\varphi\cdot f).$
Since $D^{L}|_{b}=(u^{*}\nabla^{L})^{0,1}$ and $\varphi$ is holomorphic, by \eqref{eqn_iso_Lo} we have
$$D^{L}|_{b'}=((u')^{*}\nabla^{L})^{0,1}=(\varphi^{*}(u^{*}\nabla^{L}))^{0,1}=\varphi^{*}(u^{*}\nabla^{L})^{0,1}=\varphi^*D^{L}|_{b}.$$
Then the second inequality follows from the first inequality.  $\Box$

\v
\begin{remark}\label{isotropy group}
Let $G_{b_o}$ be the isotropy group at $b_o$. By Lemma \ref{equi_diff}, $D^{\widetilde{\mathbf L}}$ is $G_{b_o}$-equivariant and $G_{b_o}$ acts on $\mbox{ker} D^{\widetilde{\mathbf L}}|_{b_{o}}$. We may choose a $G_{b_o}$-equivariant right inverse $Q^{\widetilde{\mathbf{L}}}_{b_{o}}$. In fact, let $\hat Q^{\widetilde{\mathbf{L}}}_{b_o}$ be a right inverse of $D^{\widetilde{\mathbf L}}|_{b_{o}}$,  we define
$$Q^{\widetilde{\mathbf{L}}}_{b_o}(\eta)=\frac{1}{|G_{b_{o}}|}   \sum_{\varphi\in G_{b_{o}}}  \varphi^{-1}\cdot  \hat Q^{\widetilde{\mathbf{L}}}_{b_o}( \varphi \cdot \eta).$$
Then, for any $\varphi'\in G_{b_{o}} ,$ we have
 $$Q^{\widetilde{\mathbf{L}}}_{b_o}(\varphi'\cdot\eta)=\frac{1}{|G_{b_{o}}|}   \sum_{\varphi\in G_{b_{o}}}  \varphi^{-1}\cdot  \hat Q^{\widetilde{\mathbf{L}}}_{b_o}( \varphi \cdot \varphi'\cdot \eta)=$$$$
 \frac{1}{|G_{b_{o}}|}   \sum_{\varphi\in G_{b_{o}}} \varphi'\cdot(\varphi')^{-1}  \varphi^{-1}\cdot  \hat Q^{\widetilde{\mathbf{L}}}_{b_o}( \varphi \cdot \varphi'\cdot \eta)=\varphi'\cdot Q^{\widetilde{\mathbf{L}}}_{b_o}(\eta).$$
By uniqueness, it follows that $f^{\widetilde{\mathbf{L}}}$ is $G_{b_o}$-equivariant. So we have a $G_{b_o}$-equivariant version of Lemma \ref{smoothness of bundle-1}. In particular,
for any base $\{e_{\alpha}\}$ of the fiber at $b_o$ we can get a smooth $G_{b_o}$-equivariant frame fields $\{e_{\alpha}(a,h)\}$ for the bundle $\widetilde{\mathbf{F}}$ over $\widetilde{\mathbf{O}}_{b_o}(\delta_{o},\rho_{o})$.
\end{remark}

\begin{remark}\label{smoothness in coord.}
Note that what Lemma \ref{smoothness of bundle-1} claim is the smoothness in a local coordinate system
$(\psi, \Psi): (O, \pi_{\mathbf{T}}^{-1}(O))\rightarrow (\mathbf{A}, \mathbf{A}\times \Sigma)$. If we choose another local coordinate system $(\psi', \Psi'): (O', \pi_{\mathbf{T}}^{-1}(O'))\rightarrow (\mathbf{A}', \mathbf{A}'\times \Sigma)$ we have
$u'=u\circ d\vartheta_{a}^{-1}$
where $\vartheta_{a}=\Psi'\circ \Psi^{-1}|_{a}\in Diff^+(\Sigma)$ is a family of diffeomorphisms. If $u$ is only $W^{k,2}$ map, the coordinate transformation is not smooth. Nevertheless the Lemma \ref{smoothness of bundle-1} is still very useful in the study of the smoothness of top strata of virtual neighborhood, as we have a PDE here, we can use the standard elliptic estimates to get the smoothness of $u$ ( see \cite{LS-1}).
\end{remark}

\v

\v
\section{\bf Gluing}\label{pregluing}

\subsection{\bf Pregluing for maps}

Let $(\Sigma, j,{\bf y}, q)$ be a marked nodal Riemann surface of genus $g$ with $n$ marked points ${\bf y}=(y_1,...,y_n)$ and one nodal point $q$.  We write the marked nodal Riemann surface as
$$
\left(\Sigma=\Sigma_{1}\wedge\Sigma_{2},j=(j_{1},j_{2}),{\bf y}=({\bf y}_{1},{\bf y}_{2}), q=(q_1,q_2)\right),$$
where $(\Sigma_{i},j_{i},{\bf y}_{i}, q_i)$ are smooth Riemann surfaces, $(j_i, \mathbf{y}_i)\in \mathbf{A}_i$, $i=1,2$. We say that $q_1,q_2$ are paired to form $q$. Assume that $(\Sigma_{i},j_{i},{\bf y}_{i},q_{i})$ is stable, i.e., $n_{i}+2g_{i}+1\geq 3$, $i=1,2$. We choose metric $\mathbf{g}_i$ on each $\Sigma_i$ as in \S\ref{metric on surfaces}.
Let $z_i$ be the cusp coordinates around $q_i$, $z_i(q_i)=0$, $i=1,2$.
 Let
$$
z_1=e^{-s_1 - 2\pi \sqrt{-1} t_1},\;\;\;z_2=e^{s_2 +2\pi\sqrt{-1}  t_2}.
$$
$(s_i, t_i)$ are called the cusp holomorphic cylindrical coordinates near $q_{i}$.  In terms of the cusp holomorphic cylindrical coordinates we write
$$\stackrel{\circ}{\Sigma}_{1}:=\Sigma_1\setminus\{q_1\}\cong\Sigma_{10}\cup\{[0,\infty)\times S^1\},\;\;\;\stackrel{\circ}{\Sigma}_{2}:=\Sigma_2\setminus\{q_2\}\cong\Sigma_{20}\cup\{(-\infty,0]\times S^1\}.$$
Here $\Sigma_{i0}\subset \Sigma_i$, $i=1,2$, are compact surfaces with boundary. Put $\stackrel{\circ}{\Sigma}= \Sigma\setminus\{q_1,q_2\} =\stackrel{\circ}{\Sigma}_{1}\cup\stackrel{\circ}{\Sigma}_{2}$. We introduce the notations
$$
\Sigma_i(R_0)=\Sigma_{i0}\cup \{(s_i,t_i)|\;|s_i| \leq R_0\},\;\; \;\;\;\;\;\Sigma(R_0)=\Sigma_1(R_0)\cup \Sigma_2(R_0).$$
For any gluing parameter $(r,\tau)$ with $r\geq R_{0}$ and $\tau\in S^1$ we construct a surface $\Sigma_{(r)}$
with the gluing formulas:
\begin{equation}\label{gluing surface}
s_1=s_2 + 2r,\;\;\;t_1=t_2 + \tau.
\end{equation}
where we use $(r)$ to denote gluing parameters.
\v
Let $b_o=(a_o, u)$, $a_o=(\Sigma, j,{\bf y}, q)$, $u=(u_1,u_2)$, where $u_{i}:\Sigma_i\rightarrow M$ are are $(j_i,J)$-holomorphic maps with $u_{1}(q)=u_{2}(q).$ We will use the cusp holomorphic cylinder coordinates to describe the construction of $u_{(r)}:\Sigma_{(r)}\to M$.
We choose local normal coordinates $(x^{1},\cdots,x^{2m})$ in a neighborhood  $O_{u(q)}$ of $u(q)$ and choose $R_0$ so large that $u(\{|s_i|\geq \frac{r}{2}\})$ lie in $O_{u(q)}$ for any $r>R_0$. We glue the map $(u_1,u_2)$ to get a pregluing maps $u_{(r)}$ as follows. Set\\
\[
u_{(r)}=\left\{
\begin{array}{ll}
u_1 \;\;\;\;\; on \;\;\Sigma_{10}\bigcup\{(s_1,t_1)|0\leq s_1 \leq
\frac{r}{2}, t_1 \in S^1 \}    \\  \\
u_1(q)=u_2(q) \;\;
on \;\{(s_1,t_1)| \frac{3r}{4}\leq s_1 \leq
\frac{5r}{4}, t_1 \in S^1 \}  \\   \\
u_2 \;\;\;\;\; on \;\;\Sigma_{20}\bigcup\{(s_2,t_2)|0\geq s_2
\geq - \frac{r}{2}, t_2 \in S^1 \}     \\
\end{array}.
\right.
\]
To define the map $u_{(r)}$ in the remaining part we fix a smooth cutoff
function $\beta : {\mathbb{R}}\rightarrow [0,1]$ such that
\begin{equation}\label{def_beta}
\beta (s)=\left\{
\begin{array}{ll}
1 & if\;\; s \geq 1 \\
0 & if\;\; s \leq 0
\end{array}
\right.
\end{equation}
and $\sqrt{1-\beta^2}$ is a smooth function,  $0\leq \beta^{\prime}(s)\leq 4$ and $\beta^2(\frac{1}{2})=\frac{1}{2}.$
 We define\\
$$u_{(r)}= u_1(q)+ \left(\beta\left(3-\frac{4s_1}{r}\right)(u_1(s_1,t_1)-u_1(q)) +\beta\left(\frac{4s_1}{r}-5\right)(u_2(s_1-2r,t_1-\tau)- u_2(q))\right).$$

\vskip 0.1in
\noindent

\subsection{\bf Pregluing for $\widetilde{\mathbf{F}}$}\label{s_intro_3}

Let $b_o=(a_o,u)$, $u=(u_1,u_2)$, $u_i:\Sigma_i\to M$ are $(j_i,J)$-holomorphic maps, $i=1,2$. We choose $\{e_V\}$ as in \S\ref{s_intro_2} such that $\widetilde{\mathbf L}|_{b_{o}}$ is a holomorphic lie bundle. Then $D^{\widetilde{\mathbf L}}|_{b_{o}}=\bar{\p}_{j,u}$. Recall that with respect to the base $\left(\frac{dz}{z} \otimes e\right)^{p}$ for $\widetilde{\mathbf L}|_{b}$ we have a local trivialization.

Denote
$$\beta_{1;R}(s_1)=\beta\left(\frac{1}{2}+\frac{r-s_1}{R}\right),\;\;\;
\beta_{2;R}(s_{2})=\sqrt{1-\beta^2\left(\frac{1}{2}-\frac{s_{2}+r}{R}\right)}, $$
where $\beta$ is the cut-off function defined in \eqref{def_beta}.
Then we have
\begin{equation}\label{beta_rel.}\beta_{2;R}^2(s_1- 2r)=1-\beta^2\left( \frac{1}{2}-\frac{s_{1}-r}{R}\right)=1-\beta_{1;R}^2(s_1).
\end{equation}
For any $\eta \in
C^{\infty}(\Sigma_{(r)}; \widetilde{\mathbf L}|_{b_{(r)}}\otimes \wedge_{j}^{0,1}T\Sigma_{(r)})$,
let
$$
\eta_{i}(p) =\left\{
\begin{array}{ll}
\eta  & if\;\; p\in \Sigma_{i0}\cup\{|s_{i}|\leq r-1\}\\
\beta_{i;2}(s_{i})\eta(s_{i},t_{i}) & if\;\; p\in \{r-1\leq |s_{i}|\leq r+1\} \\
0 & otherwise.
\end{array}
\right..
$$
If no danger of confusion  we will simply write $\eta_{i}=\beta_{i;2}\eta.$ Then $\eta_{i}$  can be considered as
a section over $\Sigma_i$. Define
\begin{equation}
\|\eta\|_{r,k-1,2,\alpha}=\|\eta_{1} \|_{\Sigma_1,j_1,k-1,2,\alpha} +
\|\eta_{2} \|_{\Sigma_2,j_2,k-1,2,\alpha}.
\end{equation}
\v\n
We now define a norm $\|\cdot\|_{r,k,2,\alpha}$ on
$C^{\infty}(\Sigma_{(r)}; \widetilde{\mathbf L}|_{b_{(r)}}).$ For any section
$\zeta\in C^{\infty}(\Sigma_{(r)}; \widetilde{\mathbf L}|_{b_{(r)}})$ denote
$$\zeta_0=\int_{ {S}^1}\zeta(r,t)dt,$$
$$\zeta_1(s_1,t_1) = (\zeta-\hat \zeta_{0})(s_1,t_1)\cdot\beta_{1;2}(s_1),\;\;\;\zeta_2(s_2,t_2)= (\zeta-\hat \zeta_{0})(s_2,t_2)\cdot\beta_{2;2}(s_{2}).$$
We define
\begin{equation}
\|\zeta\|_{r,k,2,\alpha}=\|\zeta_1\|_{\Sigma_1,j_1,k,2,\alpha} +
\|\zeta_2\|_{\Sigma_2,j_2,k,2,\alpha}+|\zeta_{0}|.
\end{equation}
Denote the resulting completed spaces by $W^{k-1,2,\alpha}(\Sigma_{(r)}; \widetilde{\mathbf L}|_{b_{(r)}}\otimes \wedge_{j_o}^{0,1}T\Sigma_{(r)})$ and $W^{k,2,\alpha}(\Sigma_{(r)}; \widetilde{\mathbf L}|_{b_{(r)}})$  respectively.
\v
In terms of the cusp holomorphic cylinder coordinates we may write
$$D^{\widetilde{\mathbf L}}|_{b_{(r)}}=\bar{\p}_{j_o} + E^{\widetilde{\mathbf{L}}}_{b_{(r)}},$$
where $\bar{\p}_{j_o}=\frac{1}{2}\left(\frac{\p}{\p s} + \sqrt{-1}\frac{\p}{\p t}\right),\;\frac{\p}{\p t}=j_{o}\frac{\p}{\p s}$ and
\begin{equation}\label{linearized operator}
E^{\widetilde{\mathbf{L}}}_{b_{(r)}}=\frac{p}{2}\left(\sum \frac{\p u_{(r)}^j}{\p s} + \sqrt{-1}\sum\frac{\p u_{(r)}^j}{\p t} \right)\frac{(\nabla^{L}_{\p_{x_j}}\mathbf e_{u_{(r)}},\mathbf  e_{u_{(r)}})}{(\mathbf e_{u_{(r)}}, \mathbf e_{u_{(r)}})}.
\end{equation}
In fact, for any $f(\mathbf{k}\otimes \mathbf{e}_{u_{(r)}})^{p}\in  \widetilde {\mathbf L}|_{b_{(r)}},$ by \eqref{eqn_D_L} we have
\begin{equation}\label{eqn_D_L_s}
D^{\widetilde{\mathbf L}}|_{b_{(r)}}(f(\mathbf{k}\otimes \mathbf{e}_{u_{(r)}})^{p})\left(\frac{\p}{\p s}\right)= \bar{\p}_{j_{o}} (f) (\mathbf{k}\otimes \mathbf{e}_{u_{(r)}})^{p}+ pf (\mathbf{k}\otimes D^{L}\mathbf{e}_{u_{(r)}})\left(\frac{\p}{\p s}\right)(\mathbf{k}\otimes \mathbf{e}_{u_{(r)}})^{p-1}.
\end{equation}
On the other hand, using $D^{L}=\frac{1}{2}\left(\nabla^{L}+u_{(r)}^*i\cdot\nabla^{L}\cdot j_{o}\right)$, we obtain that
\begin{align}
D^{L}\mathbf{e}_{u_{(r)}}\left(\frac{\p}{\p s}\right)&=\frac{1}{2}\left(\nabla^{L}+u_{(r)}^*i\nabla^{L}\cdot j_{o}\right)(\mathbf{e}_{u_{(r)}})\left(\frac{\p}{\p s}\right)\nonumber\\
&=\frac{1}{2}\left(\nabla^{L}(\mathbf{e}_{u_{(r)}})\left(\frac{\p}{\p s}\right)+u_{(r)}^*i\nabla^{L}(\mathbf{e}_{u_{(r)}})\left(\frac{\p}{\p t}\right) \right)\nonumber\\
&=\frac{1}{2}\nabla^{L}_{\p_{x_{j}}}(\mathbf{e}_{u_{(r)}}) \left(\left(\frac{\p u_{(r)}^{j}}{\p s}\right)+\sqrt{-1}  \left(\frac{\p u_{(r)}^{j}}{\p t}\right) \right)\nonumber\\\label{eqn_D_L_1}
&=\frac{1}{2}\frac{(\nabla^{L}_{\p {x_{j}}}(\mathbf{e}_{u_{(r)}}),\mathbf{e}_{u_{(r)}})}{(\mathbf{e}_{u_{(r)}},\mathbf{e}_{u_{(r)}})} \mathbf{e}_{u_{(r)}} \left(\left(\frac{\p u_{(r)}^{j}}{\p s}\right)+\sqrt{-1}  \left(\frac{\p u_{(r)}^{j}}{\p t}\right) \right)
\end{align}
where we used the fact that  $L$ is a line bundle.  Substituting \eqref{eqn_D_L_1} into \eqref{eqn_D_L_s} we get \eqref{linearized operator}.
\v
Note that $u_i(s_i,t_i)$  exponentially converges to $0$, with higher-order derivatives, as $s_i\to \infty $, $i=1,2$. We have
\begin{equation}\label{gluing surface}
E^{\widetilde{\mathbf{L}}}_{b_{(r)}}\mid_{|s_i|\leq \tfrac{r}{2}}=0,\;\;\;\;\;\;\sum\limits_{p+q=d} \left|\frac{\p^{d}E^{\widetilde{\mathbf{L}}}_{b_{(r)}}}{\partial s_i^ p \partial  t_i^q}\right|_{\tfrac{r}{2}\leq |s_i|\leq \tfrac{3r}{2}}\rightarrow 0,
\end{equation}
for $i=1,2,\forall d\geq0$, exponentially and
uniformly in $t_i$ as $r\rightarrow \infty $.
\v
For any $b=(a,v)$ with $v=\exp_{u_{(r)}}(h_{r})$, denote $\mathbf{e}_{v}=P^{\widetilde{\mathbf L}}_{b_{(r)},b}\mathbf e_{u_{(r)}}$. We have
$$(P_{b_{(r)},b}^{\widetilde{\mathbf L}})^{-1}\circ D^{\widetilde{\mathbf L}}|_{b}\circ P^{\widetilde{\mathbf L}}_{b_{(r)},b}= \bar{\p}_{j_{a}}+E^{\widetilde{\mathbf L}}_{b},$$
where $ \bar{\p}_{j_{a}}=\frac{\p}{\p s} + \sqrt{-1}j_{a}\frac{\p}{\p s}$ and
\begin{equation}\label{defn_F_b}
E^{\widetilde{\mathbf L}}_{b}=\frac{p}{2} \sum \left(\frac{\p v^j}{\p s} + \sqrt{-1}\left(j_{a}\frac{\p }{\p s}\right)(v^j)\right)\frac{((P^{\widetilde{\mathbf L}}_{b_{(r)},b})^{-1}\nabla^{L}_{\eta_j}\mathbf{e}_{v},\mathbf  e_{u_{(r)}})}{(\mathbf e_{u_{(r)}}, \mathbf e_{u_{(r)}})}.
\end{equation}
It is easy to check that
 \begin{equation}\label{eqn_diff_D}
 \|D^{\widetilde{\mathbf L}}|_{b_{(r)}}-(P_{b_{(r)},b}^{\widetilde{\mathbf L}})^{-1}\circ D^{\widetilde{\mathbf L}}|_{b}\circ P^{\widetilde{\mathbf L}}_{b_{(r)},b}\|\leq C(|a-a_{o}|+\|h\|_{k,2,\alpha,r}).
 \end{equation}
Given
$\eta\in W^{k-1,2,\alpha}(\Sigma_{(r)}; \widetilde{\mathbf L}|_{b_{(r)}}\otimes \wedge_{j_o}^{0,1}T\Sigma_{(r)})$
denote
\begin{equation*}
\left(\eta_1(s_1,t_1),\eta_2(s_2,t_2)\right) = \left(\beta_{1;2}(s_1)\eta(s_1,t_1), \beta_{2;2}(s_2)\eta(s_2,t_2)\right),\end{equation*}
\begin{equation*}
Q^{\widetilde{\mathbf L}}_{b_o}(\eta_1,\eta_2)=\zeta=(\zeta_1,\zeta_2),\;\;\zeta_{i}\in W^{k,2,\alpha}(\Sigma;\widetilde{\mathbf L}|_{b_o}).\end{equation*}
where $Q^{\widetilde{\mathbf L}}_{b_o}$ is a right inverse of $D^{\widetilde{\mathbf L}}|_{b_{o}}$. Define
\begin{equation*}
\left(Q^{\widetilde{\mathbf L}}_{b_{(r)}}\right)'(\eta):= \zeta_{(r)}=(\beta_{1;r}(s_1)\zeta_{1}(s_1,t_1)+\beta_{2;r}(s_{1}-2r)\zeta_{2}(s_{1}-2r,t_1-\tau)).
\end{equation*}

\begin{lemma}\label{DQ'} For any $\eta\in W^{k-1,2,\alpha}(\Sigma_{(r)}; \widetilde{\mathbf L}|_{b_{(r)}}\otimes \wedge_{j_o}^{0,1}T\Sigma_{(r)})$
we have
\begin{equation}\label{eqn_DQ'}
D^{\widetilde{\mathbf L}}|_{b_{(r)}}\circ  \left(Q^{\widetilde{\mathbf L}}_{b_{(r)}}\right)'(\eta)- \eta=\sum (\bar{\partial}\beta_{i;r}) \zeta_{i}+\sum  \beta_{i;r}E^{\widetilde{\mathbf{L}}}_{b_{(r)}}\zeta_{i}\end{equation}
$$+(\sum \beta_{i;r}\beta_{i;2}-1)\eta
.$$\end{lemma}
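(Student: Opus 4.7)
The strategy is a direct Leibniz-rule computation, reducing \eqref{eqn_DQ'} to the decomposition $D^{\widetilde{\mathbf L}}|_{b_{(r)}}=\bar{\partial}_{j_o}+E^{\widetilde{\mathbf L}}_{b_{(r)}}$ combined with the right-inverse identity for $Q^{\widetilde{\mathbf L}}_{b_o}$ on each smooth component. Recall that the holomorphic frame $\{e_V\}$ was chosen precisely so that $D^{\widetilde{\mathbf L}}|_{b_o}=\bar{\partial}_{j_o}$ in the local trivialization $(\mathbf{k}\otimes \mathbf{e}_{u})^p$ (equivalently $E^{\widetilde{\mathbf L}}_{b_o}=0$, since $u$ is honestly $J$-holomorphic on each component). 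Consequently the defining equation $Q^{\widetilde{\mathbf L}}_{b_o}(\eta_1,\eta_2)=(\zeta_1,\zeta_2)$ gives
\[
\bar{\partial}_{j_o}\zeta_i\;=\;D^{\widetilde{\mathbf L}}|_{b_o}\zeta_i\;=\;\eta_i\;=\;\beta_{i;2}\,\eta\qquad(i=1,2).
\]

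Next I apply $D^{\widetilde{\mathbf L}}|_{b_{(r)}}=\bar{\partial}_{j_o}+E^{\widetilde{\mathbf L}}_{b_{(r)}}$ to
\[
\zeta_{(r)}(s_1,t_1)=\beta_{1;r}(s_1)\,\zeta_1(s_1,t_1)+\beta_{2;r}(s_1-2r)\,\zeta_2(s_1-2r,t_1-\tau).
\]
The gluing change of coordinates $(s_1,t_1)\mapsto(s_2,t_2)=(s_1-2r,\,t_1-\tau)$ is a holomorphic translation, hence $\bar{\partial}_{j_o}$ coincides in both frames and Leibniz yields
\[
\bar{\partial}_{j_o}\zeta_{(r)}=\sum_{i=1}^{2}(\bar{\partial}_{j_o}\beta_{i;r})\,\zeta_i+\sum_{i=1}^{2}\beta_{i;r}\,\bar{\partial}_{j_o}\zeta_i=\sum_{i=1}^{2}(\bar{\partial}_{j_o}\beta_{i;r})\,\zeta_i+\Bigl(\sum_{i=1}^{2}\beta_{i;r}\beta_{i;2}\Bigr)\eta,
\]
using the identity from the previous step. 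Since $E^{\widetilde{\mathbf L}}_{b_{(r)}}$ is a zeroth-order (multiplication) operator by \eqref{linearized operator}, it distributes across the sum, giving $E^{\widetilde{\mathbf L}}_{b_{(r)}}\zeta_{(r)}=\sum_i \beta_{i;r}\,E^{\widetilde{\mathbf L}}_{b_{(r)}}\zeta_i$. Adding the two contributions and subtracting $\eta$ from both sides is exactly \eqref{eqn_DQ'}.

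I do not expect any genuine analytic obstacle here; the lemma is essentially an algebraic identity, and the book-keeping is routine. The only points needing care are that (i) each $\beta_{i;r}\zeta_i$ extends by zero across the neck to a bona fide section of $\widetilde{\mathbf L}|_{b_{(r)}}$ over $\Sigma_{(r)}$, which holds because $\beta_{1;r}$ is supported in $\{s_1\le 3r/2\}$ and $\beta_{2;r}(s_1-2r)$ in $\{s_1\ge r/2\}$, both safely away from the opposite puncture; and (ii) one must use, rather than $E^{\widetilde{\mathbf L}}_{b_o}=0$, the fact that $E^{\widetilde{\mathbf L}}_{b_{(r)}}$ need \emph{not} vanish where $u_{(r)}$ differs from $u_1$ or $u_2$—this is why the term $\sum \beta_{i;r}E^{\widetilde{\mathbf L}}_{b_{(r)}}\zeta_i$ appears on the right-hand side rather than being absorbed.
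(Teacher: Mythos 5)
Your proof is correct and takes essentially the same route as the paper: both hinge on the decomposition $D^{\widetilde{\mathbf L}}|_{b_{(r)}}=\bar{\partial}_{j_o}+E^{\widetilde{\mathbf L}}_{b_{(r)}}$, the right-inverse identity $D^{\widetilde{\mathbf L}}|_{b_o}\zeta_i=\eta_i=\beta_{i;2}\eta$, and the Leibniz rule applied to $\zeta_{(r)}=\sum\beta_{i;r}\zeta_i$; the paper merely organizes the same computation by first noting the identity holds trivially on $\{|s_i|\le r/2\}$ and then restricting attention to the transition annulus. One small caveat: the parenthetical ``$E^{\widetilde{\mathbf L}}_{b_o}=0$ since $u$ is honestly $J$-holomorphic'' misattributes the reason — $D^{\widetilde{\mathbf L}}|_{b_o}=\bar{\partial}_{j_o}$ comes from the choice of the holomorphic frame $\{e_V\}$ (a nonvanishing solution of \eqref{nonvanishing solution-1}), not from holomorphicity of $u$ — but this does not affect the validity of your argument since you only use the stated fact $D^{\widetilde{\mathbf L}}|_{b_o}=\bar{\partial}_{j_o}$.
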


\n{\bf Proof:} It is obvious that
\begin{equation}\label{app_DS_right_inverse}
D^{\widetilde{\mathbf L}}|_{b_{(r)}}\circ  \left(Q^{\widetilde{\mathbf L}}_{b_{(r)}}\right)'(\eta)
=\eta\;\;\;\;\;\;for \;\;|s_{i}|\leq \tfrac{r}{2}.\end{equation}
It suffices to calculate the left hand side in the annulus $\{\frac{r}{2}\leq |s_i|\leq \frac{3r}{2}\}.$ By choosing $r$ large enough we may assume that $\{\frac{r}{2}\leq |s_i|\leq \frac{3r}{2}\}\subset \Sigma\setminus \Sigma(R_0)$.
 Note that in this annulus
 $$D^{\widetilde{\mathbf L}}|_{b_{o}}=\bar{\p}_{j_o,u},\;\;D^{\widetilde{\mathbf L}}|_{b_{o}}\zeta_i= D^{\widetilde{\mathbf L}}|_{u_i}\zeta_i=\eta_{i},\;\;\beta_{1;r} D^{\widetilde{\mathbf L}}_{u_{1}}\zeta_{1}+\beta_{2;r} D^{\widetilde{\mathbf L}}_{u_{2}}\zeta_{2}= \sum_{i=1}^{2} \beta_{i;r}\beta_{i;2} \eta.$$
By a direct calculation we get \eqref{eqn_DQ'}.
$\Box$
\begin{lemma}\label{aright_inverse_after_gluing}
$D^{\widetilde{\mathbf L}}|_{b_{(r)}}$ is surjective for $r$ large enough. Moreover, there is a right inverse $ Q^{\widetilde{\mathbf L}}_{b_{(\mathbf{r})}}$
such that
\begin{equation}
\label{right_estimate}
\left\| Q^{\widetilde{\mathbf L}}_{b_{(r)}}\right\|\leq  \mathsf{C}
\end{equation}
 for some constant $ \mathsf{C}>0$ independent of $ r $.
\end{lemma}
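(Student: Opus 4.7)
The plan is to take $\bigl(Q^{\widetilde{\mathbf L}}_{b_{(r)}}\bigr)'$ from Lemma \ref{DQ'} as an approximate right inverse and invert the error by a Neumann series. Set
$$T_r := D^{\widetilde{\mathbf L}}|_{b_{(r)}} \circ \bigl(Q^{\widetilde{\mathbf L}}_{b_{(r)}}\bigr)' - \mathrm{Id}$$
on $W^{k-1,2,\alpha}(\Sigma_{(r)};\widetilde{\mathbf L}|_{b_{(r)}}\otimes\wedge^{0,1}_{j_o}T^*\Sigma_{(r)})$. If I can show $\|T_r\|\to 0$ as $r\to\infty$, then for $r$ large $\mathrm{Id}+T_r$ is invertible with $\|(\mathrm{Id}+T_r)^{-1}\|\leq 2$, and I define
$$Q^{\widetilde{\mathbf L}}_{b_{(r)}} := \bigl(Q^{\widetilde{\mathbf L}}_{b_{(r)}}\bigr)' \circ (\mathrm{Id}+T_r)^{-1}.$$
Uniform boundedness of $\bigl(Q^{\widetilde{\mathbf L}}_{b_{(r)}}\bigr)'$ in $r$ will follow from its explicit formula, the boundedness of the fixed right inverse $Q^{\widetilde{\mathbf L}}_{b_o}$ on $\Sigma_1\wedge\Sigma_2$, and uniform $C^k$-control of the cutoffs $\beta_{i;r}$, $\beta_{i;2}$ in the weighted norm; then \eqref{right_estimate} will hold with $\mathsf{C}:=2\sup_r\|\bigl(Q^{\widetilde{\mathbf L}}_{b_{(r)}}\bigr)'\|$.

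The bulk of the work is to estimate the three error terms on the right-hand side of \eqref{eqn_DQ'}. For $\sum_i (\bar\partial \beta_{i;r})\zeta_i$: by construction $\bar\partial\beta_{i;r}$ is supported in the annulus $r/2\leq |s_i|\leq 3r/2$ and $|\nabla^\ell \bar\partial\beta_{i;r}| = O(r^{-1})$ for $\ell\leq k-1$. Since $\zeta_i\in W^{k,2,\alpha}(\Sigma_i;\widetilde{\mathbf L}|_{b_o})$, the tail integral
$$\int_{r/2\leq |s_i|\leq 3r/2} e^{2\alpha|s_i|}\sum_{\ell=0}^{k-1}|\nabla^\ell \zeta_i|^2\,dvol_{\Sigma_i}$$
tends to $0$ as $r\to\infty$, and together with the $O(r^{-1})$ gain from the cutoff derivative this gives a contribution of size $o(1)\|\eta\|_{r,k-1,2,\alpha}$. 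For $\sum_i \beta_{i;r} E^{\widetilde{\mathbf L}}_{b_{(r)}}\zeta_i$: the explicit formula \eqref{linearized operator} shows $E^{\widetilde{\mathbf L}}_{b_{(r)}}$ is linear in derivatives of $u_{(r)}$, which vanish on $|s_i|\leq r/2$ and converge uniformly with all derivatives to $0$ on the transition annuli as $r\to\infty$ (by the exponential decay of $u_i-u(q)$ recorded just before the statement of Lemma \ref{DQ'}); multiplication by this small tensor against $\zeta_i\in W^{k,2,\alpha}$ again yields $o(1)\|\eta\|$. For $\bigl(\sum_i\beta_{i;r}\beta_{i;2}-1\bigr)\eta$: the support lies in the neck region $r-1\leq |s_i|\leq 3r/2$, and the identity $\beta_{i;2}\eta=\eta_i$ together with the bound on $\sum_i\beta_{i;r}\beta_{i;2}-1$ allows me to dominate this by tail integrals of $\eta$ inside the definition of $\|\eta\|_{r,k-1,2,\alpha}$, which shrink to zero as $r$ grows.

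Combining the three estimates yields $\|T_r\|\to 0$, so the Neumann series construction produces the required uniformly bounded right inverse and proves surjectivity. The main obstacle is the first error term: the support of $\bar\partial\beta_{i;r}$ sits precisely where the exponential weight $e^{\alpha|s_i|}$ is largest (order $e^{\alpha r}$), so no pointwise estimate on $\zeta_i$ can give smallness; rather one must exploit integrability of the whole weighted $L^2$-norm of $\zeta_i$ to see that its tail on $|s_i|\geq r/2$ vanishes, and use the $O(r^{-1})$ gain from the cutoff to close the estimate. Propagating this argument to the full $W^{k-1,2,\alpha}$ norm requires tracking the Leibniz expansion of $\nabla^\ell(\bar\partial\beta_{i;r}\cdot\zeta_i)$, but each resulting term either inherits the $O(r^{-1})$ factor from a derivative of the cutoff or is absorbed into the tail of the higher-derivative weighted $L^2$-norm of $\zeta_i$, so the estimate survives term by term.
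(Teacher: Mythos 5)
Your overall strategy — taking $(Q^{\widetilde{\mathbf L}}_{b_{(r)}})'$ from Lemma \ref{DQ'} as an approximate right inverse and inverting the defect $T_r := D^{\widetilde{\mathbf L}}|_{b_{(r)}}\circ (Q^{\widetilde{\mathbf L}}_{b_{(r)}})' - \mathrm{Id}$ by a Neumann series — is exactly what the paper does, and your treatment of the first two error terms ($\bar\partial\beta_{i;r}\zeta_i$ gains $O(r^{-1})$ from the cutoff derivative; $\beta_{i;r}E^{\widetilde{\mathbf L}}_{b_{(r)}}\zeta_i$ gains an exponential factor from the decay of $du_{(r)}$ on the neck) is in the right spirit. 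However, your estimate of the third error term is wrong, and so is your conclusion that $\|T_r\|\to 0$.

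The coefficient $\sum_i\beta_{i;r}\beta_{i;2}-1$ does \emph{not} tend to zero as $r\to\infty$: its absolute value remains pinned at a fixed constant on the order of $1/2$ on a portion of the transition annulus $\{r/2\le|s_1|\le 3r/2\}$ whose contribution to $\|\eta\|_{r,k-1,2,\alpha}$ is not small for an arbitrary $\eta$ (indeed $\eta$ can be concentrated precisely there, and the weight $e^{\alpha|s|}$ is \emph{largest} in the neck, so nothing dampens this region). The paper correctly records only $\frac12\le\sum_i\beta_{i;r}\beta_{i;2}\le\sqrt{2}$, hence $|\sum_i\beta_{i;r}\beta_{i;2}-1|\le\frac12$, yielding $\|T_r\|\le \frac12+\frac{C}{r}\le\frac23$ for large $r$. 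This is not $o(1)$, but it is still $<1$, so the Neumann series still converges and $Q^{\widetilde{\mathbf L}}_{b_{(r)}}:= (Q^{\widetilde{\mathbf L}}_{b_{(r)}})'\circ(\mathrm{Id}+T_r)^{-1}$ is a uniformly bounded right inverse — just with a larger (but still $r$-independent) constant, since $\|(\mathrm{Id}+T_r)^{-1}\|\le 3$, not $2$. Your claim that the third term can be ``dominated by tail integrals of $\eta$ which shrink to zero'' is a genuine error: there is no tail here — $\eta$ lives on the glued surface $\Sigma_{(r)}$ and the neck is precisely where the defect sits — so the argument must accept a bounded but non-vanishing contribution, as the paper does.
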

\n{\bf Proof:} We first show that
\begin{eqnarray}
\label{approximate_right_inverse_estimate_1}
\left\|\left(Q^{\widetilde{\mathbf L}}_{b_{(r)}}\right)'\right\|\leq C, \\
\label{approximate_right_inverse_estimate_2}
\left\|D^{\widetilde{\mathbf L}}|_{b_{(r)}}\circ \left(Q^{\widetilde{\mathbf L}}_{b_{(r)}}\right)'-Id\right\|\leq \frac{2}{3}
\end{eqnarray}
for some constant $C>0$ independent of $ r $. Since $0\leq \beta_{i;r}\leq1 $ we have
   \begin{align}\label{h_r_o}
   |(\zeta_{(r)})_{0}| \leq &  e^{-\alpha r}\max_{t\in S^1} | e^{\alpha r}\zeta_{(r)}(r,t)| \leq  e^{-\alpha r}\max_{t_i\in S^1} \sum | e^{\alpha r}\zeta_{i}(r,t_i)|   \\ \nonumber
 \leq  & C e^{-\alpha r} \sum_{i=1,2}\|e^{\alpha |s_{i}|}\zeta_{i}(s_{i},t_{i})|_{r-1\leq  s_{1}\leq r+1}\|_{k,2}    \leq C e^{-\alpha r} \sum \|\zeta_{i}\|_{k,2,\alpha},
   \end{align}
where we used  the  Sobolev embedding theorem in the third  inequality.
By $\|Q^{\widetilde{\mathbf L}}_{b_o}\|\leq C$  and the definition of $\|\cdot\|_{k,2,\alpha,r}$ we have
\begin{align*}
\|\zeta_{(r)}\|_{k,2,\alpha,r} &=\sum\|\beta_{i;2} (\zeta_{(r)}-(\hat{ \zeta}_{(r)})_{0})\|_{k,2,\alpha}+|(\zeta_{(r)})_{0}| \\
&\leq \sum\|\beta_{i;2} \zeta_{(r)}\|_{k,2,\alpha}+C\sum \|\zeta_{i}\|_{k,2,\alpha}   \\
 &\leq
2(C+1)  \|(\zeta_{1},\zeta_{2})\|_{k,2,\alpha} \leq C \|(\eta_1,\eta_{2})\|_{k-1,2,\alpha}\leq C\|\eta\|_{k-1,2,\alpha,r},
\end{align*}
where we used \eqref{h_r_o} in the second inequality. Then \eqref{approximate_right_inverse_estimate_1} follows.
\v
We prove \eqref{approximate_right_inverse_estimate_2}. It follows from \eqref{eqn_DQ'} that
\begin{align}
\left\|D^{\widetilde{\mathbf L}}|_{b_{(r)}}\circ \left(Q^{\widetilde{\mathbf L}}_{b_{(r)}}\right)' \eta-\eta\right\|_{k-1,2,\alpha,r}
&\leq \frac{1}{2}\|\eta\|_{k-1,2,\alpha,r}+\frac{C}{r}\sum \|\zeta_i\|_{k,2,\alpha}  \\
&  \leq   \left(\frac{C}{r}+\frac{1}{2}\right)\|\eta\|_{k-1,2,\alpha,r}. \nonumber
\end{align}
where we used $ \frac{1}{2} \leq \sum \beta_{i;r}\beta_{i;2}\leq \sqrt{2},\;\left.E^{\widetilde{\mathbf{L}}}_{b_{(r)}}\right|_{\Sigma(r/2)}=0,\; \sum_{i=1}^{2}|E^{\widetilde{\mathbf{L}}}_{b_{(r)}}|\leq Ce^{-\fc\frac{r}{2}}\mbox{ in }\;\left\{\tfrac{r}{2}\leq s_1\leq \tfrac{3r}{2 }\right\}$ in the first inequality,
and used $\|Q^{\widetilde{\mathbf L}}_{b_o}\|\leq C$
in the last inequality. Then \eqref{approximate_right_inverse_estimate_2} follows when  $r $ large enough. \v
The estimate \eqref{approximate_right_inverse_estimate_2} implies that $D^{\widetilde{\mathbf L}}|_{b_{(r)}}\circ \left(Q^{\widetilde{\mathbf L}}_{b_{(r)}}\right)'$
is invertible, and a right inverse $ Q^{\widetilde{\mathbf L}}_{b_{(r)}}$  of $D^{\widetilde{\mathbf L}}|_{b_{(r)}}$ is given by
 \begin{equation}
 \label{express_right_inverse-1}
 Q^{\widetilde{\mathbf L}}_{b_{(r)}}= \left(Q^{\widetilde{\mathbf L}}_{b_{(r)}}\right)'\left[ D^{\widetilde{\mathbf L}}|_{b_{(r)}}\circ  \left(Q^{\widetilde{\mathbf L}}_{b_{(r)}}\right)'\right]\inv.
 \end{equation}
Then the Lemma follows.  $\Box$
\v
\v
For any $\zeta+\hat \zeta_{0}\in \ker D^{\widetilde{\mathbf L}}|_{b_{o}},$
we set
\begin{equation}
 \zeta_{(r)}=\beta_{1;r}(s_1)\zeta_{1}(s_1,t_1)+\beta_{2;r}(s_{1}-2r)\zeta_{2}(s_{1}-2r,t_{1}-\tau)+\hat \zeta_{0},
\end{equation}
Define $I^{\widetilde{\mathbf L}}_{(r)}:\ker D^{\widetilde{\mathbf L}}|_{b_{o}}
\to \ker D^{\widetilde{\mathbf L}}|_{b_{(r)}}$ by
\begin{equation}\label{ker-iso}
I^{\widetilde{\mathbf L}}_{(r)}(\zeta+\hat \zeta_{0})=\zeta_{(r)}- Q^{\widetilde{\mathbf L}}_{b_{(r)}}\circ D^{\widetilde{\mathbf L}}|_{b_{(r)}}(\zeta_{(r)}).
\end{equation}
\begin{lemma}\label{lem_est_I_r}
$I^{\widetilde{\mathbf{L}}}_{(r)}: \ker D^{\widetilde{\mathbf L}}|_{b_{o}}\longrightarrow \ker D^{\widetilde{\mathbf L}}|_{b_{(r)}}$
is an isomorphism for $r$ large enough, and
$$\|I^{\widetilde{\mathbf L}}_{(r)}\|\leq \mathsf C$$
for some constant $\mathsf C>0$ independent of $r$.
\end{lemma}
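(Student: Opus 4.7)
The plan is to follow the same blueprint used for the right inverse in Lemma \ref{aright_inverse_after_gluing}: first verify that $I^{\widetilde{\mathbf L}}_{(r)}$ lands in the correct kernel and satisfies an upper norm bound, then separately argue injectivity via a lower bound on $\|\zeta_{(r)}\|$, and finally upgrade to an isomorphism by a dimension count.

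First I would observe that $I^{\widetilde{\mathbf L}}_{(r)}(\zeta+\hat{\zeta}_0)\in\ker D^{\widetilde{\mathbf L}}|_{b_{(r)}}$ is immediate from $D^{\widetilde{\mathbf L}}|_{b_{(r)}}\circ Q^{\widetilde{\mathbf L}}_{b_{(r)}}=\mathrm{Id}$. Next, the core estimate is to control $\|D^{\widetilde{\mathbf L}}|_{b_{(r)}}(\zeta_{(r)})\|_{r,k-1,2,\alpha}$. Since each $\zeta_i$ lies in $\ker D^{\widetilde{\mathbf L}}|_{b_o}$, applying $D^{\widetilde{\mathbf L}}|_{b_{(r)}}=\bar\partial_{j_o}+E^{\widetilde{\mathbf L}}_{b_{(r)}}$ to $\zeta_{(r)}$ produces only the commutator terms
\[
D^{\widetilde{\mathbf L}}|_{b_{(r)}}\zeta_{(r)}=\sum_i (\bar\partial\beta_{i;r})\zeta_i+\sum_i \beta_{i;r}E^{\widetilde{\mathbf L}}_{b_{(r)}}\zeta_i+D^{\widetilde{\mathbf L}}|_{b_{(r)}}\hat{\zeta}_0,
\]
all of which are supported in the neck $\{\tfrac{r}{2}\le |s_i|\le\tfrac{3r}{2}\}$. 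The cut-off derivative $\bar\partial\beta_{i;r}$ is $O(1/r)$, $\zeta_i-\hat{\zeta}_0$ decays like $e^{-|s_i|}$ away from the puncture (since it is a bounded holomorphic section with removable singularity in the pre-glued setup), and $E^{\widetilde{\mathbf L}}_{b_{(r)}}$ decays exponentially in $r$ by \eqref{gluing surface}; together these give $\|D^{\widetilde{\mathbf L}}|_{b_{(r)}}\zeta_{(r)}\|_{r,k-1,2,\alpha}\le C e^{-\mathfrak{c}r/2}\,\|\zeta+\hat{\zeta}_0\|_{\mathcal W,j_o,k,2,\alpha}$.

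Combined with the uniform bound $\|Q^{\widetilde{\mathbf L}}_{b_{(r)}}\|\le\mathsf{C}$ of Lemma \ref{aright_inverse_after_gluing} and the straightforward inequality $\|\zeta_{(r)}\|_{r,k,2,\alpha}\le C\|\zeta+\hat{\zeta}_0\|_{\mathcal W,j_o,k,2,\alpha}$ (obtained in the same way as the estimate of $(Q^{\widetilde{\mathbf L}}_{b_{(r)}})'$, using the definitions of the weighted norms together with \eqref{h_r_o}), this yields the desired $\|I^{\widetilde{\mathbf L}}_{(r)}\|\le\mathsf{C}$. Injectivity then follows from the reverse inequality: a similar argument gives a uniform lower bound $\|\zeta_{(r)}\|_{r,k,2,\alpha}\ge c\,\|\zeta+\hat{\zeta}_0\|_{\mathcal W,j_o,k,2,\alpha}$ for $r$ large (since restricting $\zeta_{(r)}$ back to each $\Sigma_i(r/2)$ reproduces $\zeta_i$ and the constant piece is preserved), so if $I^{\widetilde{\mathbf L}}_{(r)}(\zeta+\hat{\zeta}_0)=0$ then
\[
c\,\|\zeta+\hat{\zeta}_0\|\le\|\zeta_{(r)}\|\le\|Q^{\widetilde{\mathbf L}}_{b_{(r)}}\|\cdot\|D^{\widetilde{\mathbf L}}|_{b_{(r)}}\zeta_{(r)}\|\le \mathsf{C}\,e^{-\mathfrak{c}r/2}\|\zeta+\hat{\zeta}_0\|,
\]
forcing $\zeta+\hat{\zeta}_0=0$ for $r$ large. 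Finally, to upgrade injectivity to an isomorphism I would invoke the fact that both $D^{\widetilde{\mathbf L}}|_{b_o}$ and $D^{\widetilde{\mathbf L}}|_{b_{(r)}}$ are Fredholm with vanishing cokernel (from $H^1(\Sigma,\widetilde{\mathbf L}|_b)=0$, preserved under gluing since $\widetilde{\mathbf L}|_{b_{(r)}}$ remains very ample), hence their kernels have the same dimension by the Riemann--Roch index formula, which is a topological invariant of the gluing.

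The main obstacle is the exponential-decay estimate on $\|D^{\widetilde{\mathbf L}}|_{b_{(r)}}\zeta_{(r)}\|_{r,k-1,2,\alpha}$: one must combine the $O(1/r)$ bound on $\bar\partial\beta_{i;r}$ with the exponential decay of $\zeta_i-\hat{\zeta}_0$ on the cylindrical ends (which itself requires that holomorphic sections of $\widetilde{\mathbf L}|_{b_o}$ extend continuously across the node to the constant value $\hat\zeta_0$) and absorb everything into the weighted norm with weight $e^{\alpha|s|}$; the competition between the weight and the decay is exactly what ensures the estimate closes and is the delicate point of the argument.
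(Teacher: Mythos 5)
Your proposal follows essentially the same route as the paper's proof: decompose $D^{\widetilde{\mathbf L}}|_{b_{(r)}}\zeta_{(r)}$ into cut-off and remainder terms supported in the neck, bound it to get $\|I^{\widetilde{\mathbf L}}_{(r)}\|\le \mathsf C$, prove injectivity by comparing the resulting smallness of $\|\zeta_{(r)}\|_{r,k,2,\alpha}$ with a uniform lower bound coming from concentration of $\zeta$ in a compact region (the paper invokes Lemma~\ref{tube_ker_c-1} for this), and finish with the dimension count $\dim H^0(\Sigma,\widetilde{\mathbf L}|_{b_o})=\dim H^0(\Sigma_{(r)},\widetilde{\mathbf L}|_{b_{(r)}})$. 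One small point: you frame the exponential-decay estimate $\|D^{\widetilde{\mathbf L}}|_{b_{(r)}}\zeta_{(r)}\|\lesssim e^{-\mathfrak{c}r/2}$ as the delicate heart of the argument, but the paper only derives (and only needs) the much cheaper bound $\tfrac{C}{r}(\|\zeta\|_{k,2,\alpha}+|\zeta_0|)$, obtained directly from $|\bar\partial\beta_{i;r}|=O(1/r)$ together with the support structure; the exponential refinement, while plausible from the decay of holomorphic sections in the weighted space, is unnecessary overhead here.
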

\begin{proof} Let $\zeta+\hat \zeta_{0}\in \ker D^{\widetilde{\mathbf L}}|_{b_{o}}$ with $I^{\widetilde{\mathbf{L}}}_{(r)}(\zeta+\hat{\zeta}_{0})=0$.
By \eqref{ker-iso}  and \eqref{right_estimate}, we have
$$\|\zeta_{(r)}\|_{k,2,\alpha,r} =\left\|I^{\widetilde{\mathbf L}}_{(r)}(\zeta+\hat{\zeta}_{0}) -  \zeta_{(r)}\right\|_{k,2,\alpha,r}\leq C\|D^{\widetilde{\mathbf L}}|_{b_{(r)}}
\left(\zeta_{(r)}\right)\|$$
for some constant $C>0.$ A direct culculation gives us
\begin{equation}\label{eqn_Dh_r}
D^{\widetilde{\mathbf L}}|_{b_{(r)}}(\zeta_{(r)})
=\sum_{i=1}^2 \beta_{i;r}\bar{\partial}_{j_{o}}(\zeta+\hat \zeta_{0})
+\sum_{i=1}^2 (\bar{\partial}\beta_{i;r}) \zeta_{i} +
\sum\beta_{i;r}E^{\widetilde{\mathbf{L}}}_{b_{(r)}}\zeta_{i}+E^{\widetilde{\mathbf{L}}}_{b_{(r)}}\hat \zeta_{0}.
\end{equation}
Since $E^{\widetilde{\mathbf{L}}}_{b_{(r)}}|_{\Sigma(r/2)}=E^{\widetilde{\mathbf{L}}}_{b_{o}}|_{\Sigma(r/2)},$ by $D^{\widetilde{\mathbf L}}|_{b_{o}}(\zeta+\hat \zeta_{0})=\bar{\partial}_{j_{o}}(\zeta+\hat \zeta_{0})=0,$ we have
\begin{equation}\label{delta_I}
\|\zeta_{(r)}\|_{k,2,\alpha,r}\leq \frac{C}{r}
(\|\zeta\|_{k,2,\alpha} + |\zeta_0|)\end{equation} for some constant $C>0$.
\v
Let $ \epsilon'\in (0,1)$ be a constant. By Lemma \ref{tube_ker_c-1}  we can choose $R$ large enough such that
$$\|\zeta|_{|s_i|\geq 2R}\|_{k,2,\alpha}\leq \epsilon'(\|\zeta\|_{k,2,\alpha}+|\zeta_0|).$$
Therefore
\begin{equation}\label{lower_bound_h_r}
\|\zeta_{(r)}\|_{k,2,\alpha,r}\geq \|\zeta|_{|s_i|\leq 2R_1}\|_{k,2,\alpha} + |\zeta_0|
\geq (1 - \epsilon')(\|\zeta\|_{k,2,\alpha} + |\zeta_0|),
\end{equation}
for $r>4R$.  Then \eqref{delta_I} and \eqref{lower_bound_h_r} give us $\zeta=0$ and $\zeta_{0}=0$. Hence $I^{\widetilde{\mathbf{L}}}_{(r)}$ is injective.
\vskip 0.1in
\noindent
Since $H^0(\Sigma, \widetilde{\mathbf{L}}\mid_{b})$ and $H^0(\Sigma, \widetilde{\mathbf{L}}\mid_{b_{(r)}})$ have the same dimension, the Lemma follows.
\end{proof}
\v

\subsection{ Equivariant Gluing}\label{Equivariant Gluing}
Let $b_o$ be as in \S\ref{s_intro_2} and \S\ref{s_intro_3}.
Assume that $(\Sigma_i,\mathbf y_i, q)$ is stable. Let $G_{b_o}=(G_{b_{o1}}, G_{b_{o2}})$ be the isotropy group at $b_o$, thus,
$$G_{b_{o}}=\{\phi=(\phi_1,\phi_2)|\;\phi_i\in Diff^+(\Sigma_i),\; \phi_i^{*}(j_i,\mathbf{y}_i, q,
u_{i})=(j_i,\mathbf{y}_i,q,
u_i)\}.$$
 Obviously,  $G_{b_o}$ is a subgroup of $G_{\mathbf{a}_{o}}$.
\v\n
It is easy to check that  the operator  $D^{\widetilde{\mathbf{L}}}|_{b_o}$ is $G_{b_{o}}$-equivariant. Then we may choose a $G_{b_{o}}$-equivariant right inverse $Q^{\widetilde{\mathbf{L}}}_{b_o}$. $G_{b_{o}}$ acts on $ker D^{\widetilde{\mathbf{L}}}|_{b_o}$ in a natural way. Put
$$ker\; D^{\mathbf{L}}|_{b_o}=ker D^{\widetilde{\mathbf{L}}}|_{b_o}/G_{b_{o}}.$$

Note that we used the cusp holomorphic cylinder coordinates $(s_i, t_i)$ on $\Sigma_i$ near $q$ to do gluing in \S\ref{s_intro_2} and \S\ref{s_intro_3}.  Since the cut-off function $\beta (s)$
depends only on $s$, $G_{b_o}$ acts on $\widetilde{O}_{b_{o}}(\delta,\rho).$
\v
Denote $a_{(r)}=(\Sigma_{(r)},j,y)$ and $b_{(r)}=(a_{(r)},u_{(r)}).$
Denote by $G_{b_{(r)}}$ the isotropy group at $b_{(r)}$. $G_{b_{(r)}}$ acts on $ker\;D^{\widetilde{\mathbf{L}}}\mid_{b_{(r)}}$ in a natural way. Put
$$ker\; D^{\mathbf{L}}|_{b_{(r)}}=ker\; D^{\widetilde{\mathbf{L}}}|_{b_{(r)}}/G_{b_{(r)}}.$$

It is easy to see that $G_{b_{(r)}}$ is a subgroup of $G_{b_{o}}$ and can be seen as rotation in the gluing part. Then the gluing map is a $\frac{|G_{b_{o}}|}{|G_{b_{(r)}}|}$-multiple covering map.
 Since $\beta_{1;r}$ is independent of $\tau$,  $Q'_{b_{(r)}}$ is  $G_{b_{(r)})}$-equivariant.
 By the definition of $Q^{\widetilde{\mathbf{L}}}_{b_{(r)}}$ and the $G_{b_{(r)}}$-equivarance of $D^{\widetilde{\mathbf{L}}}|_{b_{(r)}}$, we conclude that  $Q^{\widetilde{\mathbf{L}}}_{b_{(r)}}$ is  $G_{b_{(r)}}$-equivariant. By the uniqueness, $f^{\widetilde{\mathbf{L}}}$ is $G_{b_{(r)}}$-equivariant. Then we have

\begin{lemma}\label{equi_isomorphism}
{\bf (1)} $I^{\widetilde{\mathbf{L}}}_{(r)}: \ker D^{\widetilde{\mathbf L}}|_{b_{o}}\longrightarrow \ker D^{\widetilde{\mathbf L}}|_{b_{(r)}}$ is a $\frac{|G_{b_{o}}|}{|G_{b_{(r)}}|}$-multiple covering map.
\v
{\bf (2)} $I^{\widetilde{\mathbf{L}}}_{(r)}$ induces a isomorphism $I^{\mathbf{L}}_{(r)}: \ker D^{\mathbf L}|_{b_{o}}\longrightarrow \ker D^{\mathbf L}|_{b_{(r)}}$.
\end{lemma}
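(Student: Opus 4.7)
The plan is to trace through how the isotropy groups $G_{b_o}$ and $G_{b_{(r)}}$ act on gluing parameters and on each ingredient entering the construction of $I^{\widetilde{\mathbf L}}_{(r)}$. The key observation is that any $\phi=(\phi_1,\phi_2)\in G_{b_o}$ fixes each component $(j_i,\mathbf{y}_i,q,u_i)$, so near the node $q_i$ each $\phi_i$ restricts to a holomorphic automorphism fixing $q_i$, which in the cusp cylindrical coordinates $(s_i,t_i)$ must be a rotation $t_i\mapsto t_i+\theta_i$. Since the pregluing cut-offs $\beta_{i;r}$ and $\beta$ depend only on $s$, the induced action of $\phi$ on the pregluing data preserves $r$ but shifts $\tau\mapsto \tau+(\theta_1-\theta_2)$. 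The subgroup of $G_{b_o}$ stabilizing a given $(r,\tau)$ is therefore precisely $G_{b_{(r)}}$, so the $G_{b_o}$-orbit of each gluing parameter has cardinality $|G_{b_o}|/|G_{b_{(r)}}|$, and the corresponding glued surfaces $\Sigma_{(r,\tau)}$ in a single orbit are mutually isomorphic via the induced $\phi$.

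For part (1), I would verify the $G_{b_{(r)}}$-equivariance of every object entering \eqref{ker-iso}: first, the approximate right inverse $(Q^{\widetilde{\mathbf L}}_{b_{(r)}})'$ is $G_{b_{(r)}}$-equivariant because $Q^{\widetilde{\mathbf L}}_{b_o}$ can be chosen $G_{b_o}$-equivariant by the averaging argument of Remark \ref{isotropy group}, and because $\beta_{i;r}$ are $\tau$-independent while $G_{b_{(r)}}$ acts by rotations in $t$; second, $D^{\widetilde{\mathbf L}}|_{b_{(r)}}$ is $G_{b_{(r)}}$-equivariant by Lemma \ref{equi_diff}, hence so is the genuine right inverse $Q^{\widetilde{\mathbf L}}_{b_{(r)}}$ produced by the Neumann-type formula \eqref{express_right_inverse-1}; third, combining these in \eqref{ker-iso} shows that $I^{\widetilde{\mathbf L}}_{(r)}$ intertwines the $G_{b_{(r)}}$-actions on source and target. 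At a fixed gluing parameter Lemma \ref{lem_est_I_r} already gives a linear isomorphism; the ``$|G_{b_o}|/|G_{b_{(r)}}|$-multiple covering'' statement records that as $(r,\tau)$ varies over a $G_{b_o}$-orbit the family of these isomorphisms covers the corresponding fiber of the quotient $|G_{b_o}|/|G_{b_{(r)}}|$ times.

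For part (2), the $G_{b_{(r)}}$-equivariance just established implies $I^{\widetilde{\mathbf L}}_{(r)}$ descends to a map of $G_{b_{(r)}}$-quotients. Further modding out the source by $G_{b_o}/G_{b_{(r)}}$ is then matched on the target side by identifying the $\Sigma_{(r,\tau')}$ lying in a common $G_{b_o}$-orbit: for $\phi\in G_{b_o}\setminus G_{b_{(r)}}$ and the induced isomorphism $\phi_*:\ker D^{\widetilde{\mathbf L}}|_{b_{(r,\tau)}}\to \ker D^{\widetilde{\mathbf L}}|_{b_{(r,\tau')}}$, I would check that $\phi_*\circ I^{\widetilde{\mathbf L}}_{(r,\tau)}=I^{\widetilde{\mathbf L}}_{(r,\tau')}\circ\phi_*$ by using again the $s$-only dependence of the cut-offs together with Lemma \ref{equi_diff}. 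Hence the induced map
\[
I^{\mathbf L}_{(r)}:\ker D^{\mathbf L}|_{b_o}\longrightarrow \ker D^{\mathbf L}|_{b_{(r)}}
\]
is well-defined, and it is an isomorphism because $I^{\widetilde{\mathbf L}}_{(r)}$ is one at the total-space level by Lemma \ref{lem_est_I_r} and the multi-cover degrees on source and target cancel.

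The main obstacle is the equivariance bookkeeping in steps (i)--(iii) of part (1): one must use crucially that the cut-offs $\beta_{i;r}$ are functions of $s$ alone, so that the rotations $t_i\mapsto t_i+\theta_i$ that make up $G_{b_{(r)}}$ leave them invariant, and one must handle the averaging in Remark \ref{isotropy group} so that $Q^{\widetilde{\mathbf L}}_{b_o}$, and hence $Q^{\widetilde{\mathbf L}}_{b_{(r)}}$ through \eqref{express_right_inverse-1}, respects the relevant subgroup. Once this is in place, the covering and isomorphism statements reduce to counting cosets of $G_{b_{(r)}}$ in $G_{b_o}$.
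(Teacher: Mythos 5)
Your proposal follows essentially the same route as the paper's own argument in \S\ref{Equivariant Gluing}: use the $s$-only dependence of the cut-offs to see that $G_{b_{(r)}}$ consists of rotations and is a subgroup of $G_{b_o}$, trace $G_{b_{(r)}}$-equivariance through $(Q^{\widetilde{\mathbf L}}_{b_{(r)}})'$, $D^{\widetilde{\mathbf L}}|_{b_{(r)}}$, and hence $Q^{\widetilde{\mathbf L}}_{b_{(r)}}$ via \eqref{express_right_inverse-1} (with the averaging of Remark~\ref{isotropy group} on the $b_o$ side), and then descend to the $G$-quotients and count cosets. Your unpacking of the ``multiple covering'' statement as counting the $G_{b_o}$-orbit of $(r,\tau)$ is a reasonable reading of what the paper leaves implicit, and the equivariance bookkeeping you flag is exactly the content of the paper's short proof.
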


\subsection{\bf Pregluing several nodes }\label{s_intro_3}

The above estimates can be generalized to gluing several nodes. Let $(\Sigma, j, {\bf y})$ be a marked nodal Riemann surface of genus $g$ with $n$ marked points. Suppose that $\Sigma$ has $\mathfrak{e}$ nodal points $\mathbf{q}=(q_{1},\cdots,q_{\mathfrak{e}})$ and $\iota$ smooth components. For each node $q_i$ we can glue $\Sigma$ and $u$ at $q_i$ with gluing parameters $(\mathbf{r})=((r_1,\tau_1),...,(r_\mathfrak{e}, \tau_\mathfrak{e}))$ to get $\Sigma_{(\mathbf{r})}$ and $u_{(\mathbf{r})}$.
The operators  $D^{\widetilde{\mathbf{L}}}_{b_o}$ and $D^{\widetilde{\mathbf{L}}}_{b_{(\mathbf{r})}}$ are $G_{b_{o}}$-equivariant and $G_{b_{(\mathbf{r})}}$-equivariant respectively. We may choose a $G_{b_{o}}$-equivariant right inverse $Q^{\widetilde{\mathbf{L}}}_{b_o}$ and $G_{b_{(\mathbf{r})}}$-equivariant right inverse $Q^{\widetilde{\mathbf{L}}}_{b_{(\mathbf{r})}}$. $G_{b_{o}}$ ( resp.$G_{b_{(\mathbf{r})}}$ ) acts on $ker D^{\widetilde{\mathbf{L}}}_{b_o}$ ( resp. $ker D^{\widetilde{\mathbf{L}}}_{b_{(\mathbf{r})}}$ )  in a natural way. Put
$$ker\; D^{\mathbf{L}}|_{b_o}=ker D^{\widetilde{\mathbf{L}}}|_{b_o}/G_{b_{o}},\;\;
ker\; D^{\mathbf{L}}|_{b_{(\mathbf{r})}}=ker D^{\widetilde{\mathbf{L}}}|_{b_{(\mathbf{r})}}/G_{b_{(\mathbf{r})}}.
$$
By the same methods as in \S \ref{s_intro_2}, \S\ref{s_intro_3} and \S\ref{Equivariant Gluing}
we can prove

\begin{lemma}\label{lem_est_I_r} {\bf (1)}
$I^{\widetilde{\mathbf L}}_{(\mathbf{r})}: \ker D^{\widetilde{\mathbf L}}|_{b_{o}}\longrightarrow \ker D^{\widetilde{\mathbf L}}|_{b_{(\mathbf{r})}}$
is a $\frac{|G_{b_{o}}|}{|G_{b_{(\mathbf{r})}}|}$-multiple covering map for $r_i$, $1\leq i \leq \mathfrak{e}$, large enough, and
$\|I^{\widetilde{\mathbf L}}_{(\mathbf{r})}\|\leq \mathsf C$
for some constant $\mathsf C>0$ independent of $(\mathbf{r})$.
\v
{\bf (2)} $I^{\widetilde{\mathbf{L}}}_{(\mathbf{r})}$ induces a isomorphism $I^{\mathbf{L}}_{(\mathbf{r})}: \ker D^{\mathbf L}|_{b_{o}}\longrightarrow \ker D^{\mathbf L}|_{b_{(\mathbf{r})}}$.
\end{lemma}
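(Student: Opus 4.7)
The plan is to adapt the single-node argument (Lemmas \ref{DQ'}, \ref{aright_inverse_after_gluing}, \ref{lem_est_I_r}, \ref{equi_isomorphism}) component-wise over the $\mathfrak e$ nodes. First I would set up localised cut-off functions around each pair $(q_{i,1},q_{i,2})$ of preimages of $q_i$, using coordinates $(s_i,t_i)$ and parameters $(r_i,\tau_i)$. Since the cylindrical neighborhoods of different nodes are disjoint for $\Sigma$ stable, the analytic analysis decouples: pick smooth cut-offs $\beta_{i;r_i,\ell}$, $\ell=1,2$, supported in the $i$-th neck, and define weighted norms $\|\cdot\|_{(\mathbf r),k,2,\alpha}$ by summing the contributions from the truncated pieces over all smooth components, together with the constants $\zeta_0^{(i)}\in \widetilde{\mathbf L}\mid_{b_o}(q_i)$. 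The approximate right inverse $(Q^{\widetilde{\mathbf L}}_{b_{(\mathbf r)}})'$ is defined by first applying $Q^{\widetilde{\mathbf L}}_{b_o}$ to the two node-localised pieces of $\eta$ and then gluing with the $\beta_{i;r_i,\ell}$'s, exactly as in the one-node case but summed over $i$.

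Next I would establish the two analogues of \eqref{approximate_right_inverse_estimate_1} and \eqref{approximate_right_inverse_estimate_2}. The uniform bound $\|(Q^{\widetilde{\mathbf L}}_{b_{(\mathbf r)}})'\|\leq C$ follows by applying the single-node Sobolev bound on each neck and summing. The error estimate splits into a sum of contributions, one per node, each of the form appearing on the right-hand side of Lemma \ref{DQ'}; each is controlled by $C/r_i + Ce^{-\fc r_i/2} + \frac{1}{2}$, so for $\min_i r_i$ large enough the total error has operator norm $\leq 2/3$. Then $Q^{\widetilde{\mathbf L}}_{b_{(\mathbf r)}}$ is obtained by the Neumann series as in \eqref{express_right_inverse-1}. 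Surjectivity of $D^{\widetilde{\mathbf L}}|_{b_{(\mathbf r)}}$ and the uniform bound \eqref{right_estimate} follow immediately.

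With $Q^{\widetilde{\mathbf L}}_{b_{(\mathbf r)}}$ in hand, I define $I^{\widetilde{\mathbf L}}_{(\mathbf r)}$ as in \eqref{ker-iso}: glue a kernel element $\zeta+\sum_i \hat\zeta_0^{(i)}$ using all the node cut-offs to produce $\zeta_{(\mathbf r)}$, then correct by $-Q^{\widetilde{\mathbf L}}_{b_{(\mathbf r)}}D^{\widetilde{\mathbf L}}|_{b_{(\mathbf r)}}\zeta_{(\mathbf r)}$. Injectivity follows from two opposing estimates, exactly as in Lemma \ref{lem_est_I_r}: on one hand $\|D^{\widetilde{\mathbf L}}|_{b_{(\mathbf r)}}\zeta_{(\mathbf r)}\|\leq \frac{C}{\min_i r_i}\bigl(\|\zeta\|_{k,2,\alpha}+\sum_i|\zeta_0^{(i)}|\bigr)$, obtained by reproducing the decomposition \eqref{eqn_Dh_r} over each node and using $D^{\widetilde{\mathbf L}}|_{b_o}(\zeta+\sum\hat\zeta_0^{(i)})=0$; on the other hand, applying the cylindrical decay estimate (the multi-node version of Lemma \ref{tube_ker_c-1}) to each neck gives $\|\zeta_{(\mathbf r)}\|_{(\mathbf r),k,2,\alpha}\geq (1-\epsilon')\bigl(\|\zeta\|+\sum|\zeta_0^{(i)}|\bigr)$ when all $r_i\geq 4R$. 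These together force $\zeta=0$ and all $\zeta_0^{(i)}=0$. Surjectivity is then dimension counting: both kernels have the same dimension because $H^1$ vanishes for both $\widetilde{\mathbf L}\mid_{b_o}$ and $\widetilde{\mathbf L}\mid_{b_{(\mathbf r)}}$ (by ampleness after taking powers as in \S\ref{s_intro_2}) and the Fredholm index is deformation-invariant.

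Finally, for (1) the $\frac{|G_{b_o}|}{|G_{b_{(\mathbf r)}}|}$-multiple covering assertion and (2) the descent to an isomorphism of quotients come from the equivariance discussion of \S\ref{Equivariant Gluing} applied node-by-node. An element $\phi\in G_{b_o}$ acts on each neck as a rotation in $t_i$, and only those $\phi$ whose rotation angle at node $i$ equals an integer multiple of the gluing twist $\tau_i$ modulo the deck action extend to $\Sigma_{(\mathbf r)}$; these form $G_{b_{(\mathbf r)}}$. Since $Q^{\widetilde{\mathbf L}}_{b_{(\mathbf r)}}$ and the cut-offs $\beta_{i;r_i,\ell}(s_i)$ are $G_{b_{(\mathbf r)}}$-equivariant, $I^{\widetilde{\mathbf L}}_{(\mathbf r)}$ is $G_{b_{(\mathbf r)}}$-equivariant, and the coset $G_{b_o}/G_{b_{(\mathbf r)}}$ permutes preimages in $\ker D^{\widetilde{\mathbf L}}|_{b_o}$ giving the covering degree; passing to quotients yields the isomorphism $I^{\mathbf L}_{(\mathbf r)}$. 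The main obstacle I anticipate is bookkeeping the $G_{b_o}$-versus-$G_{b_{(\mathbf r)}}$ action on the kernel together with the fact that different choices of $\tau_i$ on the same $(r_i)$-family produce biholomorphic glued surfaces — making rigorous the precise subgroup identification that yields the correct covering number.
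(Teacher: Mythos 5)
Your proposal follows essentially the same route the paper takes: the paper's proof of this multi-node lemma is a one-line appeal back to the one-node constructions and arguments of \S4.2--4.3, and your component-wise repetition of the cut-off/approximate-right-inverse/opposing-estimates/dimension-count/equivariance scheme over each disjoint neck is exactly that argument filled in. The only point the paper leaves equally implicit is the equality of dimensions of $H^0$ before and after gluing, for which your Riemann--Roch/deformation-invariance remark is the standard justification; your closing caveat about the $G_{b_o}$ versus $G_{b_{(\mathbf r)}}$ bookkeeping is likewise exactly the point the paper handles by the rotation description in \S4.3.
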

\v\v

For fixed $(\mathbf{r})$ we consider the family of maps:
$$
\mathcal {F}_{(\mathbf{r})}: \mathbf{A}\times   W^{k,2,\alpha}(\Sigma_{(\mathbf{r})},u^{\star}_{(\mathbf{r})}TM)
\times {\mathcal W}^{k,2,\alpha}(\Sigma_{(\mathbf{r})},\widetilde{\mathbf L}|_{b_{(\mathbf{r})}})\to W^{k-1,2,\alpha}(\Sigma_{(\mathbf{r})},\wedge^{0,1}T\Sigma_{(\mathbf{r})}\otimes \widetilde{\mathbf L}|_{b_{(\mathbf{r})}})$$
defined by
\begin{equation}\label{def_F_r}
\mathcal {F}_{(\mathbf{r})}(\mathbf{s},h,\xi)= P^{\widetilde{\mathbf{L}}}_{b,b_{(\mathbf{r})}}
\circ D^{\widetilde{\mathbf L}}|_{b}\circ (P^{\widetilde{\mathbf{L}}}_{b,b_{(\mathbf{r})}})^{-1}\xi,
\end{equation}
where $b=((\mathbf r),\mathbf{s},v_{\mathbf r})$ and $v_{\mathbf r}=\exp_{u_{(\mathbf r)}}h$.
By implicit function theorem (Theorem \ref{details_implicit_function_theorem}, Theorem \ref{smooth_implicit_function_theorem}),
there exist $\delta>0$, $\rho>0$ and a small neighborhood $\widetilde{O}_{(\mathbf{r})}$ of $0 \in \ker\;D^{\widetilde{\mathbf L}}|_{u_{(\mathbf{r})}}$ and a unique smooth map
$$f^{\widetilde{\mathbf L}}_{(\mathbf{r})}: \widetilde{\mathbf{O}}_{b_{(\mathbf{r})}}(\delta,\rho)
\times \widetilde{O}_{(\mathbf{r})}\rightarrow W^{k-1,2,\alpha}(\Sigma_{(\mathbf{r})},\wedge^{0,1}T\Sigma_{(\mathbf{r})}\otimes \widetilde{\mathbf L}|_{b_{(\mathbf{r})}})$$ such that for any $(b,\zeta)\in \widetilde{\mathbf{O}}_{b_{(\mathbf{r})}}(\delta,\rho)
\times \widetilde{O}_{(\mathbf{r})}$
\begin{equation}\label{glu_solu}
D^{\widetilde{\mathbf L}}|_{b}\circ(P^{\widetilde{\mathbf{L}}}_{b,b_{(\mathbf{r})}})^{-1}\left(\zeta + Q^{\widetilde{\mathbf L}}_{b_{(\mathbf{r})}}\circ f^{\widetilde{\mathbf L}}_{\mathbf{s},h,(\mathbf{r})}(\zeta)\right)=0.
\end{equation}
Together with Lemma \ref{lem_est_I_r} and $I^{\mathbf L}_{(\mathbf{r})}$ we have gluing map
$$Glu^{\mathbf L}_{(\mathbf r)}:\mathbf{F}\mid_{[b_o]}\to \mathbf{F}\mid_{[b]}\;\;\;for\; any \;b\in \;\mathbf{O}_{[b_{(\mathbf{r})}]}(\delta,\rho)$$ defined\;by
$$Glu^{\mathbf L}_{(\mathbf r)}([\zeta]):=\left[(P^{\widetilde{\mathbf{L}}}_{b,b_{(\mathbf{r})}})^{-1}\left(I^{\widetilde{\mathbf L}}_{(\mathbf{r})}\zeta + Q^{\widetilde{\mathbf L}}_{b_{(\mathbf{r})}}\circ f^{\widetilde{\mathbf L}}_{\mathbf{s},h,(\mathbf{r})}I^{\widetilde{\mathbf L}}_{(\mathbf{r})}\zeta \right)\right],\;\;\;\;\forall [\zeta]\in \mathbf{F}\mid_{[b_o]}.$$
\v
Given a frame $e_{\alpha}(z)$ on $\widetilde{\mathbf{F}}\mid_{b_o}$, $1\leq \alpha\leq rank\; \widetilde{\mathbf{F}},$ as Remark \ref{isotropy group} we have a $G_{b_o}$-equivariant frame field
$$e_{\alpha}((\mathbf{r}),\mathbf{s}, h)(z)=(P^{\widetilde{\mathbf{L}}}_{b,b_{(\mathbf{r})}})^{-1}
\left(I^{\widetilde{\mathbf L}}_{(\mathbf{r})}e_{\alpha}  + Q^{\widetilde{\mathbf L}}_{b_{(\mathbf{r})}}\circ f^{\widetilde{\mathbf L}}_{\mathbf{s},h,(\mathbf{r})}I^{\widetilde{\mathbf L}}_{(\mathbf{r})}e_{\alpha} \right)(z) $$
over $D_{R_{0}}^{*}(0)\times \widetilde{\mathbf{O}}_{b_o}(\delta_{o},\rho_{o})$,
where $z$ is the coordinate on $\Sigma$, and
$$D_{R_{0}}^{*}(0):=\bigoplus_{i=1}^{\mathfrak{e}}\left\{(r, \tau)\mid R_0< r<\infty, \;\tau\in S^1\right\}.$$
For any fixed $(\mathbf{r})$, $e_{\alpha}$ is smooth with respect to $\mathbf{s},h$ over $\widetilde{\mathbf{O}}_{b_o}(\delta_{o},\rho_{o})$.
\v

\subsection{Gluing $J$-holomorphic maps}\label{J-gluing map}

We recall some results in \cite{LS-1}.
Let $b_o=(a_{o}, u)$, and $u$ be a $(j_{o},J)$-holomorphic map. The domain $\Sigma $ of elements of $\mathcal{M}^{\Gamma}$ are marked nodal Riemann surfaces. Suppose that $\Sigma$
has nodes $p_{1},\cdots,p_{\mathfrak{e}}$ and  marked points $y_{1},\cdots,y_{n}$. We choose local coordinate system $\mathbf{A}$ and define a pregluing map $u_{(\mathbf{r})}: \Sigma_{(\mathbf{r})}\to M$ as in \S\ref{s_intro_3}. Set
 $$t_{i}=e^{-2r_{i}-2\pi\tau_{i}}, \;\;\;|\mathbf{r}|=min\{r_1,...,r_{\mathfrak{e}}\},\;\;\;b_{(\mathbf r)}:=(a_{o},\mathbf{(r)}, u_{(\mathbf r)}).$$
\v
Let $K$ be a $N$-dimensional linear space. Let
$$ \mathfrak{i}: K\times \mathbf{A}\times W^{k,2,\alpha}\left(\Sigma(R_0),(u\mid_{\Sigma(R_0)})^*TM \right)$$$$\to
W^{k-1,2,\alpha}\left(\Sigma(R_0),(u\mid_{\Sigma(R_0)})^*TM\otimes \wedge^{0,1}_{j_\mathbf{s}}T^{*}\Sigma(R_0)\right)
$$
be a smooth map such that
 $D_{v} + d\mathfrak{i}_{(\kappa,\mathbf{s},v\mid_{\Sigma(R_0)})}$
is surjective for any $(\kappa,b)\in K\times  O_{b_o}(\mathrm{R},\delta,\rho)$, where $b=(\mathbf{s},(\mathbf r),v)$, $v=\exp_{u_{(\mathbf{r})}}h$ and $\mathbf{O}_{b_o}(\mathrm{R},\delta,\rho)=\cup_{|\mathbf r|\geq \mathrm{R}}\mathbf{O}_{b_{(\mathbf r)}}(\delta,\rho)$.
\v\n
Define a thickned Fredholm system
$(K\times \mathbf{O}_{b_{o}}(\mathrm{R},\delta,\rho), K\times \E|_{\mathbf{O}_{b_{o}}(\mathrm{R},\delta,\rho)}, \mathcal{S})$ with
 \begin{equation}\label{regu.equ}
\mathcal{S}(\kappa,b)=\bar{\partial}_{j_\mathbf{s},J}v +  \mathfrak{i}(\kappa, b).
\end{equation}
The following lemma is proved in
\cite{LS-1}.
\begin{lemma}\label{isomor of ker}
For $|\mathbf{r}|>R_0$ there is an isomorphism
$I_{(\mathbf r)}: \ker D \mathcal{S}_{( \kappa_{o}, b_{o})}\longrightarrow \ker D \mathcal{S}_{( \kappa_{o},b_{(\mathbf r)})}.$
\end{lemma}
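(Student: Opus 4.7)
The plan is to repeat, in the thickened Fredholm setting for $J$-holomorphic maps, exactly the cut-off-and-correct construction used for the finite rank bundle in Lemma \ref{lem_est_I_r}. Concretely, given an element $(\kappa,\xi+\hat\xi_0)\in \ker D\mathcal{S}_{(\kappa_o,b_o)}$, where $\xi$ lives on the disjoint normalization of $\Sigma$ near the nodes and $\hat\xi_0$ encodes the common value of $\xi$ at each pair of preimages, I would first form an approximate element on $\Sigma_{(\mathbf r)}$ by
\[
\xi_{(\mathbf r)} := \sum_{i=1}^{\mathfrak{e}} \beta_{i;r_i}(s_i)\,\xi_i + \hat\xi_0,
\]
leaving $\kappa$ unchanged. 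Since $d\mathfrak{i}$ is supported in the fixed compact region $\Sigma(R_0)$ and since $u_{(\mathbf r)}=u$ there, the perturbation $d\mathfrak{i}$ intertwines the two linearizations in that region; the only nontrivial discrepancy between $D\mathcal{S}_{(\kappa_o,b_o)}$ and $D\mathcal{S}_{(\kappa_o,b_{(\mathbf r)})}$ lives in the gluing neck, where, as in \eqref{eqn_Dh_r}, the cut-off error $\bar\partial \beta_{i;r_i}\cdot\xi_i$ plus an exponentially small contribution from the pregluing map dominates.

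The first step is to produce a right inverse $Q_{b_{(\mathbf r)}}$ of $D\mathcal{S}_{(\kappa_o,b_{(\mathbf r)})}$, uniformly bounded in $(\mathbf r)$. This is done, mimicking Lemma \ref{aright_inverse_after_gluing}, by patching together a $G_{b_o}$-equivariant right inverse $Q_{b_o}$ of $D\mathcal{S}_{(\kappa_o,b_o)}$ (which exists because of the surjectivity assumption on $D_v+d\mathfrak{i}$) via cut-off functions $\beta_{i;r_i},\beta_{i;2}$, obtaining an approximate right inverse $Q'_{b_{(\mathbf r)}}$ with
\[
\bigl\|D\mathcal{S}_{(\kappa_o,b_{(\mathbf r)})}\circ Q'_{b_{(\mathbf r)}} - \mathrm{Id}\bigr\| \leq \tfrac{C}{|\mathbf r|}+\tfrac{1}{2}<1,
\]
so that $Q_{b_{(\mathbf r)}}=Q'_{b_{(\mathbf r)}}\bigl[D\mathcal{S}_{(\kappa_o,b_{(\mathbf r)})}\circ Q'_{b_{(\mathbf r)}}\bigr]^{-1}$ satisfies a uniform bound. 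Then I set
\[
I_{(\mathbf r)}(\kappa,\xi+\hat\xi_0):=\bigl(\kappa,\;\xi_{(\mathbf r)}-Q_{b_{(\mathbf r)}}\circ D\mathcal{S}_{(\kappa_o,b_{(\mathbf r)})}(\kappa,\xi_{(\mathbf r)})\bigr),
\]
which automatically lands in $\ker D\mathcal{S}_{(\kappa_o,b_{(\mathbf r)})}$, and $\|I_{(\mathbf r)}\|\leq \mathsf C$ follows from the uniform bounds on $Q_{b_{(\mathbf r)}}$ and on the cut-off operation.

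For injectivity I would combine the two estimates that already appear implicitly in the proof of Lemma \ref{lem_est_I_r}: on the one hand, the error $D\mathcal{S}_{(\kappa_o,b_{(\mathbf r)})}(\kappa,\xi_{(\mathbf r)})$ is of size $O(1/|\mathbf r|)\bigl(\|\xi\|_{k,2,\alpha}+|\xi_0|\bigr)$, giving
$\|(\kappa,\xi_{(\mathbf r)})\|\leq \tfrac{C}{|\mathbf r|}\bigl(\|\xi\|+|\xi_0|\bigr)$ whenever $I_{(\mathbf r)}(\kappa,\xi+\hat\xi_0)=0$; on the other hand, the exponential decay of kernel elements near the nodes (the analog of Lemma \ref{tube_ker_c-1} invoked in \eqref{lower_bound_h_r}) gives the lower bound
$\|(\kappa,\xi_{(\mathbf r)})\|\geq (1-\epsilon')\bigl(\|\xi\|+|\xi_0|\bigr)$ once $|\mathbf r|$ is large. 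These two inequalities force $\kappa=0$, $\xi=0$ and $\xi_0=0$. Finally, since $D\mathcal{S}_{(\kappa_o,b_o)}$ and $D\mathcal{S}_{(\kappa_o,b_{(\mathbf r)})}$ are surjective Fredholm operators of the same index (the index is invariant under the gluing, the $\hat\xi_0$ component accounting for the matching condition at each node), the kernels have equal dimension and $I_{(\mathbf r)}$ is an isomorphism.

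The main obstacle is the uniform right-inverse bound in the presence of the thickening $\mathfrak{i}$: we must ensure that the right inverse $Q_{b_o}$ chosen on the nodal side can be patched through the neck without picking up a $|\mathbf r|$-dependent norm, and that the correction $d\mathfrak{i}$, while finite rank, does not obstruct the Neumann-series inversion of $D\mathcal{S}_{(\kappa_o,b_{(\mathbf r)})}\circ Q'_{b_{(\mathbf r)}}$; this is exactly where the localization of $d\mathfrak{i}$ in $\Sigma(R_0)$ and the exponential decay estimate \eqref{gluing surface} for the neck contribution are used.
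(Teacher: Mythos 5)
The paper does not prove this lemma in the text — it is imported wholesale from \cite{LS-1} (see the sentence immediately preceding the statement), so there is no in-paper proof to compare against. Your proposal, however, transposes the cut-off/correct/Neumann-series construction that the paper does carry out for the line-bundle operator $D^{\widetilde{\mathbf L}}$ (Lemmas \ref{aright_inverse_after_gluing} and \ref{lem_est_I_r}) to the thickened operator $D\mathcal{S}$, and that is precisely the strategy [LS-1] uses; the overall plan is sound.

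One inaccuracy is worth fixing. Your formula
$$I_{(\mathbf r)}(\kappa,\xi+\hat\xi_0)=\bigl(\kappa,\ \xi_{(\mathbf r)}-Q_{b_{(\mathbf r)}}\circ D\mathcal{S}_{(\kappa_o,b_{(\mathbf r)})}(\kappa,\xi_{(\mathbf r)})\bigr)$$
holds $\kappa$ fixed, which silently assumes the patched right inverse $Q_{b_{(\mathbf r)}}$ takes values in $\{0\}\times W^{k,2,\alpha}$. In the thickened Fredholm system that is generally false: the whole point of introducing $K$ is that $D_v$ alone need not be surjective, so both $Q_{(\kappa_o,b_o)}$ and the patched $Q_{b_{(\mathbf r)}}$ must in general have a nontrivial $K$-component (and indeed the paper's own $Glu_{\mathbf{s},(\mathbf r)}$ formula treats $Q_{(\kappa_o,b_{(\mathbf r)})}$ as $K$-valued). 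The correction should be applied to the full pair, i.e.
$$I_{(\mathbf r)}(\kappa,\xi+\hat\xi_0)=(\kappa,\xi_{(\mathbf r)})-Q_{b_{(\mathbf r)}}\circ D\mathcal{S}_{(\kappa_o,b_{(\mathbf r)})}(\kappa,\xi_{(\mathbf r)}),$$
so that both the $K$-slot and the $W$-slot may change. Once this is adjusted the injectivity estimate (upper bound $O(1/|\mathbf r|)$ on the corrected pair versus the lower bound from exponential decay of kernel elements in the neck) and the index-counting argument for surjectivity go through verbatim; also make sure the norm on $\ker D\mathcal{S}$ includes the $|\kappa|$ contribution so that the lower bound controls $\kappa$ as well.
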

\v

For fixed $(\mathbf{r})$ we consider the family of maps:
\begin{align*}& \mathcal{F}_{(\mathbf{r})}: K   \times \mathbf{A} \times W^{k,2,\alpha}\left(\Sigma_{(\mathbf r)},u_{(\mathbf r)}^*TM \right)\to W^{k-1,2,\alpha}\left(\Sigma_{(\mathbf r)},(u_{(\mathbf r)}^*TM\otimes \wedge^{0,1}_{j_\mathbf{s}}T^{*}\Sigma_{(\mathbf r)}\right),\;\;\;\;\;\\
& \mathcal{F}_{(\mathbf{r})}(\kappa,\mathbf{s},h)=\Psi_{j_{\mathbf{s}},j_{\mathbf{s}_{o}}}\Phi_{u_{(\mathbf{r})}}(h)^{-1}
\left(\bar{\partial}_{j_{\mathbf{s}},J}v
+ \mathfrak{i}(\kappa,b)\right),
\end{align*}
where $b=(\mathbf{s},(\mathbf{r})  , v),\;v=\exp_{u_{(\mathbf{r})}}h$  and
\begin{equation*}
 \Psi_{j_{\mathbf{s}},j_{\mathbf{s}_{o}}}\Phi_{u_{(\mathbf{r})}}(h)^{-1}:W_{\mathbf{r}}^{k-1,2,\alpha}(\Sigma_{(\mathbf{r})}, v^{*}TM\otimes \wedge^{0,1}_{j_{\mathbf{s}}}T^{*}\Sigma_{(\mathbf{r})})\to W_{\mathbf{r}}^{k-1,2,\alpha}(\Sigma_{(\mathbf{r})}, u_{(\mathbf{r})}^{*}TM\otimes \wedge^{0,1}_{j_{\mathbf{s}}}T^{*}\Sigma_{(\mathbf{r})}).
\end{equation*}
By implicit function theorem (Theorem \ref{details_implicit_function_theorem}, Theorem \ref{smooth_implicit_function_theorem}),
there exist $\delta>0$, $\rho>0,R>0$, a small neighborhood $O_{(\mathbf{r})}$ of $0 \in \ker\;D\mathcal S|_{b_{(\mathbf{r})}}$ and a unique smooth map
$$f_{(\mathbf{r})}: \mathbf{A}
\times O_{(\mathbf{r})}\rightarrow W^{k-1,2,\alpha}(\Sigma_{(\mathbf{r})},u_{(\mathbf{r})}^{*}TM\otimes \wedge^{0,1}T\Sigma_{(\mathbf{r})})$$ such that for any $(\kappa,\mathbf{s},h)\in \mathbf{A}
\times O_{(\mathbf{r})}$ and $|\mathbf r|>R,$
\begin{equation}\label{glu_solu_J_hol}
\mathcal S(\kappa,b)=0.
\end{equation}

Let $(s_{l}^{i},t_{l}^{i}),l=1,2$ be the cylinder coordinates near the node $q_{i}$. Set $$V_{i}:=\cup_{l=1}^{2}\left\{\left.\left(s_{l}^{i},t_{l}^{i}\right)\in\Sigma\;\right|\;\tfrac{r_{i}}{2} \leq |s_{l}^{i}|\leq \tfrac{3r_{i}}{2} \right\}.$$
Let $\pi: K   \times W_{\mathbf r}^{k,2,\alpha}\left(\Sigma_{(\mathbf r)},u_{(\mathbf r)}^*TM \right)\to W_{\mathbf r}^{k,2,\alpha}\left(\Sigma_{(\mathbf r)},u_{(\mathbf r)}^*TM \right)$ (resp. $\pi:   K   \times \mathcal W^{k,2,\alpha}\left(\Sigma,u^*TM \right)\to \mathcal W^{k,2,\alpha}\left(\Sigma,u^*TM \right)$) be the projection.
 Denote $$Glu_{\mathbf{s},(\mathbf{r})}(\kappa,\xi)=I_{(\mathbf{r})}(\kappa,\xi)+Q_{(\kappa_{o},b_{(\mathbf{r})})}\circ f_{\mathbf{s},(\mathbf{r})} \circ I_{\mathbf{r}}(\kappa,\xi),$$
$$Glu^{*}_{\mathbf{s},(\mathbf{r})}(\kappa,\xi)=I^{*}_{(\mathbf{r})}(\kappa,\xi)+Q^{*}_{(\kappa_{o},b_{(\mathbf{r})})}\circ f_{\mathbf{s},(\mathbf{r})} \circ I_{\mathbf{r}}(\kappa,\xi).$$
In \cite{LS-1} we proved
\begin{theorem}\label{coordinate_decay-2}
	There exists positive  constants  $\mathsf C, \mathsf{d}, R_{0}$ such that for any $(\kappa,\xi)\in \ker D \mathcal{S}_{(\kappa_{o},b_{o})}$ with $\|(\kappa,\xi)\|< \mathsf{d}$, and any $X_{i}\in \{\frac{\p}{\p r_{i}},\frac{\p}{\p \tau_{i}}\},i=1,\cdots,\mathfrak{e}$, the following estimate hold
	$$  \left\| X_{i}\left(Glu^{*}_{\mathbf{s},(\mathbf{r})}(\kappa,\xi) \right) \right\|_{k-2,2,\alpha}
	\leq  \mathsf{C} e^{-(\fc-5\alpha)\tfrac{r_{i}}{4} },$$
	$$ \left\|X_{i}X_{j}\left(Glu^{*}_{\mathbf{s},(\mathbf{r})}(\kappa,\xi) \right) \right\|_{k-2,2,\alpha}+\left\|\left.X_{i}\left(Glu^{*}_{\mathbf{s},(\mathbf{r})}(\kappa,\xi) \right)\right|_{V_{j}} \right\|_{k-2,2,\alpha}
	\leq  \mathsf{C} e^{-(\fc-5\alpha)\tfrac{r_{i}+r_{j}}{4} },$$
	$1\leq i\neq j\leq\mathfrak{e},$
	for any $\mathbf{s}\in \bigotimes_{i=1}^{\mathfrak{e}} O_i$.
 \end{theorem}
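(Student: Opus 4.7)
The plan is to differentiate the implicit equation \eqref{glu_solu_J_hol} defining the gluing solution with respect to a gluing parameter $X_i \in \{\p/\p r_i,\, \p/\p \tau_i\}$, bound the resulting right-hand side using the exponential decay of the $(j_o,J)$-holomorphic pieces $u_l$ near each node, and then invert with the uniformly bounded right inverse $Q^*_{(\kappa_o, b_{(\mathbf r)})}$.

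First I would localize the $X_i$-dependence. The map $Glu^*_{\mathbf{s},(\mathbf r)}$ depends on $(r_i, \tau_i)$ only through the $i$-th gluing neck: via $u_{(\mathbf r)}$ and via the cutoff $\beta_{i;r}$ that enters $I^*_{(\mathbf r)}$ and $(Q^*_{b_{(\mathbf r)}})'$ (cf.\ \eqref{express_right_inverse-1}). Hence $X_i \mathcal S$ and $X_i I^*_{(\mathbf r)}(\kappa,\xi)$ are both supported in the annulus $V_i$. The removable-singularity decay for $(j_o,J)$-holomorphic maps gives
\begin{equation*}
|u_l(s^i_l, t^i_l) - u_l(q_i)| + \sum_{|\gamma|\leq k-1}|\nabla^\gamma (u_l - u_l(q_i))| \leq C\, e^{-\fc |s^i_l|},
\end{equation*}
which on $V_i$ is at most $Ce^{-\fc r_i/2}$. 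Combined with the weight $e^{2\alpha|s|}$ in $\|\cdot\|_{k-1,2,\alpha}$ and the bound $|\p_s \beta_{i;r}|\leq C/r_i$, this yields $\|X_i \mathcal{S}(\kappa, b_{(\mathbf r)})\|_{k-1,2,\alpha} \leq \mathsf C\, e^{-(\fc - 5\alpha) r_i/4}$. The rate $(\fc-5\alpha)/4$ arises from balancing the pointwise decay $\fc/2$, the weight growth $\alpha$, and the order-$k-1$ Sobolev trade.

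Next I would differentiate $\mathcal{S}(\kappa, b) = 0$ at $b = Glu_{\mathbf{s},(\mathbf r)}(\kappa,\xi)$ to obtain
\begin{equation*}
D\mathcal{S}_{(\kappa_o, b_{(\mathbf r)})}\bigl(X_i Glu^*_{\mathbf{s},(\mathbf r)}(\kappa,\xi)\bigr) = -(X_i \mathcal{S}) + \mathcal R_i,
\end{equation*}
where $\mathcal R_i$ is quadratic in $X_i Glu^*$. Applying $Q^*_{(\kappa_o, b_{(\mathbf r)})}$, whose operator norm is uniformly bounded in $\mathbf r$ by the standard patching construction (the analog of Lemma \ref{aright_inverse_after_gluing}), and absorbing $\mathcal R_i$ by smallness of the initial data, produces the first estimate. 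For the mixed derivative $X_iX_j$ with $i\neq j$, differentiate once more: since $V_i \cap V_j = \emptyset$ for large $r_i, r_j$, the direct double source $X_j X_i \mathcal S$ vanishes, and all non-zero contributions arise from the chain rule as products of two single-derivative terms, yielding the product decay $e^{-(\fc-5\alpha)(r_i+r_j)/4}$. The restriction estimate $\|X_i Glu^*|_{V_j}\|_{k-2,2,\alpha}$ follows by the same mechanism: the source for $X_i Glu^*$ is supported in $V_i$, so its image under $Q^*_{(\kappa_o, b_{(\mathbf r)})}$, when measured on the disjoint annulus $V_j$ under the weight $e^{\alpha|s|}$, gains the additional factor $e^{-(\fc-5\alpha) r_j/4}$ from the exponential decay of solutions of the linearized Cauchy-Riemann equation away from their source.

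The main obstacle is this third step: verifying that $Q^*_{(\kappa_o, b_{(\mathbf r)})}$, built by cutoff-patching the local right inverse across the necks, transports an exponentially-localized source into an output whose weighted norm outside the support region is controlled by the matching exponential factor. This requires the explicit construction of $Q^*$ (analogous to \eqref{express_right_inverse-1}), the asymptotic behavior of kernel elements along the cylindrical ends, and a Newton iteration to absorb the nonlinearity from $f_{\mathbf s,(\mathbf r)}$ and the passage between $Glu$ and $Glu^*$.
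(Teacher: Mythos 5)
The paper itself does not give a proof of Theorem \ref{coordinate_decay-2}: it is stated as a result proved in \cite{LS-1} and is quoted here only because it is an input to the proof of the analogous Theorem \ref{thm_est_mix_deri} for the bundle $\widetilde{\mathbf{L}}$. Thus there is no ``paper's own proof'' to compare against directly. What can be compared is how your sketch relates to the closely parallel proof the paper does give in \S\ref{some_oper}--\S5.3, which imports Theorem \ref{coordinate_decay-2} and reproduces its logic for $\widetilde{\mathbf{L}}$.

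At that level your overall strategy is the right one --- localize the $X_i$-dependence to the neck $V_i$, use the exponential decay of the holomorphic pieces to get a small source term, invert with a uniformly bounded right inverse, and for the mixed/restriction estimates exploit the disjointness of $V_i$ and $V_j$ together with elliptic decay away from the source. You also correctly identify the single hardest point: propagating an exponentially-localized source to an exponentially-decaying output on the far annulus. However, as a proof the proposal is substantially incomplete, and a few of the gaps are not merely ``details.'' First, ``$X_i$'' of any of the objects here is not meaningful until everything is transported to a fixed Banach space, because the domain $\Sigma_{(\mathbf r)}$ (and hence the spaces $W^{k,2,\alpha}_{\mathbf r, u_{(\mathbf r)}}$) changes with $\mathbf r$. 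The paper handles this for $\widetilde{\mathbf{L}}$ with the explicit transfer operators $H_r, P_r, \phi_r$ and the starred operators $I^*, (Q')^*, Q^*$ built from them in \S\ref{some_oper}; your sketch uses the notation $Glu^*$ but never says how the transfer is carried out nor how derivatives commute with it. Second, you differentiate the equation $\mathcal{S}(\kappa,b)=0$ and then ``apply $Q^*$,'' absorbing everything else into a quadratic remainder $\mathcal{R}_i$. But the solution is $\nu^* = I^*_{(\mathbf r)}\zeta + Q^* f I_{(\mathbf r)}\zeta$, so $X_i\nu^*$ contains the derivatives $X_i I^*$, $X_i Q^*$ and $X_i(H_{\mathbf r}fI_{(\mathbf r)}\zeta)$; the terms coming from $X_i Q^*$ are \emph{not} quadratically small and require the separate operator estimates of the type in Lemma \ref{lem_Q'_est} (items (2)--(5)) together with the identity $\p_r(H_r(DQ')^{-1}P_r) = -H_r(DQ')^{-1}P_r\,\p_r(H_rDQ'P_r)\,H_r(DQ')^{-1}P_r + \cdots$. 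Treating $Q^*$ as fixed would give the wrong structure. Third, the restriction estimate and the cross-term estimate both hinge on a quantitative tube-decay lemma for solutions of the (inhomogeneous, perturbed) Cauchy--Riemann operator on long cylinders, i.e.\ the analogues of Lemma \ref{tube_ker} and Lemma \ref{tube_ker_c-1} in the Appendix; you gesture at this but do not state or prove the lemma, and it is precisely the ingredient you call the ``main obstacle.'' Finally, the claim that ``the direct double source $X_jX_i\mathcal S$ vanishes'' is a little too quick: while the naive source is supported in $V_i\cap V_j=\emptyset$, the chain rule produces terms like $X_j Q^*\cdot X_i(\cdots)$ and $X_j(H_{\mathbf r}P_{\mathbf r})$ which need the second-order operator estimates \eqref{sec_app_right}--\eqref{eqn_q_2nd} to control. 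In short: correct high-level strategy, missing the transfer framework, the $r$-derivative estimates of the glued right inverse, and the tube-decay lemma that you yourself flagged as unresolved.
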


\section{Smoothness of $Glu^{\widetilde{\mathbf L}}_{(\mathbf r)}(e_{\alpha})\mid_{\Sigma(R_0)}$}

We have shown in \S \ref{s_intro_3} that for any fixed $(\mathbf{r})$, $Glu^{\widetilde{\mathbf L}}_{(\mathbf r)}$ is smooth with respect to $\mathbf{s},h$ over $\widetilde{\mathbf{O}}_{b_o}(\delta_{o},\rho_{o})$. In this section we discuss the smoothness with respect to $(\mathbf{r}),\mathbf{s},h$. To this end we need to fix a Riemann surface $\Sigma_{(\mathbf{R_o})}$.
\v
We first consider gluing one node case.
Let $\alpha_{(r)}:[0,2r]\to [0,2R_{0}]$ be a smooth increasing  function satisfying
$$
  \alpha_{(r)}(s)=\left\{
\begin{array}{ll}
s\;\;\;\;  &  if\; s\in [0,\frac{R_{0}}{2}-1] \\
 \frac{R_{0}}{2}+\frac{R_{0}}{2r-R_{0}}(s-R_{0}/2) \;\;\;\;\;\; & if\;s\in [R_{0}/2,2r-R_{0}/2]   \\
 s-2r+2R_{0} & if\; s\in [2r-\frac{R_{0}}{2}+1,2r]
\end{array}
\right.
$$
Set $\alpha_{(r)}:[-2r,0]\to [-2R_{0},0]$ by $\alpha_{(r)}(s)=-\alpha_{(r)}(-s).$
We can define a map $\varphi_{(r)}:\Sigma_{(r)}\to \Sigma_{(R_{0})}$ as follows:
$$\varphi_{(r)}=\left\{
\begin{array}{ll}
p, & p\in \Sigma(R_{0}/4).\\
(\alpha_{(r)}(s_{1}),t_{1})   \;\;\;\;\;\;\;& (s_{1},t_{1})\in \Sigma_{(r)}\setminus \Sigma(R_{0}/4).
\end{array}
\right.
$$
Obviously, $\varphi_{(r)}^{-1}(\mathbf{y})=\mathbf{y}.$ For any $s_{1}\in [0,2r]$ and $s_{2}\in [-2r,0]$, we have
\begin{equation}
s_{1}=s_{2}+2r \Longleftrightarrow \alpha_{(r)}(s_{1})=\alpha_{(r)}(s_{2})+2R_{0}.
\end{equation}
Then we obtain a family of Riemann surfaces $\left(\Sigma_{(R_{0})},(\varphi_{(r)}^{-1})^{*}j_{r} ,\varphi_{(r)}^{-1}(\mathbf{y})\right)$.
\v

Denote $u^{\circ}_{(r)}:=u_{(r)}\circ \varphi_{r}^{-1}.$  The $\varphi_{(r)}^{-1}$ induce an isomorphism
$$
(\varphi_{(r)}^{-1})^{*}:W^{k,2,\alpha}\left(\Sigma_{(r)},u_{(r)}^*TM \right)\to W^{k,2,\alpha}\left(\Sigma_{(R_{0})},(u^{\circ}_{(r)})^*TM \right).
$$
For any $h\in W^{k,2,\alpha}\left(\Sigma_{(R_{0})},(u_{(R_{0})})^*TM \right)$, denote $v=\exp_{u_{(R_{0})}}(h),$ we have map $\varphi^{*}_{r}v:\Sigma_{(r)}\to M.$ There exists a family of functions $\hat{h}^{\circ}_{(r)}\in W^{k,2,\alpha}\left(\Sigma_{(R_{0})},(u_{(R_{0})})^*TM \right)$ such that ${u}^{\circ}_{(r)}=\exp_{u_{(R_{0})}}\left(\hat{h}^{\circ}_{(r)}\right).$
It is easy to check that
$\hat{h}^{\circ}_{(r)}$ is  a smooth family of functions and for any $l\in \mathbb Z^{+}$,
\begin{equation}
\left\|\hat{h}^{\circ}_{(r)}\right\|_{C^{l}\left(\Sigma_{(R_{0})}\right)}\leq C(r,l),
\end{equation}
for some constant $C(r,l)>0$ depending only on $r$ and $l.$  Denote
$$j^{\circ}_{r}=\left(\varphi_{(r)}^{-1}\right)^{*}j_{r},\;\;\;b_{(R_0)}:=(j_{R_{0}}, u_{(R_{0})}),\;\;\;\;{b}^{\circ}_{(r)}:=(j^{\circ}_{r},u^{\circ}_{(r)}),\;\;\;\;b:=(j_{r}^{\circ}, v).$$

 Let $(s,t)$ be  the holomorphic  coordinates on $\Sigma_{(r)}\setminus \Sigma(R_{0})$ such that
$j_{r}(\frac{\p}{\p s})=\frac{\p}{\p t},j_{r}(\frac{\p}{\p t})=-\frac{\p}{\p s}.$ Denote  $(s^{\circ},t^{\circ})=\varphi_{r}(s,t)$. Then we have
 \begin{equation}\label{eqn_r_bull_cx}
j_{r}^{\circ}\frac{\p}{\p s^{\circ}}=\frac{1}{\varphi'_{r}(s)}\frac{\p}{\p t^{\circ}},\;\;\;\;\;\;j_{r}^{\circ}\frac{\p}{\p t^{\circ}}=
 -{\varphi'_{r}(s)}\frac{\p}{\p s^{\circ}}\;\;\;\;\mbox{ in }\Sigma_{(R_{0})}\setminus \Sigma(R_{0}/4).
 \end{equation}
 Then for any $\eta\in W^{k-1,2,\alpha}(\Sigma_{(R_{0})},v^{*}\widetilde{\mathbf{L}}\otimes \wedge^{0,1}_{j_{r}^{\circ}}T\Sigma_{(R_{0})})$ and $p\in \Sigma_{(R_{0})}$,
$\Psi^{\widetilde{\mathbf L}}_{j_{r}^{\circ},j_{R_{0}}}\eta(p)$ is a smooth family of isomorphisms.
Since $M,u_{(R_{0})}$ and $v$ are smooth,
 $\Phi^{\widetilde{\mathbf L}}_{u_{(R_{0})},v}$ is also a smooth family of isomorphisms. It follows that $P^{\widetilde{\mathbf L}}_{b,b_{(R_{0})}}$ is a smooth family of isomorphisms. In particular, $P^{\widetilde{\mathbf L}}_{b^{\circ}_{(r)},b_{(R_{0})}}$ is smooth with respect to $(r)$.

\v

We have the operator
$$
D^{\widetilde{\mathbf L}}|_{b^{\circ}_{(r)}}: W^{k,2}(\Sigma_{(R_0)},L|_{b^{\circ}_{(r)}})\to W^{k-1,2}(\Sigma_{(R_{0})},L|_{b^{\circ}_{(r)}}\otimes \wedge^{0,1}_{j_{r}^{\circ}}T\Sigma_{(R_{0})}).
$$
Using \eqref{eqn_r_bull_cx} one can easily check that
$$
(\varphi_{r}^{-1})_{*}D^{\widetilde{\mathbf L}}|_{b^{\circ}_{(r)}}=D^{\widetilde{\mathbf L}}|_{b_{(r)}}.
$$
  We define  $Q'^{\widetilde{\mathbf L}}_{b^{\circ}_{(r)}}: W^{k-1,2}(\Sigma_{(R_{0})},L|_{b^{\circ}_{(r)}}\otimes \wedge^{0,1}_{j_{r}^{\circ}}T \Sigma_{(R_{0})})\to W^{k,2}(\Sigma_{(R_{0})},L|_{b^{\circ}_{(r)}})$ by $$Q'^{\widetilde{\mathbf L}}_{b^{\circ}_{(r)}}\eta_{r}^{\circ}=(\varphi^{-1}_{r})^{*}\left(Q'^{\widetilde{\mathbf L}}_{b_{(r)}}(\varphi_{r}^{*}\eta_{r}^{\circ})\right).$$  We define  $Q^{\widetilde{\mathbf L}}_{b^{\circ}_{(r)}}: W^{k-1,2}(\Sigma_{(R_{0})},L|_{b^{\circ}_{(r)}}\otimes \wedge^{0,1}_{j_{r}^{\circ}}T \Sigma_{(R_{0})})\to W^{k,2}(\Sigma_{(R_{0})},L|_{b^{\circ}_{(r)}})$ by
  $$
  Q^{\widetilde{\mathbf L}}_{b^{\circ}_{(r)}}=Q'^{\widetilde{\mathbf L}}_{b^{\circ}_{(r)}}[D^{\widetilde{\mathbf L}}|_{b^{\circ}_{(r)}}Q'^{\widetilde{\mathbf L}}_{b^{\circ}_{(r)}}]^{-1}.
  $$
 We can also define $I^{\widetilde{\mathbf L}}_{b^{\circ}_{(r)}}:Ker D^{\widetilde{\mathbf L}}|_{b_{o}} \to Ker D^{\widetilde{\mathbf L}}|_{b^{\circ}_{(r)}}$ by
$$
I^{\widetilde{\mathbf L}}_{b^{\circ}_{(r)}}(\zeta)=(\varphi^{-1}_{r})^{*}(I^{\widetilde{\mathbf L}}_{(r)}(\zeta)).
$$
  It is easy to see that
  $$
C(k,\alpha,r)^{-1}\|Q^{\widetilde{\mathbf L}}_{b_{(r)}}\|\leq \|Q^{\widetilde{\mathbf L}}_{b^{\circ}_{(r)}}\|\leq C(k,\alpha,r)\|Q^{\widetilde{\mathbf L}}_{b_{(r)}}\|
  $$
  where $C(k,\alpha,r)$ is a constant depending only on $k,\alpha$ and $r.$

 \v
 Denote
 $$D^{\widetilde{\mathbf L}}=P^{\widetilde{\mathbf L}}_{b^{\circ}_{(r)},b_{(R_{0})}}\circ D^{\widetilde{\mathbf L}}|_{b^{\circ}_{(r)}}\circ \left[P^{\widetilde{\mathbf L}}_{b^{\circ}_{(r)},b_{(R_{0})}}\right]^{-1},\;\;\;\;\;\;\;\;\;\;Q^{\widetilde{\mathbf L}}=P^{\widetilde{\mathbf L}}_{b^{\circ}_{(r)},b_{(R_{0})}}\circ Q^{\widetilde{\mathbf L}}_{b^{\circ}_{(r)}}\circ \left[P^{\widetilde{\mathbf L}}_{b^{\circ}_{(r)},b_{(R_{0})}}\right]^{-1},$$
 $$
 Q'^{\widetilde{\mathbf L}}=P^{\widetilde{\mathbf L}}_{b^{\circ}_{(r)},b_{(R_{0})}}\circ Q'^{\widetilde{\mathbf L}}_{b^{\circ}_{(r)}}\circ \left[P^{\widetilde{\mathbf L}}_{b^{\circ}_{(r)},b_{(R_{0})}}\right]^{-1}. $$
For any $\eta\in W^{k-1,2}(\Sigma_{(R_{0})},L|_{b_{(R_{0})}}\otimes \wedge^{0,1}_{j_{R_{0}}}T\Sigma_{(R_{0})})$
 denote  $\eta^{\circ}_{r}= \left[P^{\widetilde{\mathbf L}}_{b^{\circ}_{(r)},b_{(R_{0})}}\right]^{-1}\eta $
and $\eta_{r}=\varphi_{r}^{*}\eta^{\circ}_{r}$. Let $(\eta_{1},\eta_{2})=(\beta_{1;2}(s_{1})\eta_{r}( s_{1}, t_{1}),\beta_{1;2}(s_{2})\eta_{r}(s_{2}, t_{2}) ).$ Denote $(h_{1},h_{2})=Q_{b_o}^{\widetilde{\mathbf L}}(\eta_{1},\eta_{2})$.
Since $(s_{i}^{\circ},t_{i}^{\circ})=\varphi(s_{i},t_{i})$, we have
\begin{align*}
&Q'^{\widetilde{\mathbf L}}_{b^{\circ}_{(r)}}\circ \left[P^{\widetilde{\mathbf L}}_{b^{\circ}_{(r)},b_{(R_{0})}}\right]^{-1}\eta=Q'^{\widetilde{\mathbf L}}_{b^{\circ}_{(r)}}\circ \eta_{r}^{\circ}=(\varphi^{-1}_{r})^{*}\left(Q'^{\widetilde{\mathbf L}}_{b_{(r)}}( \eta_{r})\right)
\\
=&\beta_{1;r}\cdot \varphi_{r}^{-1}(s^{\circ}_{1}) h_{1}\cdot \varphi_{r}^{-1}(s^{\circ}_{1},t^{\circ}_{1}) +\beta_{2;r}\cdot \varphi_{r}^{-1}(s^{\circ}_{1}-2R_{0}) h_{1}\cdot \varphi_{r}^{-1}(s^{\circ}_{1}-2R_{0}, t_{1}^{\circ}-2R_{0}).
\end{align*}
Since $P^{\widetilde{\mathbf L}}_{b^{\circ}_{(r)},b_{(R_{0})}}$ is a smooth, we have
$$
\|\nabla^{l}_{r} Q'^{\widetilde{\mathbf L}}\eta\|_{k,2}\leq C(k,\alpha,r)\|\eta\|_{k-1+l,2}.
$$
 Similarly, we obtain that
\begin{align}\label{eqn_smo_QI}
&\|\nabla^{l}_{r} D^{\widetilde{\mathbf L}}(\zeta)\|_{k-1,2}\leq C(k,\alpha,r)\|\zeta\|_{k+l,2},
\;\;\;\;\|\nabla^{l}_{r} Q^{\widetilde{\mathbf L}}\eta\|_{k,2}\leq C(k,\alpha,r)\|\eta\|_{k-1+l,2},\\
 &\left\| \nabla^{l}_{r} \left(P^{\widetilde{\mathbf L}}_{b^{\circ}_{(r)},b_{(R_{0})}}\circ I^{\widetilde{\mathbf L}}_{b^{\circ}_{(r)}}\right)(\zeta')\right\|_{k,2}\leq C(k,\alpha,r)\|\zeta'\|_{k+l,2,\alpha},\;\;\;\forall\zeta'\in Ker D^{\widetilde{\mathbf L}}_{b_{o}} .
\end{align}
The above estimates can be generalized to gluing several nodes.
\v
We can define $\varphi_{(\mathbf r)}$, $b^{\circ}_{(\mathbf r)}$ and $b_{(\mathbf {R}_{0})}$ as above.
We define a map $$
\mathcal {F}:D_{\mathbf{R}_{0}}^{*}(0)\times\mathbf{A}\times W^{k,2}(\Sigma_{(\mathbf{R}_{0})},u_{(\mathbf{R}_{0})}^{\star}TM)
\times {\mathcal W}^{k,2,\alpha}(\Sigma,\widetilde{\mathbf L}|_{b_{o}})\to W^{k-1,2,\alpha}(\Sigma_{(\mathbf{R}_{0})}, \widetilde{\mathbf L}|_{b_{(\mathbf{R}_{0})}}\otimes\wedge_{j_{\mathbf{R}_{0}}}^{0,1}T\Sigma_{(\mathbf{R}_{0})})
$$
by
$$
\mathcal {F}((\mathbf{r}),\fs,h,\zeta)= P^{\widetilde{\mathbf{L}}}_{b,b_{(\mathbf{R}_0)}}
\circ D^{\widetilde{\mathbf L}}_{b}\circ (P^{\widetilde{\mathbf{L}}}_{b,b^{\circ}_{(\mathbf{r})}})^{-1}I^{\widetilde{\mathbf L}}_{b^{\circ}_{(\mathbf r)}}\zeta,
$$
where $b=(\Sigma_{(\mathbf{R}_{0})},(\mathbf r),
\fs,v),$ $\fs(j_{o},\mathbf{y})=0$ and $v=\exp_{u_{(\mathbf{R}_{0})}}(h)$.
By the same argument as in Lemma \ref{lem_smooth_F} we see that $\mathcal {F}$ is a smooth function. There exists a family smooth function $\hat h_{(\mathbf r)}$ such that  $u^{\circ}_{(\mathbf r)}=\exp_{u_{(\mathbf R_{0})}}(\hat h_{(\mathbf r)}).$
Obviously, when $\|h-\hat h_{(\mathbf r)}\|_{k,2}$ small we have
$$
\mathcal {F}((\mathbf{r}),\fs,h,\zeta)=P^{\widetilde{\mathbf{L}}}_{b,b_{(\mathbf{R}_0)}}(P^{\widetilde{\mathbf{L}}}_{b,b^{\circ}_{(\mathbf r)}})^{-1}(\varphi_{\mathbf r}^{-1})^{*} \left(\mathcal {F}_{(\mathbf r)}(\fs,h',I^{\widetilde{\mathbf L}}_{b_{(\mathbf r)}}(\zeta))\right),
$$
where $h'=(\exp_{u_{(\mathbf r)}}^{-1}\circ (\exp_{u_{(\mathbf R)_{0}}}(h)\circ \varphi_{(\mathbf{r})}).$
  Then by \eqref{glu_solu} and the uniquiness  of the implicit function we have $f^{\widetilde{\mathbf L}}_{a,h,b^{\circ}_{(\mathbf r)}}=(\varphi_{\mathbf r}^{-1})^{*}f^{\widetilde{\mathbf L}}_{a,h',(\mathbf r)} $ such that for any $\zeta\in Ker D^{\widetilde{\mathbf L}}_{b_{o}}$,
$$
D^{\widetilde{\mathbf L}}_{b}\circ(P^{\widetilde{\mathbf{L}}}_{b,b^{\circ}_{(\mathbf r)}})^{-1}\left(I^{\widetilde{\mathbf L}}_{b^{\circ}_{(\mathbf r)}}(\zeta) + Q^{\widetilde{\mathbf L}}_{b^{\circ}_{(\mathbf r)}}\circ f^{\widetilde{\mathbf L}}_{\fs,h,b^{\circ}_{(\mathbf r)}}I^{\widetilde{\mathbf L}}_{b^{\circ}_{(\mathbf r)}}(\zeta)\right)=0
$$
as $|\fs|$ and $\|h\|_{k,2}$ small.
 Since
$$D_{\zeta}\mathcal {F}_{(\mathbf{r})}(\fs,h',0)(\zeta_{1})= \mathcal {F}_{(\mathbf{r})}(\fs,h', \zeta_{1}),\;\;\;D_{\zeta}\mathcal {F}_{(\mathbf{r})}(\fs,h',0)(0)=0,\;\;\;\; $$
we have a explicit formula for $f^{\widetilde{\mathbf L}}_{a,h',(\mathbf r)} $ ( see \eqref{eqn_psi} in the proof of Theorem \ref{smooth_implicit_function_theorem}):
$$ f^{\widetilde{\mathbf L}}_{a,h', (\mathbf r)}\circ I^{\widetilde{\mathbf L}}_{{(\mathbf r)}}(\zeta)= \mathcal {F}_{(\mathbf r)}(0,0, \mathcal {H}^{-1}(I^{\widetilde{\mathbf L}}_{{(\mathbf r)}}\zeta)),$$
where and $\mathcal {H}$ is defined by
\begin{equation} \label{eqn_psi}
\mathcal {H}(x):=x+  Q^{\widetilde{\mathbf L}}_{{(\mathbf r)}} \left(\mathcal {F}_{(\mathbf r)}(\fs,h',x)-  \mathcal {F}_{(\mathbf r)}(0,0,x) \right).\end{equation}
It follows that
$$
f^{\widetilde{\mathbf L}}_{a,h,b^{\circ}_{(\mathbf r)}}\circ I^{\widetilde{\mathbf L}}_{b^{\circ}_{(\mathbf r)}}(\zeta)=P^{\widetilde{\mathbf{L}}}_{b,b^{\circ}_{(\mathbf r)}}(P^{\widetilde{\mathbf{L}}}_{b,b_{(\mathbf{R}_0)}})^{-1}\mathcal {F}\left((\mathbf{r}),0,\hat h_{(\mathbf r)},(I^{\widetilde{\mathbf L}}_{b^{\circ}_{(\mathbf r)}})^{-1}\circ \mathcal {H}_{\circ}^{-1}\circ(I^{\widetilde{\mathbf L}}_{b^{\circ}_{(\mathbf r)}}(\zeta))\right)
$$
where
\begin{equation*}
\mathcal {H}_{\circ}(x):=x+  Q^{\widetilde{\mathbf L}}_{b^{\circ}_{(\mathbf r)}} \left(P^{\widetilde{\mathbf{L}}}_{b,b^{\circ}_{(\mathbf r)}}(P^{\widetilde{\mathbf{L}}}_{b,b_{(\mathbf{R}_0)}})^{-1}\mathcal {F}((\mathbf r),\fs,h,x)-  (P^{\widetilde{\mathbf{L}}}_{b^{\circ}_{(\mathbf r)},b_{(\mathbf{R}_0)}})^{-1} \mathcal {F}((\mathbf r),0,\hat h_{(\mathbf r)},x) \right).
\end{equation*}
Choose $\delta,\rho$ small and $|\mathbf r|$ big enough.
By \eqref{eqn_smo_QI} and $\nabla_{r_{i}}\mathcal {H}_{\circ}^{-1}=-\mathcal {H}^{-1}_{\circ}\circ(\nabla_{r_{i}}\mathcal {H}_{\circ})\circ\mathcal {H}_{\circ}^{-1},$
 one can check that
 $$
 \| \nabla^{l}_{\mathbf{r}} (P^{\widetilde{\mathbf L}}_{b^{\circ}_{(\mathbf r)},b_{(\mathbf R_{0})}}f^{\widetilde{\mathbf L}}_{a,h,b^{\circ}_{(\mathbf r)}}I^{\widetilde{\mathbf L}}_{b^{\circ}_{(\mathbf r)}})(\zeta)\|_{k-1,2}\leq C\|\zeta\|_{k+l,2,\alpha},
 $$ where $\nabla^{l}_{\mathbf{r}}=\nabla^{l_{1}}_{r_{1}}\cdots\nabla^{l_{\mathfrak e}}_{r_{\mathfrak e}}$ with $\sum_{i=1}^{\mathfrak{e}}l_{i}=l.$
Then   we have  for any $\zeta\in Ker D^{\widetilde{\mathbf L}}|_{b_{o}},$
$$\left\| \nabla^{l}_{\mathbf{r}}\left[P^{\widetilde{\mathbf L}}_{b^{\circ}_{(\mathbf r)},b}\left(I^{\widetilde{\mathbf L}}_{b^{\circ}_{(\mathbf r)}}+Q^{\widetilde{\mathbf L}}_{b^{\circ}_{(\mathbf{r})}}\circ f^{\widetilde{\mathbf L}}_{a,h,b^{\circ}_{(\mathbf r)}} \circ I^{\widetilde{\mathbf L}}_{b^{\circ}_{(\mathbf r)}}\right)(\zeta)\right]\right\|_{k,2}\leq C\|\zeta\|_{k+l,2,\alpha}.$$
On the other hand, since $u$  is smooth and $D^{\widetilde{\mathbf L}}|_{b_{o}}\zeta=\bar{\p}_{j_{o},u}\zeta=0$, by the standard elliptic estimate we have
$$
\|\zeta\|_{k+l,2,\alpha}\leq C\|\zeta\|_{k,2,\alpha}.
$$
Hence
$Glu^{\widetilde{\mathbf L}}_{\fs,(\mathbf r),h_{(\mathbf r)}}(e_{\alpha})\circ \varphi_{\mathbf r}^{-1}$ is a smooth family. We have proved
\begin{lemma}\label{smoothness of Glu^L}
There exists positive  constants  $ \mathsf{d},R$ such that
for any  $\zeta\in \ker D^{\widetilde{\mathbf L}}|_{b_{o}}$,   $h\in W^{k,2,\alpha}\left(\Sigma_{(R_{0})},(u_{(R_{0})})^*TM \right)$ with    $$\|\zeta\|_{\mathcal W,k,2,\alpha}\leq \mathsf{d},\;\;\;\;\;\|h-\hat h_{(\mathbf r)}\|< \mathsf{d},\;\;\;\;\;|\mathbf r|\geq R, $$
$( \varphi_{\mathbf r}^{-1})^{*}(Glu^{\widetilde{\mathbf L}}_{\fs,(\mathbf r),h'}(e_{\alpha}) )$ is smooth with respect to $(\mathbf{s}, (\mathbf{r}),h)$ \; for any $e_{\alpha}\in \ker D^{\widetilde{\mathbf L}}|_{b_o}$,  where  $h'=(\exp_{u_{(\mathbf r)}}^{-1}\circ (\exp_{u_{(\mathbf R)_{0}}}(h)\circ \varphi_{(\mathbf{r})})$. In particular $Glu^{\widetilde{\mathbf L}}_{\fs,(\mathbf r),h'}(e_{\alpha})\mid_{\Sigma(R_0)}$
 is smooth.
\end{lemma}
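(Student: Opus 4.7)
The plan is to transport the glued section via $\varphi_{(\mathbf r)}^{-1}$ back to the fixed Riemann surface $\Sigma_{(\mathbf{R}_0)}$, so that $(\mathbf r)$ becomes a smooth parameter rather than a variable that changes the underlying domain. On $\Sigma_{(\mathbf{R}_0)}$ the object we want to study is
$$(\varphi_{(\mathbf r)}^{-1})^{*}Glu^{\widetilde{\mathbf L}}_{\fs,(\mathbf r),h'}(e_{\alpha})= P^{\widetilde{\mathbf L}}_{b^{\circ}_{(\mathbf r)},b}\Bigl(I^{\widetilde{\mathbf L}}_{b^{\circ}_{(\mathbf r)}}e_{\alpha}+Q^{\widetilde{\mathbf L}}_{b^{\circ}_{(\mathbf r)}}\circ f^{\widetilde{\mathbf L}}_{\fs,h,b^{\circ}_{(\mathbf r)}}\circ I^{\widetilde{\mathbf L}}_{b^{\circ}_{(\mathbf r)}}e_{\alpha}\Bigr),$$
and I want to see that each factor is smooth in $((\mathbf r),\fs,h)$ in an appropriate Sobolev topology.

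First I would record smoothness of the building blocks. The parallel transport $P^{\widetilde{\mathbf L}}_{b^{\circ}_{(\mathbf r)},b_{(\mathbf R_{0})}}$ is built from smooth data on $M$, the smooth connection on $\widetilde{\Lambda}$, and the smooth family $j^{\circ}_{(\mathbf r)}$, so it is a smooth family of isomorphisms, as noted in the paragraph containing \eqref{eqn_r_bull_cx}. By \eqref{eqn_smo_QI} the operators $D^{\widetilde{\mathbf L}}$, $Q^{\widetilde{\mathbf L}}$ and $I^{\widetilde{\mathbf L}}_{b^{\circ}_{(\mathbf r)}}$ satisfy $\nabla_{\mathbf r}^{l}$-bounds with a loss of at most $l$ derivatives. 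Combining these with Lemma \ref{lem_smooth_F}, the map $\mathcal F((\mathbf r),\fs,h,\zeta)$ defined on $\Sigma_{(\mathbf R_{0})}$ is smooth in all four arguments.

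Second I would handle the correction $f^{\widetilde{\mathbf L}}$. By the uniqueness clause of the implicit function theorem applied to $\mathcal F_{(\mathbf r)}$, the correction on $\Sigma_{(\mathbf r)}$ agrees via $\varphi_{(\mathbf r)}^{*}$ with the one on $\Sigma_{(\mathbf R_{0})}$, and the explicit formula \eqref{eqn_psi} shows that it is $\mathcal F_{(\mathbf r)}(0,0,\mathcal H^{-1}(I^{\widetilde{\mathbf L}}_{(\mathbf r)}\zeta))$. Transported to $\Sigma_{(\mathbf R_{0})}$ this becomes an expression involving $\mathcal H_{\circ}^{-1}$, and its $(\mathbf r)$-derivatives are computed by the standard identity $\nabla_{r_{i}}\mathcal H_{\circ}^{-1}=-\mathcal H_{\circ}^{-1}\circ(\nabla_{r_{i}}\mathcal H_{\circ})\circ\mathcal H_{\circ}^{-1}$, iterated. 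The chain rule combined with \eqref{eqn_smo_QI} yields
$$\bigl\|\nabla_{\mathbf r}^{l}\bigl(P^{\widetilde{\mathbf L}}_{b^{\circ}_{(\mathbf r)},b}f^{\widetilde{\mathbf L}}_{\fs,h,b^{\circ}_{(\mathbf r)}}I^{\widetilde{\mathbf L}}_{b^{\circ}_{(\mathbf r)}}\bigr)(\zeta)\bigr\|_{k-1,2}\leq C\,\|\zeta\|_{k+l,2,\alpha},$$
with the analogous bound for the linear piece $P^{\widetilde{\mathbf L}}_{b^{\circ}_{(\mathbf r)},b}I^{\widetilde{\mathbf L}}_{b^{\circ}_{(\mathbf r)}}(\zeta)$, and the same estimates with respect to $\fs$ and $h$ follow from the smoothness already proved in Lemma \ref{smoothness of bundle-1}.

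Third I would eliminate the loss of derivatives. Because $u$ is smooth on each component and $\zeta\in\ker D^{\widetilde{\mathbf L}}|_{b_{o}}$ satisfies $\bar\p_{j_{o},u}\zeta=0$, the standard interior elliptic estimate gives $\|\zeta\|_{k+l,2,\alpha}\leq C_{l}\|\zeta\|_{k,2,\alpha}$ for every $l$. Substituting into the previous bound shows that every $\nabla_{\mathbf r}^{l}$-derivative of $(\varphi_{(\mathbf r)}^{-1})^{*}Glu^{\widetilde{\mathbf L}}_{\fs,(\mathbf r),h'}(e_{\alpha})$ is controlled in $W^{k,2,\alpha}(\Sigma_{(\mathbf R_{0})},\widetilde{\mathbf L}|_{b})$ by $\|\zeta\|_{k,2,\alpha}$, hence the map is smooth; in particular its restriction to $\Sigma(R_{0})$ is smooth. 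The main technical subtlety I anticipate is that the constants in \eqref{eqn_smo_QI} are $(\mathbf r)$-dependent, so smoothness is genuinely local in $(\mathbf r)$; one must check that the implicit-function neighborhood and the constant $\mathsf d$ in the statement can be chosen uniform on each compact range $|\mathbf r|\geq R$ by the uniform bound $\|Q^{\widetilde{\mathbf L}}_{b_{(\mathbf r)}}\|\leq\mathsf C$ from Lemma \ref{aright_inverse_after_gluing}, which is precisely what lets the implicit function step go through at every fixed $(\mathbf r)$ with the same $\delta$, $\rho$.
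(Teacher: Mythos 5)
Your proposal matches the paper's own argument: transport everything to the fixed surface $\Sigma_{(\mathbf R_0)}$ via $\varphi_{(\mathbf r)}^{-1}$, establish smoothness of $P^{\widetilde{\mathbf L}}_{b^{\circ}_{(\mathbf r)},b_{(\mathbf R_0)}}$ and the $\nabla^l_{\mathbf r}$-bounds \eqref{eqn_smo_QI}, identify $f^{\widetilde{\mathbf L}}_{a,h,b^{\circ}_{(\mathbf r)}}$ with the pullback of $f^{\widetilde{\mathbf L}}_{a,h',(\mathbf r)}$ by uniqueness of the implicit function, differentiate via the explicit formula \eqref{eqn_psi} and the resolvent identity for $\mathcal H_\circ^{-1}$, and finally absorb the loss of derivatives using elliptic regularity of $\zeta\in\ker\bar\p_{j_o,u}$. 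This is exactly the route the paper takes, so the proof is correct and essentially identical.
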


\section{\bf Estimates of derivatives with respect to gluing parameters}\label{diff gluing parameters}

In this section we prove the following theorem.
\begin{theorem}\label{thm_est_mix_deri}
Let $l\in \mathbb Z^+$ be a fixed integer.
 Let $u:\Sigma\to M$ be a $(j,J)$-holomorphic map.	Let $\fc\in (0,1)$ be a fixed constant.    For any $0<\alpha<\frac{1}{100\fc}$, there exists positive  constants  $\mathsf C_{l}, \mathsf{d},R$ such that for any  $\zeta\in \ker D^{\mathbf L}|_{b_{o}}$,   $(\kappa,\xi)\in \ker D \mathcal{S}_{(\kappa_{o},b_{o})}$ with    $$\|\zeta\|_{\mathcal W,k,2,\alpha}\leq \mathsf{d},\;\;\;\;\;\|(\kappa,\xi)\|< \mathsf{d},\;\;\;\;\;|\mathbf r|\geq R, $$ restricting to the compact set $\Sigma(R_0)$, the following estimate hold.
	\begin{equation}\label{eqn_est_L_1st}
	\left\|X_{i}  \left( Glu_{\mathbf{s},h_{(\mathbf r)},(\mathbf{r})}^{\widetilde{\mathbf L}}(\zeta) \right) 	\right\|_{C^{l}(\Sigma(R_{0}))}
	\leq \mathsf C_{l} e^{-(\fc-5\alpha)\tfrac{r_{i}}{4} } ,
	\end{equation}
	\begin{align} \label{eqn_est_L_2nd}
	&\left\|X_{i}X_{j} \left( Glu^{\widetilde{\mathbf L}}_{\mathbf{s},h_{(\mathbf r)},(\mathbf{r})}(\zeta) \right)
	\right\|_{C^{l}(\Sigma(R_{0}))} \leq   \mathsf C_{l} e^{-(\fc-5\alpha)\tfrac{r_{i}+r_{j}}{4} }
	\end{align}
	for any $X_{i}\in \{\frac{\p}{\p r_{i}},\frac{\p}{\p \tau_{i}}\},i=1,\cdots,\mathfrak{e}$,    $\mathbf{s}\in \bigotimes_{i=1}^{\mathfrak{e}} O_i$ and any $1\leq i\neq j\leq \mathfrak{e}$.
\end{theorem}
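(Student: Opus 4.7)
The section $\phi(\mathbf{r}) := Glu^{\widetilde{\mathbf L}}_{\mathbf{s},h_{(\mathbf r)},(\mathbf r)}(\zeta)$ is, by the implicit function construction \eqref{glu_solu}, annihilated by the Cauchy--Riemann operator $D^{\widetilde{\mathbf L}}|_b$ for $b=(\mathbf s,(\mathbf r),v_{\mathbf r})$ with $v_{\mathbf r}=\exp_{u_{(\mathbf r)}}(h_{(\mathbf r)})$. My plan is to differentiate this elliptic PDE in the gluing parameters, estimate the resulting source term using Theorem \ref{coordinate_decay-2} (which controls $X_i h_{(\mathbf r)}$ and the mixed derivatives $X_i X_j h_{(\mathbf r)}$), and invoke interior elliptic regularity on $\Sigma(R_0)$ to recover the claimed $C^l$-bounds.

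\textbf{Reduction to a fixed domain.} First I would use the diffeomorphisms $\varphi_{\mathbf r}$ and the parallel transports $P^{\widetilde{\mathbf L}}_{b,b_{(\mathbf R_0)}}$ of \S 4.6 to pull $\phi(\mathbf r)$ back to a section $\tilde\phi(\mathbf r)$ of the fixed bundle $\widetilde{\mathbf L}|_{b_{(\mathbf R_0)}}$ over the fixed surface $\Sigma_{(\mathbf R_0)}$. By Lemma \ref{smoothness of Glu^L}, $\tilde\phi(\mathbf r)$ is smooth in $\mathbf r$ and is annihilated by an $\mathbf r$-dependent elliptic operator $\tilde D(\mathbf r)$. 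Since $\varphi_{\mathbf r}=\mathrm{id}$ on $\Sigma(R_0/4)\supset\Sigma(R_0)$, $u_{(\mathbf r)}\equiv u$ and $j_{(\mathbf r)}\equiv j_o$ on $\Sigma(R_0)$ once $|\mathbf r|\geq R$, the restriction $\tilde D(\mathbf r)|_{\Sigma(R_0)}$ depends on $\mathbf r$ \emph{only} through $h_{(\mathbf r)}|_{\Sigma(R_0)}$.

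\textbf{First derivative.} Differentiating $\tilde D(\mathbf r)\tilde\phi(\mathbf r)=0$ in $r_i$ yields
\[
\tilde D(\mathbf r)\bigl(\partial_{r_i}\tilde\phi\bigr)=-\bigl(\partial_{r_i}\tilde D(\mathbf r)\bigr)\tilde\phi.
\]
The zeroth-order operator $\partial_{r_i}\tilde D(\mathbf r)$ is, on $\Sigma(R_0)$, linear in $\partial_{r_i}h_{(\mathbf r)}$ and its first-order jet. Theorem \ref{coordinate_decay-2} gives a global $W^{k-2,2,\alpha}$ bound of $\mathsf C\,e^{-(\fc-5\alpha)r_i/4}$, and since $h_{(\mathbf r)}$ satisfies the $J$-holomorphic equation on $\Sigma(R_0/2)\supset\supset\Sigma(R_0)$, standard interior elliptic bootstrapping upgrades this to a $C^l$ bound on $\Sigma(R_0)$ with the same exponential rate. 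Combined with the uniform $C^l$ control of $\tilde\phi$ furnished by Lemma \ref{smoothness of Glu^L}, the source in the displayed equation is $\mathsf C_l\,e^{-(\fc-5\alpha)r_i/4}$ in $C^{l-1}(\Sigma(R_0/2))$. Together with a companion $L^2$ bound on $\partial_{r_i}\tilde\phi$ (see below), the interior elliptic estimate for the Cauchy--Riemann operator $\tilde D(\mathbf r)$ on $\Sigma(R_0)\subset\subset\Sigma(R_0/2)$ delivers \eqref{eqn_est_L_1st}.

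\textbf{Second derivative and main obstacle.} For \eqref{eqn_est_L_2nd} I would differentiate once more, giving
\[
\tilde D(\mathbf r)\bigl(\partial_{r_i}\partial_{r_j}\tilde\phi\bigr)=-\bigl(\partial_{r_i}\partial_{r_j}\tilde D\bigr)\tilde\phi-\bigl(\partial_{r_i}\tilde D\bigr)\bigl(\partial_{r_j}\tilde\phi\bigr)-\bigl(\partial_{r_j}\tilde D\bigr)\bigl(\partial_{r_i}\tilde\phi\bigr),
\]
and use the mixed estimate of Theorem \ref{coordinate_decay-2} for the first term together with the first-derivative bounds just obtained for the cross terms. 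The key difficulty—and the source of the required $L^2$ bound in the previous step—lies in showing that $\partial_{r_i}\tilde\phi|_{\Sigma(R_0/2)}$ is itself exponentially small. For this I would unwind the explicit formula
\[
P^{\widetilde{\mathbf L}}_{b,b_{(\mathbf r)}}\phi(\mathbf r)=I^{\widetilde{\mathbf L}}_{(\mathbf r)}\zeta+Q^{\widetilde{\mathbf L}}_{b_{(\mathbf r)}}\circ f^{\widetilde{\mathbf L}}_{\mathbf s,h,(\mathbf r)}\circ I^{\widetilde{\mathbf L}}_{(\mathbf r)}\zeta,
\]
and differentiate piecewise: the $r_i$-derivatives of the cutoffs $\beta_{i;r}$ in $I^{\widetilde{\mathbf L}}_{(\mathbf r)}$ and in the approximate right inverse $(Q^{\widetilde{\mathbf L}}_{b_{(\mathbf r)}})'$ are supported in the neck $\{r_i/2\leq |s_i|\leq 3r_i/2\}$, where kernel elements decay like $e^{-(\fc-\alpha)|s_i|}$, giving the exponential smallness; the Neumann series defining $Q^{\widetilde{\mathbf L}}_{b_{(\mathbf r)}}$ from $(Q^{\widetilde{\mathbf L}}_{b_{(\mathbf r)}})'$ and the formula \eqref{eqn_psi} for $f^{\widetilde{\mathbf L}}$ preserve this decay, as derivatives of the $J$-holomorphic correction $h_{(\mathbf r)}$ (Theorem \ref{coordinate_decay-2}) enter only through coefficient-type multiplications. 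The bookkeeping of these contributions — verifying that no $\mathbf r$-dependence leaks out of the neck with the wrong exponent — is the principal technical obstacle, but once done the interior elliptic estimate closes both \eqref{eqn_est_L_1st} and \eqref{eqn_est_L_2nd}.
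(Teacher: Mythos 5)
Your strategy --- pull everything back to the fixed surface $\Sigma_{(\mathbf R_0)}$ via $\varphi_{\mathbf r}$ and the parallel transports of \S 4.6, differentiate the conjugated equation $\tilde D(\mathbf r)\tilde\phi(\mathbf r)=0$ in the gluing parameters, and close with interior elliptic regularity on $\Sigma(R_0)\subset\subset\Sigma(R_0/2)$ --- is a legitimate reorganization of the argument, and you are right that on $\Sigma(R_0)$ the $\mathbf r$-dependence of $\tilde D$ enters only through $h_{(\mathbf r)}$, so Theorem \ref{coordinate_decay-2} controls the source $-(\partial_{r_i}\tilde D)\tilde\phi$. You also correctly flag that interior elliptic regularity needs a companion lower-order input, namely exponential smallness of $\|\partial_{r_i}\tilde\phi\|_{L^2(\Sigma(R_0/2))}$.

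The gap is that this companion bound \emph{is} the theorem, and your sketch of it stops exactly where the work begins. The paper produces it in \S 5.1--5.2 as a global weighted Sobolev estimate on the normalization: it introduces the operators $(Q')^*$, $Q^*$, $I^*$, $H_r$, $P_r$, $\phi_r$ so that $\partial_r$ acts on a fixed function space, proves the operator estimates of Lemma \ref{lem_Q'_est}, Lemma \ref{lem_est_rI-1}, Lemma \ref{lem_est_f_r} and the multiplier estimate \eqref{local_calculation_4}, and then runs an \emph{absorption} argument: one shows $\|\partial_r(H_r f I_{(r)}\zeta)\|\le C e^{-(\fc-5\alpha)r/4}+C\mathsf d\,\|\partial_r\nu^*_{(r)}\|$, substitutes back into \eqref{partial_r_kernel_estimate}, and uses smallness of $\mathsf d$ to absorb the last term, arriving at \eqref{exp_ker_coord_est}. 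Your remark that ``the Neumann series\ldots and the formula \eqref{eqn_psi} for $f^{\widetilde{\mathbf L}}$ preserve this decay'' is precisely what those lemmas and the absorption step establish; asserting it is not the same as proving it, and the decay through $[DQ']^{-1}$ in particular (Lemma \ref{lem_Q'_est}(3),(4)) is not automatic. The same caveat applies to the second derivative: your cross terms are indeed products of exponentially small quantities, but you still need the $L^2$ smallness of $\partial_{r_i}\partial_{r_j}\tilde\phi$ on $\Sigma(R_0/2)$, which is the content of the second-order operator estimates \eqref{sec_app_right}, \eqref{sec_dq-1}, \eqref{eqn_q_2nd}, \eqref{eqn_I*_2} together with another absorption step \eqref{eqn_Hf_2}. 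In effect, your framework promotes the paper's closing sentence (``by the standard elliptic estimates we have \eqref{eqn_est_L_1st}'') to the main argument while leaving the actual decay estimates --- the bulk of \S 5 --- unproven.
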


\v

\subsection{Some operators}\label{some_oper}
\v
 \v
It is important to estimate the derivative of the gluing map with respect to $r$. To this end we need to take the derivative $\frac{\p}{\p r}$ for $ Q^{\widetilde{\mathbf L}}_{b_{(\mathbf{r})}}$ and other operators. Note that both $Q^{\widetilde{\mathbf L}}_{b_{(\mathbf{r})}}$ and $f^{\widetilde{\mathbf L}}_{(r)}$ are global operators, so we need a global estimate. On the other hand,
since the domain $\Sigma_{(r)}$ depends on $r$, in order to make the meaning of the derivative $\frac{\p}{\p r}$ for these operators clear we need transfer all operators defined over $\Sigma_{(r)}$ into a family of operators  defined over $\stackrel{\circ}{\Sigma}_1\cup \stackrel{\circ}{\Sigma}_2$, depending on $(r)$. To simplify notations we will  denote
$$\mathcal{W}^{k,2,\alpha}_{u}=\mathcal{W}^{k,2,\alpha}
(\Sigma,\widetilde{\mathbf L}|_{b_{o}})
,\;\;\; W^{k,2,\alpha}_{u}= W^{k,2,\alpha}(\Sigma,\widetilde{\mathbf L}|_{b_{o}})
,\;\;\;L_{u}^{k-1,2,\alpha}=W^{k-1,2,\alpha}(\Sigma,\widetilde{\mathbf L}|_{b_{o}}\otimes \wedge^{0,1}_{j_o}T^{*}{\Sigma}).
$$
$$
 {\mathcal W}^{k,2,\alpha}_{r,u_{(r)}} ={\mathcal W}^{k,2,\alpha}(\Sigma_{(r)},\widetilde{\mathbf L}|_{b_{(r)}}), \;\;\;\;L^{k-1,2,\alpha}_{r,u_{(r)}}=W^{k-1,2,\alpha}(\Sigma_{(r)}, \widetilde{\mathbf L}|_{b_{(r)}}\otimes \wedge^{0,1}_{j_{o}}T^{*}\Sigma_{(r)}).
$$
\v
We first define three maps
$$H_{r}:L^{k-1,2,\alpha}_{r,u_{(r)}}\to L^{k-1,2,\alpha}_u,\;\;\;\;P_{r}:L^{k-1,2,\alpha}_u\to L^{k-1,2,\alpha}_{r,u_{(r)}},\;\;\;\phi_{r}:\mathcal W^{k,2,\alpha}_u\to W^{k,2,\alpha}_{r,u_{(r)}}$$
as following. Given $\eta\in L^{k-1,2,\alpha}_{r,u_{(r)}}$ define
$$H_{r}\eta=(\beta_{1;2}(s_1)\eta(s_1,t_1),\beta_{2;2}(s_2)\eta(s_2,t_2)),$$
where $\eta(s_{i},t_i)$ is the expression of $\eta$ in terms the coordinates $(s_{i},t_{i}).$
Given
$(\eta_1,\eta_2)\in L^{k-1,2,\alpha}_u$ define
\begin{equation} \label{eqn_s.n.1}
P_{r}(\eta_{1},\eta_{2})=\left\{
\begin{array}{ll}
\eta_{i} & if\;\; p\in  \Sigma(r/2) \\
\beta_{1;2}(s_1)\eta_{1}(s_1,t_1)+\beta_{2;2}(s_{1}-2r) \eta_{2}(s_{1}-2r,t_1-\tau) & if\;\; p\in  \Sigma_{(r)}\setminus \Sigma(r/2)
\end{array}
\right..
\end{equation}
  If no danger of confusion  we will denotes \eqref{eqn_s.n.1} by $P_{r}(\eta_{1},\eta_{2})=\sum\beta_{i;2}\eta_{i}.$
Given
$(\zeta_1+\hat \zeta_0,\;\zeta_2+\hat \zeta_0)\in \mathcal  W^{k,2,\alpha}_u$ with $\supp\; \zeta_{i}\subset \Sigma(3r/2)$, define
\begin{align*}\left.\phi_{r}\left(\zeta_1+\hat \zeta_0, \zeta_{2}+\hat \zeta_0\right)\right|_{\Sigma(r/2)}&=\left.\left(\zeta_{i}+\hat \zeta_0\right)(s_i,t_i)\right|_{\Sigma(r/2)},\\
\left.\phi_r\left(\zeta_1+\hat \zeta_0,\zeta_2+\hat \zeta_0\right)\right|_{ \frac{r}{2} \leq s_{1} \leq \frac{3r}{2}}&=\left.\left(\zeta_1(s_1,t_1)+ \zeta_2(s_1- 2r,t_1-\tau)+\hat \zeta_0\right)\right|_{ \frac{r}{2} \leq s_{1} \leq \frac{3r}{2}}.
\end{align*}
By \eqref{beta_rel.} one can check that
\begin{equation}\label{equ.PH}
P_{r}  H_{r} =Id,\;\;\;\;\;\; H_{r}  P_{r} (\eta_{1},\eta_{2})=(\tilde \xi_{1},\tilde \xi_{2}).
\end{equation}
where
$$\tilde \xi_{1}=\beta_{1;2}\left(\beta_{1;2}\eta_1(s_1,t_1)+\beta_{2;2}\eta_{2}(s_1-2r, t_1-\tau)\right),$$ $$\tilde \xi_{2}=\beta_{2;2}\left(\beta_{1;2}\eta_1(s_2+2r,t_2+\tau)+\beta_{2;2}\eta_{2}(s_2, t_2)\right).
$$
In particular, $H_{r}$ is injective and $P_{r}$ is surjective.
\v
Next we introduce the following three operators
$$\left(Q'^{\widetilde{\mathbf L}}_{b_{(r)}}\right)^{*}:L^{k-1,2,\alpha}_{r,u_{(r)}}\to W^{k,2,\alpha}_u,\;\;\;\;\left(Q^{\widetilde{\mathbf L}}_{b_{(r)}}\right)^{*}:L^{k-1,2,\alpha}_{r,u_{(r)}}\to W^{k,2,\alpha}_u,\;\;\;\left(I^{\widetilde{\mathbf L}}_{(r)}\right)^{*}:\ker D^{\widetilde{\mathbf L}}|_{b_{o}}
\to \mathcal W^{k,2,\alpha}_u .$$
Given
$ \eta \in \lkar$, denote
\begin{equation} (\zeta_1,\zeta_2)=Q^{\widetilde{\mathbf L}}_{b_o}H_r\eta
.\end{equation}
Set
\begin{equation}
\zeta_{r}^{*}=(\beta_{1;r}(s_1)\zeta_{1}(s_1,t_1),\;\;\beta_{2;r}(s_2)\zeta_{2}(s_2,t_2))\in \wka.
\end{equation}
Define
\begin{align}\label{eqn_def_Q*}
&\left(Q'^{\widetilde{\mathbf L}}_{b_{(r)}}\right)^{*}\eta =\zeta_r^*,\;\;\;\; \left(Q^{\widetilde{\mathbf L}}_{b_{(r)}}\right)^*=\left(Q'^{\widetilde{\mathbf L}}_{b_{(r)}}\right)^{*}  \left(D^{\widetilde{\mathbf L}}|_{b_{(r)}} Q'^{\widetilde{\mathbf L}}_{b_{(r)}}\right)^{-1}.
\end{align}
Then we have maps
$$\left(Q'^{\widetilde{\mathbf L}}_{b_{(r)}}\right)^{*}  P_r:L^{k-1,2,\alpha}_u\to W^{k,2,\alpha}_u,\;\;\;\;\;
\left(Q^{\widetilde{\mathbf L}}_{b_{(r)}}\right)^{*}  P_r: L^{k-1,2,\alpha}_u\to W^{k,2,\alpha}_u.$$

\v\n
For any $\zeta+\hat \zeta_{0}\in \ker D^{\widetilde{\mathbf L}}|_{b_{o}},$
where $\zeta=(\zeta_1 ,\zeta_2  )\in W^{k,2,\alpha}_u,$   we set
\begin{equation}
\label{definition_h_ker}
\zeta_{r}^{*}= \left(\zeta_{1}\beta_{1;r}+\hat{\zeta}_{0},\;\; \zeta_{2}\beta_{2;r}+\hat{\zeta}_{0} \right).
\end{equation} Define
\begin{align}\label{def_I*}
&\left(I^{\widetilde{\mathbf L}}_{(r)}\right)^{*}(\zeta+\zeta_{0})=\zeta_{r}^*-\left(Q^{\widetilde{\mathbf L}}_{b_{(r)}}\right)^{*}\circ D^{\widetilde{\mathbf L}}|_{b_{(r)}}\circ\phi_{r}\zeta^*_{r}.
\end{align}
By the definition we have
$$
I^{\widetilde{\mathbf L}}_{(r)}=\phi_{r}\circ \left(I^{\widetilde{\mathbf L}}_{(r)}\right)^{*},\;\;\;\;\;\;Q^{\widetilde{\mathbf L}}_{b_{(\mathbf{r})}}=\phi_{r}\circ \left(Q'^{\widetilde{\mathbf L}}_{b_{(r)}}\right)^{*}.
$$
Define an operator $X:L^{k-1,2,\alpha}_{u}\to L^{k-1,2,\alpha}_{r,u_{(r)}}$ by
$$
X(\eta_{1},\eta_{2})=D^{\widetilde{\mathbf L}}|_{b_{(r)}}Q'^{\widetilde{\mathbf L}}_{b_{(r)}}P_{r}(\eta_{1},\eta_{2})-P_{r}(\eta_{1},\eta_{2}).
$$
Using $E^{\widetilde{\mathbf L}}_{u_{i}}=0,$ one can check that
\begin{align*}\label{def.X}
X(\eta_1,\eta_2)=\sum (\bar{\partial}\beta_{i;r}) h_{i}+\sum  \beta_{i;r}E^{\widetilde{\mathbf L}}_{u_{(r)}}h_{i}
+\left(\sum \beta_{i;r}\beta_{i;2}-1\right)\sum \beta_{i;2}\eta_{i},\end{align*}
where $(h_{1},h_{2})=Q^{\widetilde{\mathbf L}}_{b_{o}}H_{r}P_{r}(\eta_{1},\eta_{2}).$ Obviously, $supp X(\eta_1,\eta_2)\subset \{\frac{r}{2}\leq |s_{i}|\leq \frac{3r}{2}\}.$

\v
Let $b=(a, v)\in \widetilde{\mathbf{O}}_{b_o}(\delta_{o},\rho_{o})$, where $v=\exp_{u_{(r)}}h$.
We define
$$Glu_{a,h,(r)}^{\widetilde{\mathbf L},*}:=\left(I^{\widetilde{\mathbf L}}_{(r)}\right)^{*}+\left(Q^{\widetilde{\mathbf L}}_{b_{(r)}}\right)^{*}\circ f^{\widetilde{\mathbf L}}_{a,h,(r)} \circ I^{\widetilde{\mathbf L}}_{(r)}: \widetilde{\mathbf{F}}\mid_{b_o}\to W^{k,2,\alpha}(\Sigma,\widetilde{\mathbf L}|_{b_o}).$$
This definition can be extended to the gluing several nodes case in a natural way:
$$Glu_{\mathbf{s},h,(\mathbf r)}^{\widetilde{\mathbf L},*}:=\left(I^{\widetilde{\mathbf L}}_{(\mathbf{r})}\right)^{*}+\left(Q^{\widetilde{\mathbf L}}_{b_{(\mathbf{r})}}\right)^{*}\circ f^{\widetilde{\mathbf L}}_{\mathbf{s},h,(\mathbf{r})} \circ I^{\widetilde{\mathbf L}}_{(\mathbf{r})}: \widetilde{\mathbf{F}}\mid_{b_o}\to W^{k,2,\alpha}(\Sigma,\widetilde{\mathbf L}|_{b_o}).$$
It is easy to see that, restricting to $\Sigma(R_0)$, we have $Glu^{\widetilde{\mathbf L},*}_{\mathbf{s},h,(\mathbf{r})}(\zeta)=P^{\mathbf{L}}_{b,b_{(\mathbf{r})}}\circ Glu^{\widetilde{\mathbf L}}_{\mathbf{s},h,(\mathbf{r})}(\zeta)$ for any $\zeta\in D^{\widetilde{\mathbf L}}|_{b_{o}}$.

\subsection{\bf Estimates of the first derivatives}

 Let  $\eta=(\eta_{1},\cdots,\eta_{\iota})\in  L^{k-1,2,\alpha}_u.$
Denote
$$
D_{l}^{i}(R_{0})=\left\{\left.\left(s_{l}^{i},t_{l}^{i}\right)\in\Sigma\;\right| |s_{l}^{i}|\geq R_{0} \right\},\;\;\;\;D^{i}(R_{0})=\cup_{l=1}^{2}D_{l}^{i}(R_{0}).
$$
Denote $h_{(\mathbf r)}=\pi\circ Glu_{\mathbf{s},(\mathbf r)}(\kappa,\xi),\;h^{*}_{(\mathbf r)}=\pi\circ Glu^{*}_{\mathbf{s},(\mathbf r)}(\kappa,\xi)$ and $v_{(\mathbf r)}=\exp_{u_{(\mathbf r)}}(h_{(\mathbf r)}).$
Set
$$
\beta_{1,i;R}(s^{i}_{1})=\beta\left(\frac{1}{2}+\frac{r_{i}-s^{i}_1}{R}\right),\;\;\;\;\beta_{2,i;R}(s^{i}_{2})
=\sqrt{1-\beta^2\left(\frac{1}{2}-\frac{s^{i}_{2}+r_{i}}{R}\right)}.
$$
To simplify notations we denote
$$D:=D^{\widetilde{\mathbf L}}|_{b_{(r)}},\;\;Q:=Q^{\widetilde{\mathbf L}}_{b_{(r)}},\;\;\;I^{*}=\left(I^{\widetilde{\mathbf L}}_{(r)}\right)^{*},\;\;\;\;f=f^{\widetilde{\mathbf L}}_{(r)},\;\;\;\;Q':=  \left(Q^{\widetilde{\mathbf L}}_{b_{(r)}}\right)',$$$$P=P^{\widetilde{\mathbf L}}_{b_{(r)},b},\;\;\;\;\;E=E^{\widetilde{\mathbf L}}_{b_{(r)}},\;\;\;\;  (Q^{\prime})^{*}:=\left(Q^{\widetilde{\mathbf L}}_{b_{(r)}}\right)'^{*},
\;\;Q^{*}:=\left(Q^{\widetilde{\mathbf L}}_{b_{(r)}}\right)^{*}.$$

 The following Lemmas   can be proved by the same method and word-by-word as in \cite{LS-1}, we omit them.

\begin{lemma}\label{lem_Q'_est}
 For any  $(\eta_{1},\eta_{2})\in L^{k-1,2,\alpha}_u,$ the following estimates hold:
\begin{align*}
(1)& \|(Q')^{*}P_{r}(\eta_{1},\eta_{2})|_{\frac{r}{2}\leq |s_{i}|\leq \frac{3r}{2}}\|_{k,2,\alpha}
 \leq \mathsf C\left(e^{-(\fc -\alpha)\frac{r}{4}}\sum\|\eta_{i}|_{|s_{i}|\leq r+1}\|_{k-1,2,\alpha}+ \sum_{i=1}^{2} \left\|\eta_{i}|_{ \frac{r}{4} \leq |s_{i}|\leq  r+1}\right\|_{k-1,2,\alpha}\right),\\
(2)&\left\|\frac{\p}{\p r} ((Q')^{*}P_r)  (\eta_{1},\eta_{2} )\right\|_{k-1,2,\alpha} \leq \mathsf C\left(e^{-(\fc -\alpha)\frac{r}{4}}\sum\|\eta_{i}|_{|s_{i}|\leq r+1}\|_{k-1,2,\alpha}+ \sum_{i=1}^{2} \left\|\eta_{i}|_{ \frac{r}{4} \leq |s_{i}|\leq  r+1}\right\|_{k-1,2,\alpha}\right) \\
(3)&\left\|\frac{\p}{\p r}\left(H_{r}  (DQ') ^{-1} P_{r}\right)(\eta_1,\eta_{2})\right\|_{k-2,2,\alpha} \\
& \leq \mathsf C \left[ e^{-(\fc-\alpha)\frac{r}{4}} \|  (\eta_1,\eta_{2})|_{ |s_{i}|\leq  r+1}\|_{k-1,2,\alpha} + \left\|(\eta_1,\eta_{2})|_{\frac{r}{4}\leq |s_{i}|\leq  r+1}\right\|_{k-1,2,\alpha} \right]. \\
(4)& \left\|H_{r}  (DQ') ^{-1} P_{r}(\eta_1,\eta_{2})|_{\frac{r}{2}  \leq |s_{1}|\leq \frac{3r}{2}}\right\|_{k-1,2,\alpha}  \\
    &\leq \mathsf C\left[ e^{-(\fc-\alpha)\frac{r}{4}} \|  (\eta_1,\eta_{2})|_{ |s_{i}|\leq  r+1}\|_{k-1,2,\alpha} + \left\|(\eta_1,\eta_{2})|_{\frac{r}{4}\leq |s_{i}|\leq  r+1}\right\|_{k-1,2,\alpha} \right],\\
(5)&\left\|\frac{\p}{\p r}(H_{r}P_r)(\eta_{1},\eta_{2} )\right\|_{k-2,2,\alpha} \leq  \mathsf C\sum_{i=1}^{2} \left\|\eta_{i}|_{r-1 \leq |s_{i}|\leq  r+1}\right\|_{\Sigma_{i},k-1,2,\alpha}.
\end{align*}
\end{lemma}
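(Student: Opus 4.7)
The strategy is to unwind each operator via its explicit definition using cutoff functions, then reduce every estimate to two basic facts: (a) the standard right inverse $Q^{\widetilde{\mathbf L}}_{b_o}$ on the smooth components is bounded with respect to the weighted Sobolev norms, so elements of its image decay exponentially on the cylindrical ends like $e^{-(\fc-\alpha)|s|}$; (b) each $r$-dependent cutoff $\beta_{i;r}$ is supported in $\{|s_i|\le \frac{3r}{2}\}$ with $\p_r\beta_{i;r}$ supported on the annulus $\{\tfrac{r}{2}\le|s_i|\le\tfrac{3r}{2}\}$, while $\beta_{i;2}$ equals $1$ for $|s_i|\le r-1$ and its $r$-derivative is supported on $\{r-1\le|s_i|\le r+1\}$. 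The interplay between these localized cutoff derivatives and the exponential decay of $\zeta_i := Q^{\widetilde{\mathbf L}}_{b_o}H_r(\eta_1,\eta_2)$ on the ends is precisely what produces the factors $e^{-(\fc-\alpha)r/4}$ in the estimates.

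For (1), I would write $(Q')^{*}P_r(\eta_1,\eta_2)=(\beta_{1;r}\zeta_1,\beta_{2;r}\zeta_2)$ with $(\zeta_1,\zeta_2)=Q^{\widetilde{\mathbf L}}_{b_o}H_rP_r(\eta_1,\eta_2)$, and split each $\eta_i$ as the sum of its restriction to $\{|s_i|\le r/4\}$ and to $\{r/4\le |s_i|\le r+1\}$ (beyond $r+1$ it is killed by $\beta_{i;2}$). For the first piece, a standard weighted elliptic estimate for $Q^{\widetilde{\mathbf L}}_{b_o}$ (exactly as in Lemma~7 of \cite{LS-1}) shows that the value of $\zeta_i$ on $\{|s_i|\ge r/2\}$ decays like $e^{-(\fc-\alpha)r/4}$ times $\|\eta_i|_{|s_i|\le r+1}\|_{k-1,2,\alpha}$; for the second piece the estimate is tautological. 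Part (4) is analogous: $H_r(DQ')^{-1}P_r$ expands as a Neumann series $H_r\sum_{n\ge 0}(Id-DQ')^n P_r$, and since $Id-DQ'$ has norm bounded by a small constant uniformly in $r$ (by \eqref{approximate_right_inverse_estimate_2}), each term is controlled by (1)-type estimates, which are preserved under the geometric series.

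For the derivative estimates (2), (3), (5), the differentiation $\p/\p r$ hits (i) the arguments of $\beta_{i;r}$ and $\beta_{i;2}$, producing cutoff derivatives supported in thin annuli at $|s_i|\approx r$, and (ii) the shifted arguments $s_1-2r$ in $P_r$ and $H_r$, which produces tangential derivatives of $\eta_i$. In both cases the supports localize to $\{\tfrac{r}{4}\le|s_i|\le r+1\}$ (or its thinner subsets), and the weighted norm factor $e^{\alpha|s|}$ compensates only up to $e^{\alpha r}$, leaving the net factor $e^{-(\fc-\alpha)r/4}$ from the decay of $Q^{\widetilde{\mathbf L}}_{b_o}$. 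For (3) one additionally uses $\p_r(DQ')^{-1}=-(DQ')^{-1}(\p_r DQ')(DQ')^{-1}$, so the estimate reduces to bounding $\p_r D$ (which is supported where $E^{\widetilde{\mathbf L}}_{b_{(r)}}\neq 0$, hence exponentially small by \eqref{gluing surface}) and $\p_r Q'$ (controlled by part (2)-type arguments).

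The main obstacle is bookkeeping the supports precisely when one or more cutoff derivatives act simultaneously with the shift operator in $P_r$ and the expansion of $(DQ')^{-1}$; one must confirm that every resulting term either lives in a thin annulus where $\p\beta_{i;r}$ or $\p\beta_{i;2}$ is supported, or else is multiplied by $E^{\widetilde{\mathbf L}}_{b_{(r)}}$ which is exponentially small on the neck. Once this is in place, the weighted $L^2$ estimates combine with the Sobolev embedding (used to pass between $k-1$ and $k-2$ on cylindrical annuli) exactly as in the proof of the analogous lemma for the tangent bundle case in \cite{LS-1}, which is why the authors assert the proof is word-for-word the same and omit it. $\Box$
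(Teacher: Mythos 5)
The paper gives no proof of this lemma; it is dismissed with the remark that the argument is "the same method and word-by-word as in \cite{LS-1}" (there for the tangent bundle $u^*TM$ rather than $\widetilde{\mathbf L}$). Your reconstruction is consistent with that and has the right skeleton: all five estimates do hinge on (a) the exponential-decay estimate of the Appendix (Lemma \ref{tube_ker_c-1}, which is what you intend by "Lemma 7 of \cite{LS-1}") applied to the fixed right inverse $Q^{\widetilde{\mathbf L}}_{b_o}$, and (b) the observation that every $r$-dependent object — $\partial_r\beta_{i;r}$, $\partial_r\beta_{i;2}$, the shift $s_1-2r$ in $P_r$ and $H_r$, and $E^{\widetilde{\mathbf L}}_{b_{(r)}}$ — is supported in the neck annulus $\{r/4\le|s_i|\le 3r/2\}$, where either $Q^{\widetilde{\mathbf L}}_{b_o}$ has decayed or the input is already restricted to the annulus.

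Two technical points you should make precise. For (4), the Neumann series does work, but what makes the geometric series actually produce a (1)-type bound is not the estimate (1) itself iterating: it is that $Id-DQ'$ has range inside annulus-supported sections (this is the content of Lemma \ref{DQ'}), so that for $n\ge 1$ the restriction $H_r(Id-DQ')^nP_r(\cdot)|_{\frac{r}{2}\le|s_i|\le\frac{3r}{2}}$ is the whole term and the operator-norm bound $\|Id-DQ'\|\le 2/3$ applies directly; only the $n=0$ piece $H_rP_r$ needs a separate annulus restriction. The simpler route — which is morally the same — is the single-step resolvent identity $H_r(DQ')^{-1}P_r = H_rP_r + H_r(DQ')^{-1}(Id-DQ')P_r$ together with $\|(DQ')^{-1}\|\le 3$. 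For (3), the formula $\partial_r(DQ')^{-1}=-(DQ')^{-1}(\partial_r DQ')(DQ')^{-1}$ is only formal, because $(DQ')^{-1}$ acts between spaces attached to the $r$-dependent surface $\Sigma_{(r)}$; the meaningful object is $H_r(DQ')^{-1}P_r$, and its $r$-derivative is given by identity \eqref{eqn_r_DQ-1} in Section 5, which has two extra terms $H_r(DQ')^{-1}P_r\cdot\partial_r(H_rP_r)$ and $\partial_r(H_rP_r)\cdot H_r(DQ')^{-1}P_r$ that your sketch drops. Those are precisely where part (5) enters, so your flagged "bookkeeping" worry lands exactly on the omitted step.
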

\begin{lemma}\label{lem_est_rI-1} There exists a constant $\mathsf C>0,$ independent of $r,$ such that  for any $h+\hat h_{0}\in \ker D^{\mathbf L}|_{b_{o}}$.
 \begin{equation}
 \label{partial_I_estimate} \left\|\tfrac{\partial}{\partial r} I^*(h+\hat h_{0})
 \right\|_{k-1,2,\alpha}
\leq \mathsf C \|h_{i}|_{\frac{r}{2}\leq |s_i|\leq \frac{3r}{2}}\|_{k  ,2,\alpha}+\mathsf C e^{(\fc-\alpha)\frac{r}{2}} |\hat h_0|.
 \end{equation}
\end{lemma}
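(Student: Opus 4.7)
The plan is to differentiate the explicit formula
\[
I^*(\zeta + \hat\zeta_0) \;=\; \zeta_r^* \;-\; (Q^{\widetilde{\mathbf L}}_{b_{(r)}})^*\, D^{\widetilde{\mathbf L}}|_{b_{(r)}}\, \phi_r \zeta_r^*
\]
term by term with respect to $r$, and to bound each resulting piece using the preliminary estimates packaged in Lemma \ref{lem_Q'_est}. Because $\hat\zeta_0$ is $r$-independent, the direct piece is $\p_r \zeta_r^* = (\zeta_1\,\p_r\beta_{1;r},\,\zeta_2\,\p_r\beta_{2;r})$; since $\p_r\beta_{i;r}$ and all its spatial derivatives of order $\leq k-1$ are of size $O(1/r)$ and supported in $\{r/2\leq |s_i|\leq 3r/2\}$, one gets directly
\[
\|\p_r \zeta_r^*\|_{k-1,2,\alpha} \;\leq\; C\,\|\zeta_i|_{r/2\leq |s_i|\leq 3r/2}\|_{k,2,\alpha},
\]
which bounds the first term on the right-hand side of the claimed inequality with $h=\zeta$.

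For the compound piece, observe that $\phi_r \zeta_r^* = \zeta_{(r)}$ and, combining \eqref{eqn_Dh_r} with the kernel condition $D^{\widetilde{\mathbf L}}|_{b_o}(\zeta + \hat\zeta_0) = 0$ together with the fact that $D^{\widetilde{\mathbf L}}|_{b_o} = D^{\widetilde{\mathbf L}}|_{b_{(r)}}$ on $\Sigma(r/2)$, one obtains that $\eta_{(r)} := D^{\widetilde{\mathbf L}}|_{b_{(r)}}\zeta_{(r)}$ is supported on the neck and decomposes as
\[
\eta_{(r)} \;=\; \sum_i (\bar\p\beta_{i;r})\zeta_i \;+\; \sum_i \beta_{i;r}\,E^{\widetilde{\mathbf L}}_{b_{(r)}}\zeta_i \;+\; E^{\widetilde{\mathbf L}}_{b_{(r)}}\hat\zeta_0.
\]
I then expand $\p_r[(Q^{\widetilde{\mathbf L}}_{b_{(r)}})^* \eta_{(r)}]$ by the Leibniz rule through the factorization $(Q^{\widetilde{\mathbf L}}_{b_{(r)}})^* = (Q'^{\widetilde{\mathbf L}}_{b_{(r)}})^*\,[D^{\widetilde{\mathbf L}}|_{b_{(r)}} Q'^{\widetilde{\mathbf L}}_{b_{(r)}}]^{-1}$, and estimate each resulting factor by parts (1)--(5) of Lemma \ref{lem_Q'_est}. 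The two summands involving $\zeta_i$ are absorbed into $\|\zeta_i|_{\text{neck}}\|_{k,2,\alpha}$, using that the exponential decay rate $\fc$ of $E^{\widetilde{\mathbf L}}_{b_{(r)}}$ on the neck beats the weight $e^{\alpha|s|}$ because $\alpha<\fc$. The summand $E^{\widetilde{\mathbf L}}_{b_{(r)}}\hat\zeta_0$ is the only one that involves the limit $\hat\zeta_0$; the pointwise bound $|E^{\widetilde{\mathbf L}}_{b_{(r)}}|\leq C\,e^{-\fc\min(s_1,\,2r-s_1)}$ on $\{r/2\leq s_1\leq 3r/2\}$, integrated against the weight $e^{2\alpha|s|}$ over the neck of length of order $r$, produces the exponential factor $\mathsf C\,e^{(\fc-\alpha)\frac{r}{2}}|\hat h_0|$ as asserted.

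The main technical obstacle is the bookkeeping of $r$-derivatives through the nested operators $(Q^{\widetilde{\mathbf L}}_{b_{(r)}})^*$, $D^{\widetilde{\mathbf L}}|_{b_{(r)}}$, and $\phi_r$, each of which depends on $r$ in a different way: $\phi_r$ through the neck identification, $D^{\widetilde{\mathbf L}}|_{b_{(r)}}$ through $j_{(r)}$ and $u_{(r)}$, and $(Q^{\widetilde{\mathbf L}}_{b_{(r)}})^*$ through both the cutoffs $\beta_{i;r}$ and the inverse $[DQ']^{-1}$. Lemma \ref{lem_Q'_est} is tailored precisely to these derivatives: the right-hand sides there depend only on neck-restricted norms of the input, so the neck-concentration of $\eta_{(r)}$ is preserved throughout the estimates, and the two-term bound asserted in the present lemma closes cleanly.
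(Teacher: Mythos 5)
The paper itself does not carry a proof of this lemma: immediately before Lemmas~\ref{lem_Q'_est} and~\ref{lem_est_rI-1} the authors state that these ``can be proved by the same method and word-by-word as in \cite{LS-1}'' and omit them, so there is no in-text argument to compare against. That said, your plan --- differentiating the explicit formula $I^*(\zeta+\hat\zeta_0)=\zeta_r^*-(Q^{\widetilde{\mathbf L}}_{b_{(r)}})^*D^{\widetilde{\mathbf L}}|_{b_{(r)}}\phi_r\zeta_r^*$, recognizing that $D^{\widetilde{\mathbf L}}|_{b_{(r)}}\phi_r\zeta_r^*=D^{\widetilde{\mathbf L}}|_{b_{(r)}}\zeta_{(r)}$ reduces via \eqref{eqn_Dh_r} and the kernel relation to the neck-supported expression $\sum(\bar\partial\beta_{i;r})\zeta_i+\sum\beta_{i;r}E^{\widetilde{\mathbf L}}_{b_{(r)}}\zeta_i+E^{\widetilde{\mathbf L}}_{b_{(r)}}\hat\zeta_0$, and then distributing $\p_r$ through $(Q^{\widetilde{\mathbf L}}_{b_{(r)}})^*$ with the factorization $(Q')^*[DQ']^{-1}$ controlled by Lemma~\ref{lem_Q'_est} --- is exactly the gluing-analysis strategy the citation points to, and correctly separates the $\zeta_i$-terms (controlled by the neck norm) from the sole $\hat\zeta_0$-term (controlled by the exponential decay of $E^{\widetilde{\mathbf L}}_{b_{(r)}}$).

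Two remarks. First, the exponential factor: your own computation --- integrating $e^{2\alpha|s|}$ against $e^{-2\fc\min(s_1,2r-s_1)}$ over the neck --- in fact yields the \emph{decaying} factor $e^{-(\fc-\alpha)\frac{r}{2}}|\hat h_0|$, not the growing $e^{(\fc-\alpha)\frac{r}{2}}|\hat h_0|$ as printed in the lemma (and echoed in your last line with ``as asserted''). The printed positive exponent must be a sign typo: with a growing exponential the lemma is vacuous, and in the subsequent proof of~\eqref{exp_ker_coord_est} the contribution from $\p_r I^*$ must decay for the final estimate $\|\p_r\nu^*_{(r)}\|_{k-1,2,\alpha}\leq Ce^{-(\fc-5\alpha)r/4}$ to close. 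You should state the decaying version explicitly rather than deferring to the asserted sign. Second, a bookkeeping caveat worth spelling out: $\eta_{(r)}$ lives in the $r$-dependent space $L^{k-1,2,\alpha}_{r,u_{(r)}}$, so the Leibniz expansion of $\p_r[(Q^{\widetilde{\mathbf L}}_{b_{(r)}})^*\eta_{(r)}]$ should pass through the fixed space $L^{k-1,2,\alpha}_u$ by writing $(Q^{\widetilde{\mathbf L}}_{b_{(r)}})^*\eta_{(r)}=(Q^{\widetilde{\mathbf L}}_{b_{(r)}})^*P_r(H_r\eta_{(r)})$ and applying $\p_r$ to each of the three $r$-dependent ingredients; this is precisely what parts (1)--(5) of Lemma~\ref{lem_Q'_est} are formatted to handle, but the intermediate step deserves to be made explicit since you invoke it implicitly.
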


\v\v
Denote $\nu_{(r)}=P^{-1} \circ Glu^{\widetilde{\mathbf L}}_{\mathbf{s},h_{(r)},(r)}(\zeta),$ and $\nu^*_{(r)}:=Glu^{\widetilde{\mathbf L},*}_{\mathbf{s}, h_{(r)},(r)}(\zeta)$.  Obviously,
$\nu_{(r)}=\phi_{r}(\nu^*_{(r)})$
\begin{equation}\label{eqn_def_nu}
\nu_{(r)}=I_{r}(\zeta)+Q\circ f\circ I_{(r)}(\zeta),\;\;\;\;\;\nu^{*}_{(r)}=I^{*}_{(r)}(\zeta)+Q^{*}\circ f\circ I_{r}(\zeta).
\end{equation}
Set $h_{r}^{*}= (h_{1}\beta_{1;r}+\hat{h}_{0}, h_{2}\beta_{2;r}+\hat{h}_{0} ).$ Since $u$ (resp. $v_{(r)}$) is a $(j_{o},J)$ (resp. $(j_{\mathbf{s}},J)$) holomorphic map,  we have
\begin{equation}\label{eqn_decay_F}
\sum_{i+j=d}\left|\frac{\p^{i+j} E}{\p s^{i}\p t^{j}}\right|+\sum_{i+j=d}\left|\frac{\p^{i+j} E_{b}^{\widetilde{\mathbf L}}}{\p s^{i}\p t^{j}}\right|\leq C_{d} e^{-\fc|s_{i}|},\;\;\;\;R_{0}\leq |s_{i}|\leq r.
\end{equation}
Taking derivative $D$ on $\nu_{(r)}$ we have
\begin{equation}\label{eqn_f_zeta}
f\circ I_{(r)}(\zeta)=\bar{\p}_{j_{o}} (\nu_{(r)})+E\nu_{(r)}.
\end{equation}
On the other hand, by $D^{\widetilde{\mathbf L}}_{b}(P\nu_{(r)})=0$ we have
\begin{equation}\label{eqn_nu_eq}
\bar{\p}_{j_{o}} (\nu_{(r)})+P^{-1}\left(\nabla_{\bar{\p}_{j_{o}}}(P) (\nu_{(r)})+E^{\widetilde{\mathbf L}}_{b}P\nu_{(r)}\right)=0.
\end{equation}
By the exponential decay of $u_{(r)}$ and $v_{(r)}$ we have
\begin{equation}\label{eqn_P_eq_est}
\left|\nabla_{\bar{\p}_{j_{o}}}(P)  \right|\leq C(|d u_{(r)}| +|d v_{(r)}| )\leq  C_{d} e^{-\fc|s_{i}|},\;\;\;\;R_{0}\leq |s_{i}|\leq r.
\end{equation}
By  \eqref{eqn_decay_F} and \eqref{eqn_P_eq_est} we conclude that $\nu_{(r)}$ satisfies the assumption of Lemma \ref{tube_ker} in Appendix. Then Lemma \ref{tube_ker} gives us
\begin{equation}\label{eqn_dec_nu_r}
\left\|\nu_{(r)}|_{\frac{r}{4}\leq |s_{i}|\leq \frac{7r}{4}}\right\|_{k,2,\alpha}\leq C e^{-(\fc-\alpha)\frac{r}{4}}.
\end{equation}
It follows from \eqref{eqn_f_zeta} and \eqref{eqn_dec_nu_r} that
\begin{lemma}\label{lem_est_f_r}
\begin{equation}\label{decay_exp_J_map}
\left\|H_{r}f\circ I_{(r)}(\zeta)|_{  |s_{i}|\geq \frac{r}{4}}\right\|_{k-1,2,\alpha }\leq   \mathsf Ce^{-(\fc-\alpha)\frac{r}{4}} (1+\|\zeta\|_{k,2,\alpha }),\;\;\;\;\forall \;r\geq 8R_{0}.
\end{equation}
\end{lemma}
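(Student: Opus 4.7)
The plan is to read the lemma as an immediate consequence of the identity \eqref{eqn_f_zeta} and the decay estimate \eqref{eqn_dec_nu_r}, together with standard control of lower-order norms and the exponential decay of $E$ from \eqref{eqn_decay_F}. More precisely, \eqref{eqn_f_zeta} gives
\[
f\circ I_{(r)}(\zeta)=\bar{\p}_{j_o}\nu_{(r)}+E\,\nu_{(r)},
\]
so estimating $H_r(f\circ I_{(r)}(\zeta))$ on $\{|s_i|\geq r/4\}$ reduces to bounding $H_r(\bar{\p}_{j_o}\nu_{(r)})$ and $H_r(E\nu_{(r)})$ on this region in the $W^{k-1,2,\alpha}$ norm.

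First I would localize: since $H_r$ multiplies by the cutoffs $\beta_{i;2}$, which are supported in $\{|s_i|\leq r+1\}$, the restriction $H_r(\,\cdot\,)|_{|s_i|\geq r/4}$ is supported in the annulus $\{r/4\leq |s_i|\leq r+1\}\subset\{r/4\leq |s_i|\leq 7r/4\}$. On this annulus, \eqref{eqn_dec_nu_r} (obtained by applying Lemma \ref{tube_ker} to the elliptic equation \eqref{eqn_nu_eq} satisfied by $\nu_{(r)}$, using the decay \eqref{eqn_P_eq_est} of the coefficient $\nabla_{\bar{\p}_{j_o}}(P)$ and \eqref{eqn_decay_F}) yields
\[
\bigl\|\nu_{(r)}|_{r/4\leq|s_i|\leq 7r/4}\bigr\|_{k,2,\alpha}\leq \mathsf C\,e^{-(\fc-\alpha)\frac{r}{4}}(1+\|\zeta\|_{k,2,\alpha}).
\]

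Next I would estimate the two pieces separately. The $\bar{\p}_{j_o}$-term is controlled by a $W^{k,2,\alpha}\to W^{k-1,2,\alpha}$ loss of one derivative, which is absorbed into the $W^{k,2,\alpha}$ bound on $\nu_{(r)}$ over a slightly larger annulus. For the $E\nu_{(r)}$-term, \eqref{eqn_decay_F} shows $E$ and all its derivatives are uniformly bounded (in fact super-exponentially small) on the annulus, so Sobolev multiplication by $E$ is a bounded operator and produces the same exponential factor; similarly $H_r$ is just multiplication by a smooth bounded cutoff and hence bounded in $W^{k-1,2,\alpha}$. Adding the two contributions produces the claimed bound $\mathsf C\,e^{-(\fc-\alpha)r/4}(1+\|\zeta\|_{k,2,\alpha})$.

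The main conceptual step, \eqref{eqn_dec_nu_r}, has already been pushed through before the statement of the lemma, so the only remaining work is bookkeeping: verifying that the support of $H_r$ lies in the annulus where that exponential decay applies, and checking that the one-derivative loss in the $\bar{\p}_{j_o}$ term together with the $E$-multiplication do not destroy the weighted $(k{-}1,2,\alpha)$ norm. The hard part is therefore not in this lemma itself but in \eqref{eqn_dec_nu_r}, whose proof relies on the interior decay result for solutions of the perturbed Cauchy--Riemann equation stated in the appendix.
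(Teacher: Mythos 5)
Your proposal is correct and takes essentially the same route as the paper: the paper derives \eqref{eqn_dec_nu_r} from \eqref{eqn_nu_eq}, \eqref{eqn_decay_F}, \eqref{eqn_P_eq_est} and Lemma \ref{tube_ker}, and then states that Lemma \ref{lem_est_f_r} "follows from \eqref{eqn_f_zeta} and \eqref{eqn_dec_nu_r}," which is precisely the deduction you carry out. Your version is if anything slightly more careful in making the $(1+\|\zeta\|_{k,2,\alpha})$ dependence explicit in \eqref{eqn_dec_nu_r}, which the paper leaves implicit.
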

\v
Similar Lemma 4.7 in \cite{LS-1}, we have
\begin{lemma}\label{lem_xi^*_app_1}
There exists a constant $\mathsf C>0$ such that for any $\zeta\in \ker D^{\widetilde {\mathbf L}}_{b_{o}}$   we have
$$\left\|\frac{\p}{\p s_{i}}\nu^*_{(r),i}|_{\frac{r}{2}\leq |s_{i}|\leq \frac{3r}{2}}\right\|_{k-1,2,\alpha}\leq \mathsf C e^{-(\fc-5\alpha)\frac{r}{4}}(\|\zeta\|_{k,2,\alpha}+1).$$
\end{lemma}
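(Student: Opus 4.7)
\textbf{Proof Plan for Lemma \ref{lem_xi^*_app_1}.} I will follow the strategy of Lemma 4.7 in \cite{LS-1}. The starting point is the identity $\nu^{*}_{(r)}=I^{*}(\zeta)+Q^{*}fI_{(r)}\zeta$, which by \eqref{def_I*} gives
\begin{equation*}
\nu^{*}_{(r),i}=\beta_{i;r}\zeta_{i}+\hat\zeta_{0}+\bigl(Q^{*}[fI_{(r)}\zeta-D\phi_{r}\zeta^{*}_{r}]\bigr)_{i}.
\end{equation*}
Using the definition \eqref{eqn_def_Q*} of $Q^{*}=(Q')^{*}(DQ')^{-1}$, one has $(Q^{*}\mu)_{i}=\beta_{i;r}\vartheta_{i}$ where $(\vartheta_{1},\vartheta_{2})=Q_{b_{o}}H_{r}(DQ')^{-1}\mu$. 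Thus $\nu^{*}_{(r),i}-\hat\zeta_{0}=\beta_{i;r}(\zeta_{i}+\vartheta_{i})$, where $\vartheta_{i}$ is explicitly given by $Q_{b_{o}}$ applied to the input $H_{r}(DQ')^{-1}\bigl[fI_{(r)}\zeta-D\phi_{r}\zeta^{*}_{r}\bigr]$.

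Differentiating gives $\tfrac{\partial}{\partial s_{i}}\nu^{*}_{(r),i}=\beta'_{i;r}(\zeta_{i}+\vartheta_{i})+\beta_{i;r}\tfrac{\partial}{\partial s_{i}}(\zeta_{i}+\vartheta_{i})$, and I would estimate the weighted $k-1,2,\alpha$ norm on $\{r/2\le|s_{i}|\le 3r/2\}$ of each piece. For the $\zeta_{i}$ contribution: writing $\zeta_{i}=\hat\zeta_{0}+\tilde\zeta_{i}$, the decay $|\tilde\zeta_{i}|\le Ce^{-\fc s_{i}}$ of the holomorphic part gives $\|\beta_{i;r}\partial_{s_{i}}\zeta_{i}\|_{k-1,2,\alpha}\le C e^{-(\fc-\alpha)r/2}$, which is dominated by $Ce^{-(\fc-5\alpha)r/4}$. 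For the $\vartheta_{i}$ contribution I would split the input $fI_{(r)}\zeta-D\phi_{r}\zeta^{*}_{r}$: Lemma \ref{lem_est_f_r} controls $H_{r}fI_{(r)}\zeta$ by $Ce^{-(\fc-\alpha)r/4}$, while the direct computation $D\phi_{r}\zeta^{*}_{r}=\sum(\bar\partial\beta_{i;r})\zeta_{i}+\sum\beta_{i;r}E^{\widetilde{\mathbf L}}_{b_{(r)}}\zeta_{i}+\cdots$ combined with \eqref{eqn_decay_F} and Lemma \ref{lem_Q'_est} yields the analogous bound for the $\vartheta_{i}$-terms and their derivatives in the neck.

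The main technical obstacle is the cancellation of the non-decaying leading part. Indeed, $\zeta_{i}\not\to 0$ but tends to $\hat\zeta_{0}$, and $\beta'_{i;r}=O(1/r)$ is only polynomially small, so $\beta'_{i;r}\zeta_{i}$ carries a piece $\beta'_{i;r}\hat\zeta_{0}$ that is not exponentially small in $r$. This must be canceled against the corresponding leading contribution of $\vartheta_{i}$, whose input $D\phi_{r}\zeta^{*}_{r}$ has a leading term $\bar\partial(\beta_{1;r}+\beta_{2;r})\hat\zeta_{0}$. The cancellation is encoded precisely in the structural identity $\beta_{i;r}^{2}+\beta_{j;r}^{2}(\cdot-2r)=1$ in \eqref{beta_rel.}, via which $Q^{*}\bigl[\bar\partial(\beta_{1;r}+\beta_{2;r})\hat\zeta_{0}\bigr]$ contributes exactly $-\beta'_{i;r}\hat\zeta_{0}$ modulo exponentially small remainders, using that $Q_{b_{o}}H_{r}\bar\partial(\beta_{j;r})\hat\zeta_{0}$ may be computed explicitly since $\hat\zeta_{0}$ is covariantly constant on the cylinder.

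Putting the decaying remainders together, weighted by $e^{\alpha|s_{i}|}$ in the neck and absorbing powers of $r$ into the small exponent margin between $\fc-\alpha$ and $\fc-5\alpha$, one obtains the claimed bound $\mathsf C e^{-(\fc-5\alpha)r/4}(\|\zeta\|_{k,2,\alpha}+1)$. The $+1$ accounts for the $\hat\zeta_{0}$-dependence implicit in $\|\zeta\|_{\mathcal W,k,2,\alpha}$, and the exponent loss $\fc\to\fc-5\alpha$ compensates for the powers of $r$ and the successive applications of weighted Sobolev embedding used in passing between $\phi_{r}\nu^{*}_{(r)}$ on $\Sigma_{(r)}$ and the component restrictions $\nu^{*}_{(r),i}$ on $\Sigma_{i}$.
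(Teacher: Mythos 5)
Your decomposition $\nu^{*}_{(r),i}-\hat\zeta_{0}=\beta_{i;r}(\zeta_{i}+\vartheta_{i})$ and the overall plan of attacking each piece via Lemma~\ref{lem_est_f_r}, the decay \eqref{eqn_decay_F}, and the $Q$-estimates in Lemma~\ref{lem_Q'_est} are on the right track, and this does indeed follow the route suggested by the paper (adapt Lemma~4.7 of~\cite{LS-1}, using $\nu^*_{(r)}=I^*_{(r)}(\zeta)+Q^*fI_{(r)}\zeta$).

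However, the ``main technical obstacle'' you identify is spurious, and it comes from a misreading of the notation. In~\eqref{definition_h_ker} the kernel element is written $\zeta+\hat\zeta_0$ with $\zeta=(\zeta_1,\zeta_2)\in W^{k,2,\alpha}_u$; so $\zeta_i$ is the \emph{decaying} part and tends to $0$ at the node, not to $\hat\zeta_0$. In particular $\beta'_{i;r}\zeta_i$ is already exponentially small on $\{r/2\leq|s_i|\leq 3r/2\}$ by Lemma~\ref{tube_ker_c-1}, and there is no non-decaying piece $\beta'_{i;r}\hat\zeta_0$ to cancel, because in the definition of $\zeta^*_r$ the constant $\hat\zeta_0$ is added \emph{outside} the cut-off product, not multiplied by $\beta_{i;r}$. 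Correspondingly, the leading term $\bar\partial(\beta_{1;r}+\beta_{2;r})\hat\zeta_0$ you attribute to $D\phi_r\zeta^*_r$ is not there: the computation recorded near the end of the paper gives
$D\phi_{\mathbf r}(\zeta_{(\mathbf r)}^{*})=\sum (\bar{\partial}\beta_{i;r}) \zeta_{i}+\sum  \beta_{i;r}(E-E^{\widetilde{\mathbf L}}_{u_{i}})(\zeta_{i}+\hat \zeta_{0})$,
and $\hat\zeta_0$ appears only with the exponentially small factor $E-E^{\widetilde{\mathbf L}}_{u_i}$, never paired with $\bar\partial\beta_{i;r}$ (since $\hat\zeta_0$ is constant on the neck and $\bar\partial\hat\zeta_0=0$ there). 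Therefore the cancellation mechanism you build around \eqref{beta_rel.} is not needed and is not part of the argument; each of the terms $\beta'_{i;r}\zeta_i$, $\beta_{i;r}\partial_{s_i}\zeta_i$, $\beta'_{i;r}\vartheta_i$, $\beta_{i;r}\partial_{s_i}\vartheta_i$ can be bounded directly, using Lemma~\ref{tube_ker_c-1} for the first two and Lemmas~\ref{lem_est_f_r}, \ref{lem_Q'_est} together with the support/decay of $D\phi_r\zeta^*_r$ for the last two, yielding $Ce^{-(\fc-\alpha)r/4}$, which is stronger than the claimed $e^{-(\fc-5\alpha)r/4}$ margin. Since a substantial part of your argument hinges on this nonexistent cancellation, this is a genuine gap, even though the surrounding outline is sound.
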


\v
An estimate similar to Lemma 4.8 in \cite{LS-1} can be proved:

\begin{lemma}
\begin{equation}
\label{local_calculation_4}
\left\|H_{r}\circ D\phi_{r}\left(\frac{\partial}{\partial r} \nu^*_{(r)}\right)\right\|_{k-2,2,\alpha} \leq \mathsf{C}\left( \mathsf {d}\left\|\frac{\partial}{\partial r} \nu^*_{(r)}\right\|_{k-1,2,\alpha} + e^{-(\fc-5\alpha)\frac{r}{4} } \right).
\end{equation}
\end{lemma}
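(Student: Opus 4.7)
The plan is to differentiate the identity $D^{\widetilde{\mathbf L}}|_{b_{(r)}}\nu_{(r)} = f^{\widetilde{\mathbf L}}_{(r)}\circ I^{\widetilde{\mathbf L}}_{(r)}(\zeta)$ from \eqref{eqn_f_zeta} with respect to the gluing parameter $r$ and to combine it with the relation $\nu_{(r)}=\phi_r(\nu^*_{(r)})$. Since $D=\bar\partial_{j_o}+E$ with $\bar\partial_{j_o}$ being $r$-independent in the cylindrical coordinates, a direct differentiation yields
\begin{equation*}
D\phi_r(\partial_r \nu^*_{(r)}) \;=\; \partial_r\bigl(f\circ I_{(r)}\zeta\bigr) \;-\; (\partial_r E)\,\nu_{(r)} \;-\; D(\partial_r \phi_r)\,\nu^*_{(r)}.
\end{equation*}
Applying $H_r$ reduces the task to estimating each of the three right-hand terms, modulo a correction that produces the $\mathsf{d}$-proportional term in the conclusion.

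For the middle term, $\partial_r E$ is supported in the neck $\{r/2\le|s_i|\le 3r/2\}$ and obeys $|\partial_r E|\le C e^{-\fc r/2}$ by \eqref{eqn_decay_F} together with the exponential decay of $\partial_r u_{(r)}$, while $\|\nu_{(r)}|_{r/2\le|s_i|\le 3r/2}\|_{k,2,\alpha}\le C e^{-(\fc-\alpha)r/4}$ by \eqref{eqn_dec_nu_r}; these combine to a bound much smaller than the target exponential. For the third term, an explicit calculation shows $\partial_r\phi_r$ vanishes off the neck and on the neck produces $-2\partial_{s_2}\nu^*_{(r),2}(s_1-2r,t_1-\tau)$ at the shifted point; by Lemma \ref{lem_xi^*_app_1} one has $\|\partial_{s_i}\nu^*_{(r),i}|_{r/2\le|s_i|\le 3r/2}\|_{k-1,2,\alpha}\le \mathsf Ce^{-(\fc-5\alpha)r/4}$, and applying the first-order operator $D$ costs at most one derivative, giving the desired $W^{k-2,2,\alpha}$ bound. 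The first term $\partial_r(f\circ I_{(r)}\zeta)$ requires the explicit implicit-function formula \eqref{eqn_psi} for $f$: combining the neck decay of $f\circ I_{(r)}\zeta$ supplied by Lemma \ref{lem_est_f_r} with the smoothness estimates \eqref{eqn_smo_QI} on $\partial_r Q$ and $\partial_r I$ restricted to the neck (and with Theorem \ref{coordinate_decay-2} to control the $r$-dependence transmitted through $h_{(r)}$), this contribution is again dominated by $\mathsf C e^{-(\fc-5\alpha)r/4}$.

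The $\mathsf d\,\|\partial_r \nu^*_{(r)}\|_{k-1,2,\alpha}$ remainder enters when one must switch between $D^{\widetilde{\mathbf L}}|_{b_{(r)}}$ used in the identity above and the perturbed operator $D^{\widetilde{\mathbf L}}|_b$ that appears in the implicit-function construction of $f$: by \eqref{eqn_diff_D} this switch introduces an error of order $(|a-a_o|+\|h_{(r)}\|_{k,2,\alpha,r})\le C\mathsf d$ acting on $\phi_r(\partial_r \nu^*_{(r)})$, and after one $D$ is applied and $H_r$ taken, it yields precisely the stated $\mathsf d$-term in the $W^{k-2,2,\alpha}$ norm. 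The principal obstacle I anticipate is the bookkeeping in the third-term estimate: the shift $s_1\to s_1-2r$ in the neck must be carefully matched against the exponential weight $e^{\alpha|s|}$ so that Lemma \ref{lem_xi^*_app_1} delivers exactly the factor $e^{-(\fc-5\alpha)r/4}$ without consuming more than the $5\alpha$ slack, and the analogous matching must be carried out in the first term, where $\partial_r(f\circ I_{(r)}\zeta)$ inherits an $r$-dependence from $h_{(r)}$ that can only be closed off by invoking Theorem \ref{coordinate_decay-2} together with the weighted Sobolev structure introduced in \S\ref{s_intro_2}.
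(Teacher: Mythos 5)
Your decomposition diverges from the paper at the crucial first step: which identity to differentiate. You differentiate the residual equation $D^{\widetilde{\mathbf L}}|_{b_{(r)}}\nu_{(r)} = f\circ I_{(r)}(\zeta)$ (from \eqref{eqn_f_zeta}) and therefore need to control $\partial_r\bigl(f\circ I_{(r)}\zeta\bigr)$. The paper instead differentiates the \emph{exact zero} identity $\Lambda_r := P^{-1}\circ D^{\widetilde{\mathbf L}}_b\circ P(\nu_{(r)}) = 0$, which holds identically in $r$ by construction of the glued solution; after extending $u_{(r)},h_{(r)},\nu_{(r)}$ to cut-off families $\tilde u_{(r)},\tilde h_{(r)},\tilde\nu_{(r)}$ defined on the fixed surface $\Sigma_1$ (so that $\partial_r$ is unambiguous), one gets $\partial_r(\beta_{1;2}\tilde\Lambda_r)=0$, and the resulting expansion involves only Theorem \ref{coordinate_decay-2} for the $\nabla_r(D_{\tilde b}\circ P)$ term, Lemma \ref{lem_xi^*_app_1} for the shift term $-2\nabla_{s_2}\nu^*_{(r),2}$ coming from $\partial_r\phi_r$, and the operator comparison \eqref{eqn_diff_D} for the $\mathsf d$-term when reverting $P^{-1}D_b P$ to $D^{\widetilde{\mathbf L}}|_{b_{(r)}}$. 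No term $\partial_r(f\circ I_{(r)}\zeta)$ appears at all.

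This matters because the estimate $\|\partial_r(f\circ I_{(r)}\zeta)\| \le Ce^{-(\fc-5\alpha)r/4}$ you invoke is not available as an input. It is exactly the content of \eqref{estimate_operator_partial_I}, which in the paper is \emph{derived from} the present lemma via \eqref{operator_partial_right_inverse_kernel}, so using it here is circular. Lemma \ref{lem_est_f_r} controls $H_r f\circ I_{(r)}\zeta$ in the neck but says nothing about its $r$-derivative, and the smoothness estimates \eqref{eqn_smo_QI} bound $\nabla^l_{\mathbf r}$ of $Q,D,I$ in fixed Sobolev norms with constants $C(k,\alpha,r)$ that grow with $r$ rather than decaying exponentially. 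Your treatment of the other two terms — $(\partial_r E)\nu_{(r)}$ via \eqref{eqn_decay_F} and \eqref{eqn_dec_nu_r}, and $D(\partial_r\phi_r)\nu^*_{(r)}$ via Lemma \ref{lem_xi^*_app_1} — is essentially the paper's, and you also correctly locate the $\mathsf d$-term in the switch between $D^{\widetilde{\mathbf L}}|_{b_{(r)}}$ and the perturbed operator via \eqref{eqn_diff_D}. But without an independent way to bound $\partial_r(f\circ I_{(r)}\zeta)$ the argument does not close; replacing your starting identity with $\Lambda_r=0$ removes the problematic term entirely.
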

\n \begin{proof} We estimate   $\left\|\beta_{1;2}  D\phi_{r}\left(\frac{\partial}{\partial r} \nu^*_{(r)}\right)\right\|_{k-1,2,\alpha}.$\;  The estimates of  $\left\|\beta_{2;2} D\phi_{r}\left(\frac{\partial}{\partial r} \nu^*_{(r)}\right)\right\|_{k-1,2,\alpha}$ is the same.  As in \cite{LS-1} we construct two smooth family  $\tilde{u}_{(r)},\tilde{h}_{(r)}$, depending on $(r)$, defined over $\Sigma_{1}$ as follows:
\begin{align}\label{defn-1}
\tilde{u}_{(r)}&=\left\{
\begin{array}{ll}
u_{(r)}, & in \;\;\Sigma_{1}(r+1), \\
u_1(q)+\beta(r+2-s_1) (u_{(r)}(s_1,t_1)-u_{1}(q)), & if \ s_{1} \geq r+1
\end{array}
\right.
\\\label{defn-h1}
\tilde{h}_{(r)}&=\left\{
\begin{array}{ll}
h_{(r)}, & in \ \  \Sigma_{1}(r+1), \\
\beta(r+2-s_1) h_{(r)}, & if\ s_{1}\geq r+1
\end{array}
\right..
\end{align} Set $\tilde{v}_{(r)}=\exp_{\tilde{u}_{(r)}}(\tilde{h}_{(r)}) $, $\tilde b=(\mathbf{s},\tilde{v}_{(r)})$ and $\tilde b_{(r)}=(\mathbf{s}_{o},\tilde{u}_{(r)}).$ We can define $\tilde{\nu}_{(r)}$ as the definition of $\tilde{h}_{(r)}.$ So the meaning of $\frac{\p \tilde u_{(r)}}{\p r} ,\frac{\p \tilde v_{(r)}}{\p r} $ and $\nabla_{\frac{\p}{\p r}}   \tilde \nu_{(r)} $ is clear.
Set
$$ \Lambda_{r}:= P^{-1} \circ D^{\widetilde{\mathbf L}}_{b}\circ P(\nu_{(r)}),\;\;\;\;\tilde\Lambda_{r}:=P^{-1}  \circ D^{\widetilde{\mathbf L}}_{\tilde b}\circ P(\tilde \nu_{(r)}).$$  Obviously, $\Lambda_{r}=0$ and $\Lambda_{r}|_{\Sigma(r+1)}=\tilde{\Lambda}_{r}|_{\Sigma(r+1)}.$
We calculate $\frac{\p}{\p r}\left(\beta_{1;2}\Lambda_{r}\right)$:
  \begin{align}\label{eqn_diff_Lam}
  \frac{\p}{\p r}\left(\beta_{1;2}\Lambda_{r}\right)&=  \frac{\p}{\p r}\left(\beta_{1;2}\tilde\Lambda_{r}\right)= \beta_{1;2} P^{-1}  \left[\nabla_{r}\left( D^{\widetilde{\mathbf L}}_{\tilde b}\circ P\right)(\tilde \nu_{(r)})+D^{\widetilde{\mathbf L}}_{\tilde b}\circ P \left(\nabla_{r} \tilde \nu_{(r)}\right)\right].
   \end{align}
   Using Theorem \ref{coordinate_decay-2}, we have
\begin{equation}\label{eqn_diff_para}
\left\|\beta_{1;2}\nabla_{r}\left( D^{\widetilde{\mathbf L}}_{\tilde b}\circ P \right)(\tilde \nu_{(r)})\right\|_{k,2,\alpha}\leq  Ce^{-(\fc-5\alpha)\frac{r}{4}}.
\end{equation}
Restricting in $\Sigma_{1}(r+1)$ we have
$$
\nabla_{r}\tilde \nu_{(r)}=\phi_{r}(\nabla_{r}\nu^{*}_{(r)})-2\nabla_{s_{2}}\nu^{*}_{(r),2},
$$
where $\nu^{*}_{(r)}=\left(\nu^{*}_{(r),1}, \nu^{*}_{(r),2}\right)$. By Lemma \ref{lem_xi^*_app_1} we have
\begin{align*}
&\left\| \nabla_{s_{2}}(P\nu^*_{(r),2})|_{\frac{r}{2}\leq |s_{1}|\leq r+1}\right\|_{k-1,2,\alpha}
\leq     Ce^{-(\fc-\alpha)\frac{r}{4}}\left(\left\|P \nu^*_{(r),2}\right\|_{k,2,\alpha}+1\right).
\end{align*}
Applying the exponential decay of $u_{(r)}$ and $v_{(r)},$  we get
\begin{align}\label{eqn_diff_nu}
&\left\| \nabla_{s_{2}}(\nu^*_{(r),2})|_{\frac{r}{2}\leq |s_{1}|\leq r+1}\right\|_{k-1,2,\alpha}
\leq     Ce^{-(\fc-\alpha)\frac{r}{4}}\left(\left\| \nu^*_{(r),2}\right\|_{k,2,\alpha}+1\right).
\end{align}

Then Lemma follows from    \eqref{eqn_diff_D}, \eqref{eqn_diff_Lam}, \eqref{eqn_diff_para} and \eqref{eqn_diff_nu}.
\end{proof}

\v\n{\bf Proof of \eqref{eqn_est_L_1st}.}
By the definition of $\nu_{(r)}^{*}$ we have
\begin{align}
\label{partial_r_kernel_estimate}
\frac{\partial}{\partial r}  \nu^*_{(r)} = \frac{\partial}{\partial r} I^*( \zeta) + \frac{\partial}{\partial r}\left (Q^* P_{r}\right) H_{r} fI_{(r)}(\zeta)  + Q^* P_{r}\frac{\partial}{\partial r}  (H_{r} fI_{(r)}(\zeta)).
\end{align}
Then multiplying $H_{r} D\phi_{r}$ on both sides of \eqref{partial_r_kernel_estimate} we get
\begin{align*}
H_{r} D\phi_{r}\frac{\partial \nu^*_{(r)}}{\partial r}  = H_{r} D\phi_{r}\frac{\partial I^*_{(r)}}{\partial r} ( \zeta) +  H_{r} D\phi_{r}\frac{\partial}{\partial r}\left (Q^* P_{r}\right) H_{r} f_{(r)}I_{(r)}(\zeta)  + H_{r} P_{r}\frac{\partial}{\partial r}  (H_{r} f_{(r)}I_{(r)}(\zeta)).
\end{align*}
It follows together with \eqref{local_calculation_4} and  Theorem \ref{coordinate_decay-2} that
\begin{align}\label{operator_partial_right_inverse_kernel}
\left\|H_{r}P_{r}\frac{\partial(H_{r}fI_{(r)}(\zeta))}{\partial r}\right\|_{k-2,2,\alpha}
\leq & C\left[\mathsf {d}\left\|\frac{\partial\nu^*_{(r)}}{\partial r} \right\|_{k-1,2,\alpha} +   e^{-(\fc-5\alpha)\tfrac{r}{4} }\right]  + (A) + (B),\end{align}
where
\begin{align*}(A)= &\left\| H_{r}D\phi_{r}\left(\frac{\partial}{\partial r}I^*(\zeta)\right)\right\|_{k-2,2,\alpha},\;\;\;
(B)=&\left\| H_{r} D\phi_{r}\left( \frac{\partial}{\partial r}(Q^*P_{r})\circ H_{r}f(I_{(r)}(\zeta))\right)\right\|_{k-2,2,\alpha}.
\end{align*}
 For any $(h_{1},h_{2})$ with $supp \;h_{i}\subset \Sigma(R_{0})\bigcup\{|s_{i}|\leq \frac{3r}{2}\}$ we have
\begin{equation}\label{eqn_def_Dp}
\beta_{1;2}D \phi_{r}(h_{1},h_{2})=\beta_{1;2} \bar{\p}_{j_{o}}(h_{1}+h_{2})+\beta_{1;2} E(h_{1}+h_{2}).
\end{equation}
Then
\begin{equation}\label{eqn_def_Dp-1}
\|H_{r}D \phi_{r}(h_{1},h_{2})\|_{k-1,2,\alpha}
\leq C\|(h_{1},h_{2})\|_{k,2,\alpha}.
\end{equation}
Taking  the derivation $\frac{\p}{\p r}$ of \eqref{eqn_def_Dp} we obtain
\begin{align}\label{eqn_def_Dp-2}
\left\| \frac{\p}{\p r}(\beta_{1;2}D \phi_{r})(h_{1},h_{2}) \right\|_{k-2,2,\alpha} \leq C\left[\left\| h_{2}|_{\frac{r}{2}\leq s_{1}\leq r+1}\right\|+e^{-(\fc-5\alpha)\tfrac{r}{4} }+\|D \phi_{r}(h_{1},h_{2})|_{r-1\leq s_{1}\leq r+1}\|\right].
\end{align}
Since $H_{r} D\phi_{r}I^*_{(r)}( \zeta)=0,$ we have
$H_{r} D\phi_{r}\frac{\partial I^*_{(r)}}{\partial r} ( \zeta)= \frac{\partial H_{r} D\phi_{r}}{\partial r} I^*_{(r)} ( \zeta).$ Then
\begin{equation}\label{A}
(A)\leq C  e^{-(\mathfrak{c} -5\alpha)\frac{r}{4} } .
\end{equation}
Since
$$ \frac{\partial}{\partial r}\left (Q^* P_{r}\right)=\frac{\partial}{\partial r}\left ((Q')^* P_{r}\right)\circ (H_{r} (DQ')^{-1} P_{r} )+(Q')^* P_{r} \circ \frac{\partial}{\partial r} (H_{r} (DQ')^{-1} P_{r} )$$
by (1), (2), (3), (4) of Lemma \ref{lem_Q'_est} we get
\begin{align}\label{eqn_est_r_Q}
\left\|\frac{\p}{\p r} (Q^{*}P_r)  (\eta_{1},\eta_{2} )\right\|_{k-1,2,\alpha} \leq \mathsf C\left(e^{-(\fc -\alpha)\frac{r}{4}}\sum\|\eta_{i}|_{|s_{i}|\leq r+1}\|_{k-1,2,\alpha}+ \sum_{i=1}^{2} \left\|\eta_{i}|_{ \frac{r}{4} \leq |s_{i}|\leq  r+1}\right\|_{k-1,2,\alpha}\right).
\end{align}
It follows from  Lemma \ref{lem_est_f_r}, \eqref{eqn_def_Dp-1} and \eqref{eqn_est_r_Q} that
\begin{equation}\label{B}
(B) \leq C  e^{-(\mathfrak{c} -5\alpha)\frac{r}{4} }.
\end{equation}
Note that $H_{r}P_{r}\frac{\partial}{\partial r}(H_{r}fI_{(r)}(\zeta))+\frac{\p}{\p r}(H_{r}P_r)H_{r}fI_{(r)}(\zeta)=\frac{\partial}{\partial r}(H_{r}fI_{(r)}(\zeta))$.
Then \eqref{A}, \eqref{B}, (5) of Lemma \ref{lem_Q'_est} together with \eqref{operator_partial_right_inverse_kernel} gives
\begin{equation}\label{estimate_operator_partial_I}
 \left\| \frac{\partial}{\partial r}(H_{r}fI_{(r)}(\zeta))\right\|_{k-1,2,\alpha}\leq C  e^{-(\mathfrak{c} -5\alpha)\frac{r}{4} }  +C\mathsf{d}\left\| \frac{\partial}{\partial r}\left(\nu^*_{(r)}\right)\right\|_{k-1,2,\alpha} .
\end{equation}
Substituting  this into \eqref{partial_r_kernel_estimate},  and using \eqref{eqn_est_r_Q}, Lemma \ref{lem_est_rI-1}, Lemma \ref{lem_est_f_r} we conclude that
\begin{equation}\label{exp_ker_coord_est}
\left\|\frac{\partial}{\partial r}\nu^*_{(r)}\right\|_{k-1,2,\alpha } \leq Ce^{-(\mathfrak{c} -5\alpha)\frac{r}{4} }  \end{equation}
when  $\mathsf{d}$   small. Since  $v_{(\mathbf r)}$  is a $(j_{a},J)$  holomorphic map, by the standard ellptic estimates we have
 \eqref{eqn_est_L_1st}.
 \v
 Repeating the all arguments in this section,  one can prove that there exists a constant $\mathsf C>0 $ such that
\begin{equation}
\left\|\frac{\partial }{\partial \tau}\nu^*_{(r)}
 \right\|_{k-1,2,\alpha}\leq \mathsf C e^{-(\fc-5\alpha)\tfrac{r}{4} }(\mathsf{d}+1)
\end{equation}
 for any $\zeta\in \ker D^{\widetilde{\mathbf L}}_{b_{o}}$.

 \v

\subsection{\bf Estimates of the second derivatives}

We can define   $H_{\mathbf r}$ and $P_{\mathbf r},\cdots$  as   before. Let $\xi=Q^{\widetilde{\mathbf L}}_{b_{o}}\LH \LP\eta$ and $\eta_{l}^{i}=\eta|_{D^{i}_{l}(R_{0})},l=1,2.$ Obviously
\begin{equation}
H_{\mathbf r}P_{\mathbf r}(\eta)|_{D^{i}(R_{0})}=\left(\beta_{1,i;2}(\sum_{l=1}^{2}\beta_{l,i;2}\eta_{l}^{i} ),\beta_{2,i;2}(\sum_{l=1}^{2}\beta_{l,i;2}\eta_{l}^{i} )\right ) .\end{equation}
Set $W_{l}^{i}=\{(s_{l}^{i},t_{l}^{i})|\frac{r_{i}}{4}\leq |s_{l}^{i}|\leq r_{i}+1\}.$ It is easy to see that for any $1\leq i\neq j\leq \mathfrak{e},$   and $l=1,2,$
\begin{align}\label{eqn_r_HP}
 \frac{\p(H_{\mathbf r}P_{\mathbf r})}{\p r_{i} }(\eta)|_{V_{j}}=0,\;\;\;\;\;\frac{\p^2(H_{\mathbf r}P_{\mathbf r})}{\p r_{i} \p r_{j}}(\eta)=0,\;\;\;\;\;\frac{\p^2E}{\p r_{i} \p r_{j}}=0,\;\;\;\;\;\frac{\p^2\beta_{l,\ell;R}}{\p r_{i} \p r_{j}}=0,\\
 \label{eqn_r_beta} supp \frac{\p E}{\p r_{i}}\subset V_{i},\;\;\;\;
 \frac{\p\beta_{l,j,r_{j}}}{\p r_{i}}=\frac{\p\beta_{l,j,2}}{\p r_{i}}=0,\;\;\;supp \frac{\p(H_{\mathbf r}P_{\mathbf r})}{\p r_{i}}\subset V_{i},\;\;\;\;supp \frac{\p\beta_{l,i,r_{i}}}{\p r_{i}}\subset V_{i}.
\end{align}
It follows that
$$
\left\|\frac{\p}{\p r_{i}}\xi\right\|_{k-1,2,\alpha}\leq C\left\|\frac{\p}{\p r_{i}} H_{\mathbf r}P_{\mathbf r}(\eta)\right\|_{k-2,2,\alpha}\leq C\|\eta|_{V_{i}}\|_{k-1,2,\alpha}.
$$
Let $\xi_{l}^{i}=\xi|_{D^{i}_{l}(R_{0})},l=1,2.$ Then $(\xi_{1}^{i},\xi_{2}^{i})$ is the restriction of $\xi$ near the node $q_{i}$.
Since $D_{b_o}^{\widetilde{\mathbf L}}\frac{\p}{\p r_{i}}\xi=\frac{\p}{\p r_{i}}( H_{\mathbf r}P_{\mathbf r}(\eta)) ,$ by Lemma \ref{tube_ker}  and \eqref{eqn_r_beta}  we have for any $j\neq i$
\begin{equation} \label{eqn_est_xi_l}
\sum_{l=1}^2\left\|\frac{\p}{\p r_{i}}\xi|_{W^{j}_{l}}\right\|_{k-1,2,\alpha}\leq C e^{-(\fc-\alpha)\frac{r_{j}}{4}} \left\|\frac{\p}{\p r_{i}} H_{\mathbf r}P_{\mathbf r}(\eta)\right\|_{k-2,2,\alpha}\leq C e^{-(\fc-\alpha)\frac{r_{j}}{4}}\|\eta|_{V_{i}}\|_{k-1,2,\alpha}
\end{equation}

\n
In the following we assume that  $1\leq i\neq j\leq \mathfrak{e}.$   It is easy to see that
\begin{equation}\label{eqn_2nd_xi}
\frac{\p^2 \xi}{\p r_{i} \p r_{j}}|_{D^{\ell}}=0,\;\;\;\;(Q^{\prime})^{*}P_{\mathbf r}(\eta)|_{D^{\ell}}=(\beta_{1,{\ell},r_{\ell}}\xi_{1}^{\ell},\beta_{2,{\ell},r_{\ell}}\xi_{2}^{\ell}),\;\;\;\;\;\;\;\;\forall\; 1\leq \ell \leq \mathfrak{e}.
\end{equation}
 Taking the derivative $\frac{\p}{\p r_{i}}$ and $\frac{\p^2}{\p r_{i}\p r_{j}}$ of $(Q^{\prime})^{*}P_{\mathbf r},$ by \eqref{eqn_r_HP}, \eqref{eqn_r_beta}, \eqref{eqn_2nd_xi} and $(Q^{\prime})^{*} P_{\mathbf r}|_{\Sigma(R_{0})}=\xi,$ we obtain
\begin{align*}
\left.\frac{\p}{\p r_{j}}((Q^{\prime})^{*} P_{\mathbf r}) (\eta)\right|_{D^{i}}&=\left. \left( \beta_{1,{i},r_{i}}\frac{\p \xi_{1}^{i}}{\p r_{j}}, \beta_{2,{i},r_{i}}\frac{\p \xi_{2}^{i}}{\p r_{j}}\right)\right|_{D^{i}},\\
\left.\frac{\p^2}{\p r_{i}\p r_{j}}((Q^{\prime})^{*} P_{\mathbf r}) (\eta)\right|_{D^{\ell}}&=\left.\delta_{\ell,i}\left(\frac{\p\beta_{1,{i},r_{i}}}{\p r_{i}}\frac{\p \xi_{1}^{i}}{\p r_{j}},\frac{\p\beta_{2,{i},r_{i}}}{\p r_{i}}\frac{\p \xi_{2}^{i}}{\p r_{j}}\right)\right|_{D^{\ell}}+\left.\delta_{\ell,j}\left(\frac{\p\beta_{1,{j},r_{j}}}{\p r_{j}}\frac{\p \xi_{1}^{j}}{\p r_{i}},\frac{\p\beta_{2,{j},r_{j}}}{\p r_{j}}\frac{\p \xi_{2}^{j}}{\p r_{i}}\right)\right|_{D^{\ell}},
\end{align*}
and $supp \frac{\p^2}{\p r_{i}\p r_{j}}((Q^{\prime})^{*} P_{\mathbf r}) (\eta)\subset V_{i}\cup V_{j}.$
 By \eqref{eqn_est_xi_l}  we get
 \begin{align} \label{sec_app_right}
&\sum_{l=1}^{2}\left\|\left.\frac{\p}{\p r_{j}} ((Q^{\prime})^{*}P_{\mathbf r})(\eta)\right|_{W_{l}^{i}}\right\|_{k-1,2,\alpha} + \left\|\frac{\p^2}{\p r_{i}\p r_{j}}((Q^{\prime})^{*} P_{\mathbf r}) (\eta)\right\|_{k-2,2,\alpha} \\
\leq &Ce^{-\frac{(\fc-\alpha)r_{j}}{4}} \left\|\eta|_{V_{i}}\right\|_{k-1,2,\alpha}
+Ce^{-\frac{(\fc-\alpha)r_{i}}{4}} \left\|\eta|_{V_{j}}\right\|_{k-1,2,\alpha}.\nonumber
\end{align}
A direct calculation gives us
$$
\LH \circ DQ'\circ \LP|_{D^{i}}=\left(\beta_{1,{i},2} \left(\bar{\p}_{j_{o}}(\sum_{\ell=1}^{2}\beta_{\ell,{i},r_{i}}\xi_{\ell}^{i})+E\sum_{\ell=1}^{2}\beta_{\ell,{i},r_{i}}\xi_{\ell}^{i}\right),\beta_{2,{i},2} \left(\bar{\p}_{j_{o}}(\sum_{\ell=1}^{2}\beta_{\ell,{i},r_{i}}\xi_{\ell}^{i})+E\sum_{\ell=1}^{2}\beta_{\ell,{i},r_{i}}\xi_{\ell}^{i}\right)\right).
$$
It follows from  $ H_{\mathbf r}  DQ' P_{\mathbf r}|_{\Sigma\setminus  \cup_{i}D^{i}(r_{i}/2)}=Id,\;H_{\mathbf r}  DQ' P_{\mathbf r}|_{ \cup_{i}D^{i}(3r_{i}/2)}=0$, \eqref{eqn_r_beta}
  and $\frac{\p E}{\p r_{j}}|_{V_{i}}=0$ that
\begin{equation}
supp\frac{\p}{\p r_{i}}\left(H_{\mathbf r}  DQ' P_{\mathbf r}\right)\subset  \bigcup_{j=1}^{\mathfrak{e}}V_{j},\;\;\;\; supp \frac{\p^2}{\p r_{i}\p r_{j}} \left(H_{\mathbf r}  DQ' P_{\mathbf r}\right)\subset  V_{j}\cup V_{i}.
\end{equation}
Taking the derivative $\frac{\p}{\p r_{i}}$ and $\frac{\p^2}{\p r_{i}\p r_{j}}$ of  $\LH(DQ')\LP,$ using \eqref{eqn_r_HP}, \eqref{eqn_r_beta}, \eqref{eqn_est_xi_l} and \eqref{eqn_2nd_xi}
one can easily check that
\begin{align} \label{sec_dq}
&\sum_{l=1}^{2}\left\|\left.\frac{\p}{\p r_{j}}\left(H_{\mathbf r}  DQ' P_{\mathbf r}\right)(\eta)\right|_{W_{l}^{i}}\right\|_{k-1,2,\alpha} + \left\|\frac{\p^2}{\p r_{i}\p r_{j}}\left(H_{\mathbf r}   DQ'   P_{\mathbf r}\right) (\eta)\right\|_{k-2,2,\alpha} \\
\leq &Ce^{-\frac{(\fc-\alpha)r_{j}}{4}} \left\|\eta|_{V_{i}}\right\|_{k-1,2,\alpha}
+Ce^{-\frac{(\fc-\alpha)r_{i}}{4}} \left\|\eta|_{V_{j}}\right\|_{k-1,2,\alpha}.\nonumber
\end{align}
Note that
$$
\frac{\p}{\p r_{i}}\left(\LH(DQ')P_{\mathbf r}\right)\circ \LH(DQ')^{-1}P_{\mathbf r}+\LH(DQ')P_{\mathbf r}\circ \frac{\p}{\p r_{i}}\left(\LH(DQ')^{-1}P_{\mathbf r}\right)=\frac{\p \LH \LP}{\p r_{i}}.
$$
Multiplying $\LH(DQ')^{-1}P_{\mathbf r}$ on the both sides, by
$$
\frac{\p}{\p r_{i}}\left(\LH(DQ')^{-1}P_{\mathbf r}\right)= \LH \LP\frac{\p}{\p r_{i}}\left(\LH(DQ')^{-1}P_{\mathbf r}\right)+\frac{\p \LH \LP}{\p r_{i}}\LH(DQ')^{-1}P_{\mathbf r},
$$
we have
\begin{align}\label{eqn_r_DQ-1}
\frac{\p}{\p r_{i}}\left(\LH(DQ')^{-1}P_{\mathbf r}\right)=&\LH(DQ')^{-1}P_{\mathbf r}\frac{\p \LH \LP}{\p r_{i}}+\frac{\p \LH \LP}{\p r_{i}}\LH(DQ')^{-1}P_{\mathbf r}\nonumber\\
 &-\LH(DQ')^{-1}P_{\mathbf r}\circ \frac{\p}{\p r_{i}}\left(\LH(DQ')P_{\mathbf r}\right)\circ \LH(DQ')^{-1}P_{\mathbf r}.
\end{align}
Using (4), (5) of Lemma \ref{lem_Q'_est}, \eqref{eqn_r_HP} to the first term,  and  \eqref{eqn_r_HP} to the second term, applying  (4) of  Lemma \ref{lem_Q'_est}  and \eqref{sec_dq} to the last term   we have
\begin{equation}\label{eqn_r_DQ-1_j}
\sum_{l=1}^{2}\left\|\left.\frac{\p}{\p r_{i}}\left(\LH(DQ')^{-1}P_{\mathbf r}\right)\right|_{W_{l}^{j}}\right\|_{k-1,2,\alpha}\leq Ce^{-\frac{(\fc-\alpha)r_{i}}{4}} \left\|\eta|_{V_{j}}\right\|_{k-1,2,\alpha}.
\end{equation}
Taking derivative $\frac{\p}{\p r_{j}}$ of \eqref{eqn_r_DQ-1}, by \eqref{eqn_r_HP} we get
\begin{align*}
\frac{\p^2}{\p r_{i}\p r_{j}}\left(\LH(DQ')^{-1}P_{\mathbf r}\right)=&
\frac{\p}{\p r_{j}}\left(\LH(DQ')P_{\mathbf r}\right)\frac{\p \LH \LP}{\p r_{i}}+\frac{\p \LH \LP}{\p r_{i}}\frac{\p}{\p r_{j}}\left(\LH(DQ')P_{\mathbf r}\right) \\
&-\frac{\p}{\p r_{j}}\left(\LH(DQ')^{-1}P_{\mathbf r}\right)\circ \frac{\p}{\p r_{i}}\left(\LH(DQ')P_{\mathbf r}\right)\circ \LH(DQ')^{-1}P_{\mathbf r}\\
&-\LH(DQ')^{-1}P_{\mathbf r}\circ \frac{\p}{\p r_{i}}\left(\LH(DQ')P_{\mathbf r}\right)\circ \frac{\p}{\p r_{j}}\left(\LH(DQ')^{-1}P_{\mathbf r}\right)\\
&-\LH(DQ')^{-1}P_{\mathbf r}\circ \frac{\p^2}{\p r_{i}\p r_{j}}\left(\LH(DQ')P_{\mathbf r}\right)\circ \LH(DQ')^{-1}P_{\mathbf r}.
\end{align*}
  By (3), (4) of Lemma   \ref{lem_Q'_est},  \eqref{eqn_r_HP}, \eqref{eqn_r_DQ-1_j} and \eqref{sec_dq}  one can check that
\begin{align} \label{sec_dq-1}
&  \left\|\frac{\p^2}{\p r_{i}\p r_{j}}\left(H_{\mathbf r}  (DQ') ^{-1} P_{\mathbf r}\right) (\eta)\right\|_{k-2,2,\alpha} \\
\leq &Ce^{-\frac{(\fc-\alpha)r_{j}}{4}} \left\|\eta|_{V_{i}}\right\|_{k-1,2,\alpha}
+Ce^{-\frac{(\fc-\alpha)r_{i}}{4}} \left\|\eta|_{V_{j}}\right\|_{k-1,2,\alpha}.\nonumber
\end{align}
By  \eqref{sec_app_right}, \eqref{sec_dq-1} and
\begin{align*}
 \frac{\p^{2}Q^{*} \LP}{\p r_{i}\p r_{j}}=& \frac{\p^{2}(Q')^{*} \LP}{\p r_{i}\p r_{j}}\circ H_{\mathbf r}  (DQ') ^{-1} P_{\mathbf r}+(Q')^{*} \LP\frac{\p^2}{\p r_{i}\p r_{j}}\left(H_{\mathbf r}  (DQ') ^{-1} P_{\mathbf r}\right) \\
 &+\frac{\p(Q')^{*} \LP}{ \p r_{j}}\circ \frac{\p}{\p r_{i}}\left(H_{\mathbf r}  (DQ') ^{-1} P_{\mathbf r}\right)+\frac{\p(Q')^{*} \LP}{\p r_{i}}\circ \frac{\p}{\p r_{j}}\left(H_{\mathbf r}  (DQ') ^{-1} P_{\mathbf r}\right),
 \end{align*}
we have
\begin{align} \label{eqn_q_2nd}
&\sum_{l=1}^{2}\left\|\left.\frac{\p}{\p r_{j}}\left( Q^* P_{\mathbf r}\right)(\eta)\right|_{W_{l}^{i}}\right\|_{k-1,2,\alpha} + \left\|\frac{\p^2}{\p r_{i}\p r_{j}}\left(Q^*  P_{\mathbf r}\right) (\eta)\right\|_{k-2,2,\alpha} \\
\leq &Ce^{-\frac{(\fc-\alpha)r_{j}}{4}} \left\|\eta|_{V_{i}}\right\|_{k-1,2,\alpha}
+Ce^{-\frac{(\fc-\alpha)r_{i}}{4}} \left\|\eta|_{V_{j}}\right\|_{k-1,2,\alpha}.\nonumber
\end{align}
Since for any $\zeta+\hat \zeta_{0}\in Ker D^{\widetilde {\mathbf L}}_{b_{o}}$
$$
D\phi_{\mathbf r}(\zeta_{(\mathbf r)}^{*}=\sum (\bar{\partial}\beta_{i;r}) \zeta_{i}+\sum  \beta_{i;r}(E-E^{\widetilde{\mathbf L}}_{u_{i}})(\zeta_{i}+\hat \zeta_{0}),
$$
  we have
$$
supp\;\LH D\phi_{\mathbf r}(\zeta_{(\mathbf r)}^{*})\subset \cup_{i} V_{i},\;\;\;supp\;\frac{\p}{\p r_{i}}\left(\LH D\phi_{\mathbf r}(\zeta_{(\mathbf r)}^{*})\right)\subset  V_{i},\;\;\;\;\frac{\p^2}{\p r_{i}\p r_{j}}\left(\LH D\phi_{\mathbf r}(\zeta_{(\mathbf r)}^{*})\right)=0.
$$
Since  $I_{r}^{*}(\zeta+\hat \zeta_{0})=(Id-Q^{*}\LP\circ \LH D\circ \phi_{\mathbf r} ) (\zeta_{(\mathbf r)}^{*})$,  \eqref{eqn_def_Dp-2} and \eqref{eqn_q_2nd}, we have
\begin{align}\label{eqn_I*_2}
& \sum_{l=1}^{2}\left\|\left.\frac{\p}{\p r_{j}}I^*_{(\mathbf r)}(\zeta+\hat \zeta_{0})\right|_{W_{l}^{i}}
\right\|_{k-1,2,\alpha} +\left\|\frac{\p^2}{\p r_{i}\p r_{j}}I^*_{\mathbf r}(\zeta+\hat \zeta_{0})
\right\|_{k-2,2,\alpha}
\\ \nonumber
\leq &\mathsf C e^{(\fc-\alpha)\frac{r_{i}+r_{j}}{2}} \|\zeta+\hat \zeta_0\|_{\mathcal W,k,2,\alpha}.
\end{align}
Note that, restricting in $V_{i},i\neq j$
$$
\tilde{\nabla}_{\frac{\p}{\p r_{j}}}\tilde \nu_{(\mathbf r)}=\phi_{\mathbf r}\tilde{\nabla}_{\frac{\p}{\p r_{j}}}  \nu^{*}_{(\mathbf r)},\;\;\;\;\;\tilde{\nabla}_{\frac{\p}{\p r_{j}}}\tilde h_{(\mathbf r)}=\phi_{\mathbf r}\tilde{\nabla}_{\frac{\p}{\p r_{j}}}  h^{*}_{(\mathbf r)},\;\;\;\;\;\frac{\p E}{\p r_{j}}=0.
$$
Similar \eqref{local_calculation_4}, by   Theorem \ref{coordinate_decay-2} we can prove that
\begin{align}\label{eqn_diff_rest}
\sum_{l=1}^{2}\left\|\left.H_{\mathbf r}\circ D  \phi_{\mathbf r} \circ \frac{\partial}{\partial r_{j}}   \nu^*_{(\mathbf r)} \right|_{W_{l}^{i}}\right\|_{k-1,2,\alpha}
\leq \mathsf{C}  \mathsf {d}\sum_{l=1}^{2}\left\|\left.\frac{\partial}{\partial r_{j}}  \nu^*_{(\mathbf r)} \right|_{W_{l}^{i}}\right\|_{k-1,2,\alpha}+ \mathsf{C}  e^{-(\fc-5\alpha)\frac{r_{i}+r_{j}}{4}}.
\end{align}
Using \eqref{sec_app_right}, \eqref{sec_dq}, \eqref{eqn_diff_rest} and   the same  argument as in \cite{LS-1}, we have
\begin{equation}\label{eqn_est_sol_rest}
\sum_{l=1}^{2}\left\|\frac{\partial }{\partial r_{i}}\left.\nu^*_{(\mathbf r)}\right|_{W_{l}^{j}} \right\|_{k-1,2,\alpha}+\sum_{l=1}^{2}\left\|\left.  \frac{\p(H_{\mathbf r} fI_{\mathbf r}(\zeta))}{\p r_{i}}\right|_{W_{l}^{j}}\right\|_{k-2,2,\alpha}\leq Ce^{-(\fc-5\alpha)\frac{r_{i}+r_{j}}{4}}.
\end{equation}
By \eqref{eqn_est_sol_rest}, Theorem \ref{coordinate_decay-2}, the Cauchy-Schwarz inequality and the same argument of \eqref{local_calculation_4}, we have
\begin{align}\label{sec_est_diff}
&\left\|H_{\mathbf r} D\phi_{\mathbf r}\circ  \frac{\p^2}{\p r_{i}\p r_{j}} \nu^*_{(\mathbf r)}\right\|_{k-2,2,\alpha}
\leq  \mathsf{C} \left[ \mathsf {d}\left\|\frac{\p^2}{\p r_{i}\p r_{j}}\nu^*_{(\mathbf r)}\right\|_{k-2,2,\alpha}+ e^{-(\fc-5\alpha)\frac{r_{i}+r_{j}}{4}}\right].
\end{align}
Taking the derivative $\frac{\p^2}{\p r_{i}\p r_{j}}$ of  $\nu^*_{(\mathbf r)}$ and multiplying $H_{\mathbf r} D\phi_{\mathbf r}$ on both sides  we get
\begin{align*}
&H_{\mathbf r} D\phi_{\mathbf r}\circ\frac{\p^2(\nu^*_{(\mathbf r)})}{\p r_{i}\p r_{j}} \\
= &H_{\mathbf r} D\phi_{\mathbf r}\circ\frac{\p^2I^*_{\mathbf r}(\zeta) }{\partial r_{i}\p r_{j}} + H_{\mathbf r} D\phi_{\mathbf r}\circ\frac{\p^2(Q^* P_{\mathbf r} )}{\p r_{i}\p r_{j}} \circ H_{\mathbf r} fI_{\mathbf r}(\zeta)  + H_{\mathbf r} P_{\mathbf r} \frac{\p^2(H_{\mathbf r} fI_{\mathbf r}(\zeta))}{\p r_{i}\p r_{j}} \\
&+ H_{\mathbf r} D\phi_{\mathbf r}\circ\frac{\p (Q^* P_{\mathbf r})}{\p r_{i}}\frac{\p(H_{\mathbf r} fI_{\mathbf r}(\zeta))}{\p r_{j}}+H_{\mathbf r} D\phi_{\mathbf r}\circ \frac{\p (Q^* P_{\mathbf r})}{\p r_{j}}\frac{\p(H_{\mathbf r} fI_{\mathbf r}(\zeta))}{\p r_{i}}  .
\end{align*}
By  $\frac{\p(H_{\mathbf r}P_{\mathbf r})}{\p r_{j}}\subset V_{j}$,  \eqref{estimate_operator_partial_I},  \eqref{sec_app_right}, \eqref{sec_dq}, \eqref{eqn_est_sol_rest} and  $$\frac{\partial}{\partial r_{j}}(H_{\mathbf r}f(I_{\mathbf r}(\zeta)))=\frac{\p(H_{\mathbf r}P_{\mathbf r})}{\p r_{j}}\circ H_{\mathbf r}f(I_{\mathbf r}(\zeta))+H_{\mathbf r}P_{\mathbf r}\frac{\partial}{\partial r_{j}}(H_{\mathbf r}f(I_{\mathbf r}(\zeta))),$$ using Lemma \ref{lem_Q'_est} and Theorem \ref{coordinate_decay-2}, we get
\begin{equation}\label{eqn_Qf_1}
\left\|\frac{\p (Q^* P_{\mathbf r})}{\p r_{i}}\frac{\p(H_{\mathbf r} fI_{\mathbf r}(\xi))}{\p r_{j}}\right\|_{k-2,2,\alpha}\leq C e^{-(\fc-5\alpha)\frac{r_{i}+r_{j}}{4}}  .
\end{equation}
Then using Theorem \ref{coordinate_decay-2} and  repeating the proof of  \eqref{estimate_operator_partial_I} we have
\begin{align}\label{eqn_Hf_2}
&\left\|   \frac{\p^2(H_{\mathbf r} fI_{\mathbf r}(\zeta))}{\p r_{i}\p r_{j}}\right\|_{k-2,2,\alpha}\leq C  e^{-(\mathfrak{c} -5\alpha)\frac{r_{i}+r_{j}}{4} }  +C\mathsf{d}\left\| \frac{\partial^2}{\p r_{i}\p r_{j}}\nu^*_{(\mathbf r)}\right\|_{k-2,2,\alpha}
\end{align}
By the definition of $\nu_{(r)}^{*}$ we have
\begin{align*}
 \frac{\p^2\nu^*_{(\mathbf r)}}{\p r_{i}\p r_{j}}
= &\frac{\p^2I^*_{\mathbf r}(\zeta) }{\partial r_{i}\p r_{j}} + \frac{\p^2(Q^* P_{\mathbf r} )}{\p r_{i}\p r_{j}} \circ H_{\mathbf r} fI_{\mathbf r}(\zeta)  + Q^* P_{\mathbf r} \circ \frac{\p^2(H_{\mathbf r} fI_{\mathbf r}(\zeta))}{\p r_{i}\p r_{j}} \\
&+ \frac{\p (Q^* P_{\mathbf r})}{\p r_{i}}\frac{\p(H_{\mathbf r} fI_{\mathbf r}(\zeta))}{\p r_{j}}+  \frac{\p (Q^* P_{\mathbf r})}{\p r_{j}}\frac{\p(H_{\mathbf r} fI_{\mathbf r}(\zeta))}{\p r_{i}}  .
\end{align*}
Applying \eqref{eqn_I*_2} to the first term, \eqref{eqn_q_2nd} to the second term, \eqref{eqn_Hf_2} to the third term, and   \eqref{eqn_Qf_1} to the last two term we can obtain the estimate of \eqref{eqn_est_L_2nd}.
\v\v

\section{Appendix}

\subsection{Implicit function theorem}\label{Implicit function theorem}

 We can generalize Theorem A.3.3 and Proposition A.3.4 in \cite{MS} to the case with parameters by the same method.
\begin{theorem}\label{details_implicit_function_theorem}
Let  $(A,\|\cdot\|_{A})$,  $(X,\|\cdot\|_{X})$ and $(Y,\|\cdot\|_{Y})$ be Banach spaces, $U\subset X$  be open sets and  $V\subset A$, $U\subset X$  be open sets and $F: V\times U\rightarrow Y$ be a continuously differentiable map.   For any $(a,x)\in V\times U$ define
$$
D_{a}F(a,x)(g)=\frac{d}{dt}F(a+tg ,x)|_{t=0},\;\;D_{x}F(a,x)(h)=\frac{d}{dt}F(a ,x+th)|_{t=0},\;\;\;\forall\;g\in A,\; h\in X.
$$
Suppose that   $D_{x}F(a_{o},x_{o})$  is surjective and has a bounded linear right inverse $Q_{(a_{o},x_{o})}:Y\longrightarrow X$ with $\|Q_{(a_{o},x_{o})}\|\leq \mathsf C$ for some constant $\mathsf C>0$.  Choose a positive constant $\delta >0$  such that
\begin{equation}\label{c_of_differential}
\|D_{x}F(a,x)- D_{x}F(a_{o},x_{o})\|\leq \frac{1}{2\mathsf C},\;\;\;\;\forall \;x\in B_{\delta}(x_{o},X),\;a\in B_{\delta}(a_{o},A).
\end{equation}
where $B_{\delta}(a_{o},A)=\{a\in A|\; \|a-a_{o}\|_{A} \leq \delta\},B_{\delta}(x_{o},X)=\{x\in X|\; \|x-x_{o}\|_{X} \leq \delta\}.$ Suppose that $x_1\in X$ and $a\in  B_{\delta}(a_{o},A)$ satisfies
\begin{equation}\label{small_value_of_F}
\|F(a,x_1)\|_{Y}<\frac{\delta}{4\mathsf C},\;\;\;\; \|x_{1}-x_{o}\|_{X}\leq \frac{\delta}{8}.
\end{equation}
Then there exists a unique $x\in X$ such that
\begin{equation}
F(a,x)=0,\;\;\;\;x-x_1\in \im\; Q,\;\;\;\;\|x-x_{o}\|_{X}\leq \delta,\;\;\;\; \|x-x_{1}\|_{X}\leq 2\mathsf C\|F(a,x_{1})\|_{Y}.
\end{equation}
Moreover, if $\|F(a_{o},x_{o})\|_{Y}\leq \frac{\delta}{4\mathsf C},$ there exist a constant $\delta'>0$ and a unique family differential map  $f_{a}:\ker D_{x}F(a_{o},x_{o})\rightarrow Y$ such that for any $(a,x)\in F^{-1}(0)\cap (B_{\delta'}(a_{o},A)\times B_{\delta'}(x_{o},X)),$ we have
 \begin{equation}\label{eqn_implicit_F}
 F(a,x)=0\Longleftrightarrow
 x=x_{o}+\zeta + Q_{(a_{o},x_{o})}\circ f_{a}(\zeta),\;\;\; \zeta \in \ker\;  D_{x}F(a_{o},x_{o})
 \end{equation}
\end{theorem}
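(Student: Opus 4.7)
The plan is to reduce the problem to a Banach fixed point statement in the target space $Y$. To enforce the constraint $x-x_{1}\in \im\, Q_{(a_{o},x_{o})}$ automatically, I would seek a solution of the form $x=x_{1}+Q_{(a_{o},x_{o})}y$ for $y\in Y$, and introduce the nonlinear remainder
\[
N(\xi):=F(a,x_{1}+\xi)-F(a,x_{1})-D_{x}F(a_{o},x_{o})\,\xi.
\]
Since $D_{x}F(a_{o},x_{o})\circ Q_{(a_{o},x_{o})}=\mathrm{Id}_{Y}$, the equation $F(a,x)=0$ is equivalent to the fixed point equation
\[
y=T(y):=-F(a,x_{1})-N(Q_{(a_{o},x_{o})}y)
\]
on $Y$. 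The point of this reformulation is that $T$ is $Y$-valued and its Lipschitz behavior is governed entirely by the differential bound \eqref{c_of_differential}.

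The core computation is then to check that $T$ is a contraction of ratio $1/2$ on the ball $B_{\rho}(0,Y)$ with $\rho=\delta/(2\mathsf{C})$. The mean value inequality applied to $N$, whose differential is $D_{x}F(a,\cdot)-D_{x}F(a_{o},x_{o})$ and hence bounded by $1/(2\mathsf{C})$ on $B_{\delta}(x_{o},X)$, yields the contraction constant; meanwhile the size assumptions $\|x_{1}-x_{o}\|_{X}\le\delta/8$ and $\|F(a,x_{1})\|_{Y}<\delta/(4\mathsf{C})$ are calibrated exactly so that $x_{1}+Qy$ stays in $B_{\delta}(x_{o},X)$ and $T$ maps $B_{\rho}$ into itself. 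The Banach fixed point theorem then produces a unique $y^{*}\in B_{\rho}$, the quantitative bound $\|y^{*}\|_{Y}\le 2\|T(0)\|_{Y}=2\|F(a,x_{1})\|_{Y}$, and the solution $x=x_{1}+Q_{(a_{o},x_{o})}y^{*}$ automatically satisfies all four conclusions of the first half of the theorem.

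For the second half, I would apply the first part with $x_{1}:=x_{o}+\zeta$ where $\zeta\in\ker D_{x}F(a_{o},x_{o})$. The hypothesis $\|F(a_{o},x_{o})\|_{Y}\le\delta/(4\mathsf{C})$ together with joint continuity of $F$ ensures that for a sufficiently small $\delta'>0$, the size conditions hold uniformly for $(a,\zeta)\in B_{\delta'}(a_{o},A)\times B_{\delta'}(0,\ker D_{x}F(a_{o},x_{o}))$. This defines $f_{a}(\zeta):=y^{*}(a,\zeta)\in Y$, and the decomposition $X=\ker D_{x}F(a_{o},x_{o})\oplus\im\,Q_{(a_{o},x_{o})}$ (a consequence of $D_{x}F\circ Q=\mathrm{Id}$) together with the uniqueness clause of the first part yields the equivalence \eqref{eqn_implicit_F}.

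The step that requires the most care is the smoothness of $f_{a}$ in $(a,\zeta)$, since the theorem records $f_{a}$ as a (jointly) differentiable map. I would obtain it by viewing the fixed point equation as the zero set of the $C^{1}$ map $G(a,\zeta,y):=y-T_{a,\zeta}(y)$ at $(a_{o},0,0)$. At the fixed point one has $D_{y}G=\mathrm{Id}_{Y}+D_{\xi}N(Qy^{*})\circ Q$, which is invertible because $D_{\xi}N(Qy^{*})\circ Q$ has operator norm at most $1/2$. The classical Banach space implicit function theorem for $G$ then gives $C^{1}$-dependence of $y^{*}$, hence of $f_{a}$, on $(a,\zeta)$; iterating (or invoking the $C^{k}$-version directly) promotes the regularity of $f_{a}$ to match that of $F$. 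The rest is routine bookkeeping between the ambient and quotient norms.
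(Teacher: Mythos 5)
Your argument is correct and is precisely the standard McDuff--Salamon contraction argument (Theorem A.3.3 and Proposition A.3.4 of \cite{MS}) adapted to carry the parameter $a$, which is exactly what the paper invokes in lieu of a written proof ("by the same method"). The only cosmetic difference is that you set up the contraction $T$ on $Y$ whereas the usual presentation works with the conjugate contraction $\xi\mapsto\xi-Q F(a,x_1+\xi)$ on $\mathrm{im}\,Q\subset X$, and your auxiliary map $G(a,\zeta,y)=y-T_{a,\zeta}(y)$ plays the role of the paper's local diffeomorphism $\phi_a(x)=x+Q(F(a,x)-D_xF(a_o,x_o)(x-x_o))$ in the smoothness step; these are the same argument transported by the isomorphism $Q:Y\to\mathrm{im}\,Q$.
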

 Using Theorem \ref{details_implicit_function_theorem} we can obtain the smoothness of implicit function.
 \begin{theorem}\label{smooth_implicit_function_theorem} Let $F$ satisfies the assumption of Theorem \ref{details_implicit_function_theorem}.
   If $F:V\times U\longrightarrow Y$ is of class $C^{\ell}$, where $\ell$ is a positive integer, then  there exists a constant $\delta'>0$ such that  $F^{-1}(0)|_{B_{\delta'}(a_{o},A)\times  B_{\delta'}(x_{o},X)}$ is $C^{\ell}$ manifold,  and $\xi\rightarrow x_{o} + \xi + Q \circ f_{a}(\xi)$ is a $C^{\ell}$-chart of  $F^{-1}(0)|_{B_{\delta'}(a_{o},A)\times  B_{\delta'}(x_{o},X)}$.
 In particular,
\begin{equation}\label{eqn_impli_est_F}
\|D_{a} \left(x_{o}+\zeta + Q_{(a_{o},x_{o})}\circ f_{a}(\zeta)\right)\|\leq C,
\end{equation}
where $C>0$ is a constant depending only on $C_{1}$, $\mathsf C,\delta'$, $\|f_a\|$ and  $\|D^2_{ax}F(a,x_{o})\|$.
 \end{theorem}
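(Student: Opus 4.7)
The plan is to recast the implicit problem as one to which the classical $C^{\ell}$ Banach-space implicit function theorem applies directly, with the partial derivative in the new variable being the identity (hence trivially an isomorphism), and then to obtain the derivative estimate by implicit differentiation.

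First I would define
$$G : V \times \ker D_{x}F(a_{o},x_{o}) \times Y \longrightarrow Y, \qquad G(a,\zeta,\eta) := F\bigl(a,\, x_{o}+\zeta + Q_{(a_{o},x_{o})}\eta\bigr).$$
Because $F$ is of class $C^{\ell}$ and the parametrization $(\zeta,\eta)\mapsto x_{o}+\zeta+Q_{(a_{o},x_{o})}\eta$ is an affine bounded map into $U$, the composition $G$ is itself $C^{\ell}$. A direct computation gives $D_{\eta}G(a_{o},0,0) = D_{x}F(a_{o},x_{o})\circ Q_{(a_{o},x_{o})} = \operatorname{Id}_{Y}$. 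Moreover, \eqref{c_of_differential} together with $\|Q_{(a_{o},x_{o})}\|\leq\mathsf{C}$ yields
$$\|D_{\eta}G(a,\zeta,\eta) - \operatorname{Id}_{Y}\| \;\leq\; \|D_{x}F(a,x) - D_{x}F(a_{o},x_{o})\|\cdot\|Q_{(a_{o},x_{o})}\| \;\leq\; \tfrac{1}{2}$$
whenever $(a,x) = (a,x_{o}+\zeta+Q_{(a_{o},x_{o})}\eta)\in B_{\delta}(a_{o},A)\times B_{\delta}(x_{o},X)$. A Neumann series therefore shows $D_{\eta}G$ is uniformly invertible on that neighborhood with $\|(D_{\eta}G)^{-1}\|\leq 2$.

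Next, applying Theorem \ref{details_implicit_function_theorem} with $x_{1}=x_{o}$ (legal since $\|F(a_{o},x_{o})\|\leq \delta/(4\mathsf{C})$) produces a small $\eta_{o}\in Y$ with $G(a_{o},0,\eta_{o})=0$. The parametrized $C^{\ell}$ Banach implicit function theorem, applied at $(a_{o},0,\eta_{o})$, then yields some $\delta'\in(0,\delta)$ and a unique $C^{\ell}$ map $(a,\zeta)\mapsto f_{a}(\zeta)$ on $B_{\delta'}(a_{o},A)\times B_{\delta'}(0,\ker D_{x}F(a_{o},x_{o}))$ solving $G(a,\zeta,f_{a}(\zeta))=0$. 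By the uniqueness clause of Theorem \ref{details_implicit_function_theorem} this agrees with the $f_{a}$ already produced there, so the map $\xi\mapsto x_{o}+\xi+Q_{(a_{o},x_{o})}\circ f_{a}(\xi)$ is a $C^{\ell}$ chart whose image is exactly $F^{-1}(0)\cap(\{a\}\times B_{\delta'}(x_{o},X))$, as required.

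For the bound \eqref{eqn_impli_est_F}, I would implicitly differentiate $G(a,\zeta,f_{a}(\zeta))=0$ in $a$ to obtain
$$D_{a}f_{a}(\zeta) = -\bigl[D_{x}F(a,x)\circ Q_{(a_{o},x_{o})}\bigr]^{-1}\, D_{a}F(a,x), \qquad x = x_{o}+\zeta+Q_{(a_{o},x_{o})}f_{a}(\zeta).$$
The bracketed inverse is bounded by $2$ (Neumann, as above), while $\|D_{a}F(a,x)\|$ is controlled uniformly by $C_{1}+\|D^{2}_{ax}F\|\cdot(\delta' + \mathsf{C}\|f_{a}\|)$ on the closed neighborhood; this is precisely where $C_{1}$, $\mathsf{C}$, $\delta'$, $\|D^{2}_{ax}F\|$ and $\|f_{a}\|$ enter the final constant. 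Multiplying through by $\|Q_{(a_{o},x_{o})}\|\leq\mathsf{C}$ then gives the asserted bound on $\|D_{a}(x_{o}+\zeta+Q_{(a_{o},x_{o})}\circ f_{a}(\zeta))\|$. The argument is essentially routine; the only mildly non-routine point is the book-keeping of choosing $\delta'$ so that the validity range of \eqref{c_of_differential}, the hypothesis for the initial solve, and the $C^{\ell}$ IFT around $(a_{o},0,\eta_{o})$ all overlap, which is handled by successive shrinking.
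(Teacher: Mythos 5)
Your argument is correct, and it reaches the same conclusion as the paper by a genuinely (if mildly) different route. The paper follows the strategy of Theorem A.3.3 in [MS] and hand-builds the map $\phi_a(x)=x+Q_{(a_o,x_o)}\bigl(F(a,x)-D_xF(a_o,x_o)(x-x_o)\bigr)$ — a perturbation of the identity on $X$ — so that $f_a$ has the explicit formula $f_a(\zeta)=D_xF(a_o,x_o)\circ\phi_a^{-1}(\zeta+x_o)-D_xF(a_o,x_o)(x_o)$; the $C^\ell$ dependence on $a$ is then read off by differentiating this formula, using $\partial_a\phi_a^{-1}=-\phi_a^{-1}\circ(\partial_a\phi_a)\circ\phi_a^{-1}$. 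You instead change variables to $(\zeta,\eta)\in\ker D_xF(a_o,x_o)\times Y$ and set $G(a,\zeta,\eta)=F(a,x_o+\zeta+Q_{(a_o,x_o)}\eta)$, so that $D_\eta G$ is a uniform Neumann perturbation of the identity on $Y$; the classical $C^\ell$ Banach implicit function theorem then hands you $f_a$ directly, and the derivative bound comes from implicit differentiation $D_af_a=-(D_xF\circ Q)^{-1}D_aF$. The two proofs are closely related — $\phi_a$ is essentially the contraction the standard IFT would build internally — but yours is more modular (it invokes the IFT as a black box) while the paper's gives the closed formula for $f_a$ that the authors reuse later in the text (e.g.\ in the construction following the statement of Lemma \ref{smoothness of Glu^L}). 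One minor point you handle correctly but could state more explicitly: the invertibility needed at the base point is that of $D_\eta G(a_o,0,\eta_o)$ rather than $D_\eta G(a_o,0,0)$, and you rightly cover this by the uniform Neumann estimate on the whole $\delta$-neighborhood before shrinking to $\delta'$.
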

\begin{proof} Since $F(a,x)$ satisfies the assumption of Theorem \ref{details_implicit_function_theorem},  $F^{-1}(0)|_{\{a\}\times  B_{\delta_{1}}(x_{o},X)}$ is a smooth manifold. We only need consider the smoothness of $F^{-1}(0)$ with respect to $a.$

\v
  By the same argument in the proof of  Theorem A.3.3 in \cite{MS}, we have a explicit formula for $f_a$
$$f_a(\zeta)= D_{x}F(a_{o},x_{o})\circ \phi^{-1}_{a}(\zeta+x_{o}) -  D_{x}F(a_{o},x_{o})(x_{o}),$$
where $\phi_{a}$ is defined by
\begin{equation} \label{eqn_psi}
\phi_{a}(x):=x+ Q_{(a_{o},x_{o})}\left(F (a,x)-  D_{x}F{(a_{o},x_{o})}(x-x_{o})\right).\end{equation}
We choose $\delta'$ small such that in $B_{\delta'}(a_{o},A)\times B_{\delta'}(x_{o},X),$
\begin{equation} \label{eqn_psi-1}
| \phi_{a}(x)-I|\leq \frac{1}{2}. \end{equation}
 Then by the smoothness of $F$ and
 $$
 \frac{\p}{\p a}\phi^{-1}_{a}(x)=-\phi^{-1}_{a}\circ\frac{\p\phi_{a}}{\p a}\circ \phi^{-1}_{a}(x)
 $$
 we conclude that
$f_{a}$ is a smooth function of $(a,x).$
It follows that the zero set of $F$ is smooth for $a$ and
\eqref{eqn_impli_est_F} holds.
\end{proof}

\subsection{Exponential decay in tube}
By the same method as in \cite{LS-1}, we can prove the following lemmas
\begin{lemma}\label{tube_ker_c-1}
	Let $\eta\in \lka$ and $h +\hat h_{0}\in \cwk$ be a solution of $D^{\widetilde{\mathbf{L}}}|_{b}(h+\hat h_{0})=\eta$ over $\Sigma\setminus \Sigma(R_{0})$.
	Suppose that, for any $p,q\geq 0,$
	\begin{equation}\label{exp_decay_Ss}
	\left|\frac{\p^{p+q} E^{\widetilde{\mathbf{L}}}_{b_{(r)}} }{\partial s_{1}^ p \partial t_{1} ^q}\right|\leq
	C_{p,q}e^{-\fc |s_i|} ,\;\;\;\;\;\forall \; |s_i|\geq R_0,\;\; l=1,2
	\end{equation} for some constant $C_{p,q}>0.$
	Then  for any $0<\alpha< \frac{\fc}{2} $, there exists a constant $\mathsf{C} >0$ such that for any $R>\max\{R_{0},\bar d\}$ and  $R'>2+R$
	\begin{align}\label{t_ker_1}
	\left\|h \mid_{s_{1}\geq R' }\right\|_{k,2,\alpha} \leq \mathsf{C} \left( (e^{-(\fc-\alpha)(R'-  R)}  +  e^{-( \fc -\alpha) R} ) \left\|h+\hat h_{0}  \right\|_{\mathcal W,k,2,\alpha } + \left\|\eta\mid_{s_{1}\geq R }  \right\|_{k-1,2,\alpha }  \right)
	\end{align}
	In particular, if  $D^{\widetilde{\mathbf{L}}}|_{b}$ has a bounded right inverse $Q_{b}:\lka\to \wka$. Let $h=Q_{b}\eta$ be a solution of $D^{\widetilde{\mathbf{L}}}|_{b}(h) = \eta$ over $(R_0,\infty)\times S^1$.   Then there exists a constant $\mathsf{C}' >0$ independent of $r$ such that
	\begin{equation} \label{f_tube_estimate_curve}
	\left\|h\mid_{s_{1}\geq R'}\right\|_{k,2,\alpha} \leq \mathsf{C}' \left[\left({e^{-(\fc-\alpha)(R'-  R)}}+e^{-(\fc-\alpha) R }\right) \|\eta \|_{k-1,2,\alpha}+ \left\|\eta\mid_{  s_{1} \geq  R }  \right\|_{k-1,2,\alpha}\right].
	\end{equation}
\end{lemma}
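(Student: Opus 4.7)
The plan is to convert the equation on the cylindrical end into a perturbed $\bar\partial$-equation for $h$ alone, and invoke the standard exponential-decay machinery for $\bar\partial$ on a half-cylinder in the weighted Sobolev spaces. In the local trivialization $(\mathbf k\otimes\mathbf e_u)^p$ near the puncture, $D^{\widetilde{\mathbf L}}|_b=\bar\partial_{j_o}+E^{\widetilde{\mathbf L}}_b$, and since $\hat h_0=\varrho\zeta_0$ with $\varrho\equiv 1$ on $\{|s_1|\ge\bar d\}$, the hypothesis $D^{\widetilde{\mathbf L}}|_b(h+\hat h_0)=\eta$ becomes, on $\{|s_1|\ge\max(R_0,\bar d)\}$,
$$\bar\partial_{j_o}h=\eta-E^{\widetilde{\mathbf L}}_b h-E^{\widetilde{\mathbf L}}_b\hat h_0.$$
By \eqref{exp_decay_Ss} both $E^{\widetilde{\mathbf L}}_b h$ and $E^{\widetilde{\mathbf L}}_b\hat h_0$ carry an $e^{-\fc|s_1|}$ factor.

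First I would bound the inhomogeneous source in the weighted norm on $\{s_1\ge R\}$. Integrating against $e^{2\alpha s}$ and using $2\alpha<\fc$ gives
$$\|E^{\widetilde{\mathbf L}}_b\hat h_0\mid_{s_1\ge R}\|_{k-1,2,\alpha}\le Ce^{-(\fc-\alpha)R}|\zeta_0|\le Ce^{-(\fc-\alpha)R}\|h+\hat h_0\|_{\mathcal W,k,2,\alpha},$$
which accounts for the $e^{-(\fc-\alpha)R}$-summand in \eqref{t_ker_1}. The term $E^{\widetilde{\mathbf L}}_b h$ is a small perturbation: on $\{s_1\ge R\}$ we have $\sup|E^{\widetilde{\mathbf L}}_b|\le Ce^{-\fc R}$, so for $R$ large it can be absorbed on the left or handled by a Neumann-type iteration.

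Next I would perform a cut-off and Fourier-mode argument. Let $\chi$ be a smooth cut-off with $\chi\equiv 0$ on $\{s_1\le R\}$ and $\chi\equiv 1$ on $\{s_1\ge R+1\}$, and set $\tilde h=\chi h$. Then
$$\bar\partial_{j_o}\tilde h=\chi(\eta-E^{\widetilde{\mathbf L}}_b h-E^{\widetilde{\mathbf L}}_b\hat h_0)+(\bar\partial_{j_o}\chi)h,$$
and the cut-off error $(\bar\partial_{j_o}\chi)h$, supported in $\{R\le s_1\le R+1\}$, is controlled by the $\mathcal W$-norm of $h+\hat h_0$. Expanding $\tilde h(s,t)=\sum_n h_n(s)e^{2\pi int}$ in Fourier series, each mode satisfies $h_n'-2\pi n h_n=2\tilde g_n$; the kernel of $\bar\partial_{j_o}$ on the half-cylinder with $h(s,t)\to 0$ as $s\to\infty$ is trivial, and the explicit Green's function for each mode produces $|h_n(s)|\le Ce^{-\min(2\pi|n|,\fc)(s-R)}$ times the relevant source. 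Summing over $n$ and using elliptic regularity to upgrade from $L^2$ to $W^{k,2,\alpha}$ yields the $e^{-(\fc-\alpha)(R'-R)}$ factor.

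The main obstacle is the correct handling of the zero Fourier mode: the splitting $h+\hat h_0$ is designed precisely so that $\hat h_0$ absorbs the asymptotic constant value $\zeta_0$, whence $h$ itself vanishes at infinity and the Green's function representation for $n=0$ remains integrable against the source. The specialization \eqref{f_tube_estimate_curve} follows immediately: for $h=Q_b\eta$ one has $\|h+\hat h_0\|_{\mathcal W,k,2,\alpha}\le\|Q_b\|\,\|\eta\|_{k-1,2,\alpha}$, and substituting into \eqref{t_ker_1} gives the claim. All the technical details are carried out in \cite{LS-1} for the analogous estimate on $u^*TM$, and the argument transfers verbatim after replacing $u^*TM$ with $\widetilde{\mathbf L}|_b$ and $D$ with $D^{\widetilde{\mathbf L}}|_b$.
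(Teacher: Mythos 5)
The paper gives no proof of this lemma—it is prefaced only by ``By the same method as in \cite{LS-1}, we can prove the following lemmas''—so there is no in-paper argument to compare against. Your proposal is a correct and recognizable rendition of the standard half-cylinder exponential-decay argument that \cite{LS-1} and the cited references use: pass to the trivialization where $D^{\widetilde{\mathbf L}}|_b=\bar\partial_{j_o}+E^{\widetilde{\mathbf L}}_b$, absorb $\hat h_0$ using $\varrho\equiv 1$ on the end, bound the source $E^{\widetilde{\mathbf L}}_b\hat h_0$ in the weighted norm using $2\alpha<\fc$ to get the $e^{-(\fc-\alpha)R}|\zeta_0|$ term, treat $E^{\widetilde{\mathbf L}}_bh$ as a small perturbation, cut off and Fourier-decompose, and bootstrap with elliptic regularity.

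One small imprecision: it is not quite right to say ``the kernel of $\bar\partial_{j_o}$ on the half-cylinder with $h(s,t)\to 0$ as $s\to\infty$ is trivial.'' The modes $e^{2\pi n(s+\sqrt{-1}t)}$ with $n<0$ decay as $s\to\infty$ and lie in the kernel of $\bar\partial$; what makes the argument work is that their coefficients are pinned down by the boundary data on $\{R\le s_1\le R+1\}$ (the cut-off error $(\bar\partial\chi)h$), which is controlled by $\|h+\hat h_0\|_{\mathcal W,k,2,\alpha}$, and each such mode decays like $e^{-2\pi|n|(s-R)}$. This is exactly what produces the $e^{-(\fc-\alpha)(R'-R)}$ factor, and your final estimate is stated correctly; only the justifying sentence is off. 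The specialization \eqref{f_tube_estimate_curve} follows as you say, taking $\hat h_0=0$ since $Q_b$ lands in $\wka$ rather than $\cwk$.
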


\begin{lemma}\label{tube_ker}
Let  $h +\hat h_{0}\in \wkr$ be a solution of $D^{\widetilde{\mathbf L}}|_{b} (h+\hat h_{0})=0$ over $\Sigma_{(r)}\setminus \Sigma(R_{0})$.
Suppose that, for any $p,q\geq 0,$
\begin{equation}\label{exp_decay_Ss}
 \left|\frac{\p^{p+q} E^{\widetilde{\mathbf L}}_{b} }{\partial s_{1}^ p \partial t_{1} ^q}\right|\leq
C_{p,q}e^{-\fc \min(s_{1}, 2lr-s_{1})} ,\;\;\;\;\;\forall \; |s_i|\geq R_0,\;\; l=1,2
\end{equation} for some constant $C_{p,q}>0.$
Then  for any $0<\alpha< \frac{\fc}{2} $, there exists a constant $\mathsf{C} >0$ such that for any $R>\max\{R_{0},\bar d\}$ and  $R'>2+R$
 \begin{align}\label{t_ker_1}
  \left\|h \mid_{R'\leq s_{1}\leq 2lr-R' }\right\|_{k,2,\alpha} \leq \mathsf{C} (e^{-(\fc-\alpha)(R'-  R)}  +  e^{-( \fc -\alpha) R} ) \left\|h+\hat h_{0}  \right\|_{\mathcal W,k,2,\alpha }
  \end{align}
\end{lemma}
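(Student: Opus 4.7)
The plan is to adapt the proof of Lemma \ref{tube_ker_c-1} from the semi-infinite cylinder to the finite (glued) cylinder $\{0\leq s_1\leq 2lr\}$ inside $\Sigma_{(r)}\setminus\Sigma(R_0)$, exploiting the fact that the hypothesis on $E^{\widetilde{\mathbf L}}_b$ supplies exponential decay from \emph{both} ends of the tube rather than only one. The key point is that since $\alpha<\fc/2$, the weight $e^{\alpha|s_1|}$ is dominated by $e^{\fc\min(s_1,2lr-s_1)}$ in the middle of the cylinder, and the contraction/bootstrap that worked in Lemma \ref{tube_ker_c-1} still closes up.

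First I would trivialise $\widetilde{\mathbf L}|_b$ over the tube using the frame $(dz/z\otimes e)^p$ (with $z=e^{-s_1-2\pi\sqrt{-1}t_1}$), so that sections become $\mathbb C$-valued functions and the equation $D^{\widetilde{\mathbf L}}|_b(h+\hat h_0)=0$ reads
\begin{equation*}
\bar\partial_{j_o}h = -E^{\widetilde{\mathbf L}}_b(h+\hat h_0).
\end{equation*}
Then I would Fourier decompose in $t_1$, writing $h(s_1,t_1)=\sum_{n\in\mathbb Z}h_n(s_1)e^{2\pi\sqrt{-1}nt_1}$; the leading operator $\bar\partial_{j_o}$ diagonalises into the ODEs $\partial_{s_1}h_n-2\pi n h_n=(\text{perturbation})$, whose homogeneous solutions are $e^{2\pi n s_1}$. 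The zero mode $h_0$ is exactly what $\hat h_0$ absorbs at infinity; the non-zero modes decay at rate at least $e^{-2\pi|n|}\leq e^{-2\pi}$ per unit length. Because $\fc-\alpha<1<2\pi$, this spectral gap is comfortably larger than the claimed decay rate $\fc-\alpha$.

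Next, combining the mode-by-mode ODE estimate with elliptic regularity on a cylinder of unit length (to trade $L^2$ control for $W^{k,2}$ control), I would obtain the \emph{two-sided} homogeneous decay: for a function on $[R,2lr-R]\times S^1$ satisfying the unperturbed Cauchy–Riemann equation with vanishing zero mode,
\begin{equation*}
\|h|_{R'\leq s_1\leq 2lr-R'}\|_{k,2}\leq C\bigl(e^{-(\fc-\alpha)(R'-R)}+e^{-(\fc-\alpha)(R'-R)}\bigr)\|h\|_{k,2,[R,2lr-R]\times S^1},
\end{equation*}
exactly the analogue for the finite cylinder of the estimate from Lemma \ref{tube_ker_c-1}. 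The $\hat h_0$ contribution is an inhomogeneity of the form $E^{\widetilde{\mathbf L}}_b\hat h_0$ which, by the hypothesis on $E^{\widetilde{\mathbf L}}_b$, satisfies $|E^{\widetilde{\mathbf L}}_b\hat h_0|\leq C e^{-\fc\min(s_1,2lr-s_1)}|\hat h_0|$; inserting this bound into a Duhamel/Green's-function representation gives a contribution bounded by $Ce^{-(\fc-\alpha)R}|\hat h_0|$, which is precisely the second term in \eqref{t_ker_1}.

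Finally, to treat the genuine perturbation $E^{\widetilde{\mathbf L}}_b\cdot h$ I would set up a fixed-point argument in the weighted norm: rewrite the equation as $\bar\partial_{j_o} h=-E^{\widetilde{\mathbf L}}_b h - E^{\widetilde{\mathbf L}}_b\hat h_0$ and use that $\|e^{\alpha|s_1|}E^{\widetilde{\mathbf L}}_b\|_{L^\infty}\to 0$ uniformly on the relevant annulus (since $\alpha<\fc/2$), so the perturbation term can be absorbed into the left-hand side for $R$ large enough. The main obstacle, and the only place where the argument truly diverges from Lemma \ref{tube_ker_c-1}, is handling the coupling of the two ends: one must verify that Fourier modes $n>0$ (which decay away from $s_1=0$) and $n<0$ (which decay away from $s_1=2lr$) \emph{together} give the symmetric bound claimed, and that the inhomogeneous contribution from $\hat h_0$ does not produce a growing constant mode; the latter is controlled precisely because the $n=0$ mode of $E^{\widetilde{\mathbf L}}_b\hat h_0$ is integrable over $[0,2lr]$ with norm of order $e^{-\fc R}|\hat h_0|$, which matches the right-hand side of \eqref{t_ker_1}.
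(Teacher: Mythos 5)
The paper provides no proof of this lemma --- it merely states that Lemma \ref{tube_ker}, like Lemma \ref{tube_ker_c-1}, follows ``by the same method as in \cite{LS-1},'' an external reference. So there is no in-paper proof to compare yours against. That said, your reconstruction is the standard and expected argument: trivialize $\widetilde{\mathbf L}|_b$ on the neck, Fourier-decompose in $t_1$, use the spectral gap of $\bar\partial$ to get exponential decay of the nonzero modes from both ends of the finite tube, isolate the constant mode in $\hat h_0$, estimate the inhomogeneous contribution $E^{\widetilde{\mathbf L}}_b\hat h_0$ by a Green's-function/Duhamel bound, and absorb the lower-order term $E^{\widetilde{\mathbf L}}_bh$ by a contraction once $R$ is large enough. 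This is exactly the cylinder-decay template appearing in \cite{D} and \cite{MS}, both cited by the paper, and is almost certainly what \cite{LS-1} uses; your adaptation to the finite tube (two-sided decay, both ends controlled) is the correct modification.

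Two inconsistencies to tidy up before writing this out in full. First, your intermediate display repeats $e^{-(\fc-\alpha)(R'-R)}$ twice; for the finite cylinder the homogeneous step should produce two distinct contributions, one from each end, and in fact the purely homogeneous decay rate is $2\pi-\alpha$ (the spectral gap), which is much faster than the claimed $\fc-\alpha$. The $\fc-\alpha$ rate and the $e^{-(\fc-\alpha)R}$ term really come from the inhomogeneity $E^{\widetilde{\mathbf L}}_b\hat h_0$, which only decays at the slower rate $\fc$; you should keep these two sources separate rather than folding them into one homogeneous estimate. Second, with the paper's convention $z_1=e^{-s_1-2\pi\sqrt{-1}t_1}$, a holomorphic section on the neck has Fourier expansion $\sum_n c_n e^{-ns_1-2\pi\sqrt{-1}nt_1}$, so the $n$-th homogeneous solution is $e^{-ns_1}$, whereas you displayed the ODE solutions as $e^{2\pi ns_1}$. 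Your later verbal description (positive modes decay away from $s_1=0$) agrees with the paper's convention but contradicts your own displayed ODE; pick one sign convention and carry it through, since which boundary slice controls which sign of $n$ determines how the Green's operator for the mixed two-point problem on $[R, 2lr-R]$ is assembled, and a sign slip here can silently produce a growing rather than decaying bound.
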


\end{document}